\DeclareMathOperator*{\esssup}{ess\,sup}
\renewcommand{\d}{\mathrm{d}}
\newcommand{\md}{\partial^\bullet}
\newtheorem{theorem}{Theorem}[section]
\newtheorem{lem}[theorem]{Lemma}
\newtheorem{prop}[theorem]{Proposition}
\newtheorem{remark}[theorem]{Remark}
\newcommand{\R}{\mathbb{R}}
\renewcommand{\bar}{\overline}
\newcommand{\norm}[1]{{\left\Vert #1
		\right\Vert}}
\newcommand*{\T}{%
  {\mathpalette\@T{}}%
}
\newcommand*{\@T}[2]{%
  \raisebox{\depth}{$\m@th#1\intercal$}%
}
\newcommand{\W}{\mathbb W} 
\newcommand{\N}{\mathbb N}
\theoremstyle{definition}
  \numberwithin{figure}{section}
\numberwithin{table}{section}
\numberwithin{equation}{section}
\numberwithin{theorem}{section}
\title{Regularisation and separation for evolving surface Cahn-Hilliard equations}
\author{Diogo Caetano\thanks{Mathematics Institute, University of Warwick, Coventry, CV4 7AL, UK (\tt{D.Caetano@warwick.ac.uk}, \, \tt{C.M.Elliott@warwick.ac.uk}.)}
\and Charles M. Elliott\,\footnotemark[1]
\\ 
\and Andrea Poiatti\footnotemark[2]}
\author{Diogo Caetano\thanks{Mathematics Institute, University of Warwick, Coventry CV4 7AL, United Kingdom ({\tt Diogo.Caetano@warwick.ac.uk}, {\tt C.M.Elliott@warwick.ac.uk})}  \and 
Charles M. Elliott\footnotemark[1] \and
Maurizio Grasselli\thanks{Dipartimento di Matematica, Politecnico di Milano, Milano, 20133, Italy (\tt{mauri\-zio.grasselli@polimi.it}, \tt{andrea.poiatti@polimi.it})} \and 
Andrea Poiatti\footnotemark[2]}
\renewcommand{\d}{\mathrm d}
\newcommand{\gint}{\int_{\Gamma(t)}}
\newcommand{\tgrad}{\nabla_{\Gamma}}
\newcommand{\tzerograd}{\nabla_{\Gamma_0}}
\newcommand{\invlaprho}{(\Delta_{\Gamma, \rho}^{-1})}
\begin{document}

\hypersetup{
  urlcolor     = blue, 
  linkcolor    = Bittersweet, 
  citecolor   = Cerulean
}

\maketitle

\begin{abstract}
  We consider the Cahn-Hilliard equation with constant mobility and
logarithmic potential on a two-dimensional evolving closed surface
embedded in $\R^3$, as well as a related weighted model. The
well-posedness of weak solutions for the corresponding initial value
problems on a given time interval $[0,T]$ have already been
established by the first two authors. Here we first prove some
regularisation properties of weak solutions in finite time. Then, we
show the validity of the strict separation property for both the
problems. This means that the solutions stay uniformly away from the
pure phases $\pm1$ from any positive time on. This property plays an
essential role to achieve higher-order regularity for the solutions.
Also, it is a rigorous validation of the standard double-well
approximation. The present results are a twofold extension of the
well-known ones for the classical equation in planar domains.
\end{abstract}

\section{Introduction}

The Cahn-Hilliard equation in an evolving surface setting has recently been studied in \cite{CaeEll21}. More precisely, having fixed $T>0$, considering a family of closed, connected, oriented surfaces $\Gamma(t) \subset \R^3$ such that its evolution is given a priori as a flow determined by the (sufficiently smooth) velocity field $\mathbf V$,
the evolving surface Cahn-Hilliard equation reads
\begin{equation}\label{intro:model1}
\begin{cases}
\dot u + u \nabla_\Gamma \cdot \mathbf V - \nabla_\Gamma \cdot( u \mathbf V_a^\tau) + \Delta_\Gamma^2 u - \Delta_\Gamma F'(u) = 0, \quad \text{ in } \,\,  \mathcal G_T,\\
u(0)=u_0, \quad \text{ in } \,\, \Gamma(0),
\end{cases}
\end{equation}
where $\mathcal G_T := \bigcup_{t\in [0,T]} \{t\}\times \Gamma(t)$. Here $\mathbf V_a^\tau$ corresponds to the difference between the tangential component of $\mathbf V$ and an advective velocity $\textbf{V}_a$ on the surface. {The quantity $u$ can be interpreted as the difference between the concentration of two immiscible substances which are present on the surface.}
In the same contribution, the following related weighted Cahn-Hilliard system has also been analysed
\begin{align}\label{intro:model2}
\begin{cases}
\dot \rho +\rho \nabla_\Gamma\cdot \mathbf V = 0, \quad \text{ in } \,\,  \mathcal G_T, \\
\rho\dot c - \nabla_\Gamma \cdot \left(\rho\nabla_\Gamma\left( -\dfrac{1}{\rho} \Delta_\Gamma c + F'(c) \right) \right)  &= 0, \quad \text{ in } \,\,  \mathcal G_T,\\
c(0)=c_0,\,\,\rho(0)\equiv 1, \quad \text{ in } \,\, \Gamma(0),
\end{cases}
\end{align}
where $\rho$ is a suitable weight function transported by the surface evolution (in particular, it can be interpreted as the total mass density) {\color{black}and $c$ can be seen as the relative (i.e., dimensionless) concentration difference between the two substances on the surface}.
 In the equations above, equipped with suitable initial conditions, $\tgrad$ denotes the tangential gradient on the surface $\Gamma(\cdot)$, $\Delta_\Gamma$ is the Laplace-Beltrami operator and $\dot u$ denotes the material time derivative of $u$ (see Section \ref{additional} for more details). The functional framework in this work is the same as in \cite{CaeEll21}.
 {\color{black}Model \eqref{intro:model2} is a simplified version of the one presented in \cite{Sauer,ZimTosLan19}, which also includes a coupling with an equation for the surface deformation, i.e., the Kirchhoff-Love thin shell equation. We decided to concentrate on the single Cahn-Hilliard equation in the same formulation arising from the model in \cite{ZimTosLan19}, so that this analysis, although being interesting \textit{per se}, could also be exploited (and extended) to take the evolution of the surface into account as in \cite{Sauer,ZimTosLan19}; in our case, this evolution is given \textit{a priori}. In both models, the main physical property is the conservation of total mass. In particular (see also Remark \ref{conservation} for more details), in the case of model \eqref{intro:model1} this implies, $u$ denoting the difference density, that we have 
 \begin{align}
 \int_{\Gamma(t)}u\equiv \int_{\Gamma_0} u_0,
 \label{mass1}
 \end{align}
 whereas, for model \eqref{intro:model2}, it holds
 \begin{align}
 \int_{\Gamma(t)}\rho c\equiv \int_{\Gamma_0}c_0,
 \label{mass2}
 \end{align}
 $\rho c$ being this time the difference density between the two substances and having assumed $\rho(0)\equiv 1$.
} 

This work aims at proving that weak solutions to equations \eqref{intro:model1} and \eqref{intro:model2} regularise in finite time and enjoy the instantaneous strict separation property from the pure phases, i.e., for each $\tau > 0$, there exists $\delta>0$ depending on $\tau,\ T$ and the data such that 
\begin{align}\label{intro:sep}
    \|u(t)\|_{L^\infty(\Gamma(t))} \leq 1 - \delta, \quad \text{ for a.a. } t\in [\tau,T].
\end{align}
The present results answer to some regularity issues left open (see \cite[5.1.3]{CaeEll21}). 

The classical Cahn-Hilliard equation was introduced in \cite{CahHil58} (see also \cite{Cah61}) for the study of spinodal decomposition in binary alloys. More precisely, it models the phenomenon of phase separation of an alloy of two components when the temperature of the system is quenched to a critical temperature, resulting on a spatially separated two-phase structure. On a smooth bounded domain $\Omega\subset \R^d$, where $d=1,2,3$, it reads as 
\begin{align}\label{intro:ch}
\begin{cases}
    \dot u =\Delta w, \quad \text{ in } \Omega \times (0,T),\\
    w=-\Delta u + F'(u), \quad \text{ in } \Omega \times (0,T),
    \end{cases}
\end{align}
where $w$ is the so called chemical potential
and is complemented with the (no-flux) boundary and initial conditions
\begin{align*}
    \partial_n u = \partial_n w = 0 \,\, \text{ on } \partial \Omega \times (0,T) \quad \text{ and } \quad u(0) = u_0  \,\, \text{ in } \Omega.
\end{align*}
Note that we have assumed constant mobility and set it equal to $1$.
 It has since then inspired numerous works in other areas of science; to name a few, versions of the Cahn-Hilliard equation have been used to study population dynamics \cite{CohMur81}, tumour growth models \cite{KhaSan08} and they have been exploited in image processing analysis \cite{BerEseGil07b,BerEseGil07a} (see also the recent book \cite{Mir19} for other examples). It is also worth recalling that phase separation has recently become a paradigm in cell biology (see, for instance, \cite{Dol18, dolgin2022shape}).
 {\color{black}For example, Cahn-Hilliard equations have been
used to model solid tumour growth (see, e.g., \cite{chatelain2011morphological,lowengrub1,xu2016mathematical} and the book \cite[Part I, Chap. 5]{cristini2010multiscale}), dynamics of plasma membranes
and multicomponent vesicles \cite{Garcke1,Garcke2,Webb,case2022membranes,lowengrub2, Mercker,snead2019control,xue2022phase}.
In some of these cases, such as sorting in biological membranes, phase separation and
coarsening take place in a thin, evolving layer of self-organising molecules, which in
continuum-based approach can be modelled as a material surface. This justifies the
recent interest in the Cahn-Hilliard equation posed on evolving surfaces.
In particular, one of the most interesting phenomenon which has been modelled by evolving-surface Cahn-Hilliard equations (see, e.g., \cite{Embar,ZimTosLan19}) is the lipid rafts formation on cell membranes, which are composed of lipids,
proteins, and cholesterol. Whereas proteins mediate traffic
and serve as signalling devices, lipids provide a fluid matrix
within which transmembrane proteins are free to move. The separation of lipids into two immiscible liquid phases is often linked to the formation of rafts in cell membranes. These rafts are heterogeneous, highly dynamic, sterol, and
sphingolipid-enriched domains that compartmentalise cellular processes and they are thought to be in the liquid-ordered
phase. Rafts are believed to play an important
role in regulating protein activity (\cite{Hess}) that
may in turn affect biological processes such as trafficking
and signalling and are
known to be central to the replication of viruses (\cite{Ono}). All of the above phenomena involve elastic bending of cell membranes being fully coupled with the irreversible
processes of lipid flow, the diffusion of lipids and proteins, and the surface binding of proteins.
Comprehensive membrane models which include these effects are needed to fully understand the complex physical behavior of biological membranes. In our work we make a first stride in the direction of the analysis of these highly complex models. In particular, as already noticed, taking inspiration from such models (see, e.g., \cite{Sauer, ZimTosLan19}), we do not consider the elasticity of the surface, that would determine an evolution equation for the surface itself, which we assume to be given, but we only study the Cahn-Hilliard equation arising from such models. The natural direction for future work is then to consider the fully coupled system, where the evolution of the surface is itself part of problem, see for instance \cite{AbeBurGar22b,AbeBurGar22a, Sauer,ZimTosLan19}.
}
 In equation \eqref{intro:ch}, $u$ is again to be thought of as the difference between the concentrations of the two components in the mixture. The function $F$ is the homogeneous free energy (potential) of the system, and is defined as follows
\begin{align}
    F(r) = \dfrac{\theta}{2} \left( (1+r)\ln(1+r) + (1-r)\ln(1-r) \right) - \dfrac{\theta_0}{2} r^2, \quad r\in [-1,1],
    \label{sing}
\end{align}
where $\theta$, $\theta_0$ are absolute temperatures and satisfy $0<\theta<\theta_0$. This ensures that $F$ has a double-well shape. From the modelling point of view, the logarithmic terms are related to the entropy of the system, while the quadratic term accounts for demixing effects. It is the competition between the two terms that gives rise to the spatially distributed phase separation. The potential defined in this way is called \textit{singular}, whereas many authors  considered a proper approximation, which avoids the fact that $F^\prime$ is unbounded at the pure phases $\pm1$. The most common choice is a polynomial of fourth degree, typically of the form $F(r)=\frac{1}{4}(r^2-u_\theta^2)^2$, where $u_\theta>0$ and $-u_\theta<0$ are the minima of \eqref{sing}; this guarantees that $F$ has a symmetric double-well shape also with minima at $\pm u_\theta$. This is usually referred as a regular potential. This case was also taken into account in \cite{CaeEll21}. However, the polynomial approximation does not ensure the existence of physical solutions, that is, solutions whose values are in $[-1,1]$,
due to the lack of comparison principles for the Cahn-Hilliard equation. We refer to the original proof of well-posedness with \eqref{sing} in \cite{EllLuc91}, as well as the survey articles \cite{Ell89, NovSeg84} and the recent book \cite{Mir19} for an overview of the mathematical results for \eqref{intro:ch}.

As far as the evolving surface version is concerned, both systems \eqref{intro:model1} and \eqref{intro:model2} are treated in \cite{CaeEll21}, where the authors establish existence, uniqueness and stability of solutions for the different cases where $F$ is a smooth, logarithmic or double obstacle potential, respectively. Some regularity results are also proved. We refer also to \cite{EllRan15, OlsXuYus21,YusQuaOls20} for different derivations of the equation and some numerical results, and to \cite{ZimTosLan19} for a rigorous modelling source for the weighted system \eqref{intro:model2}. {\color{black} Nevertheless, for the sake of completeness we will give a short derivation of each model in Sections \ref{derivation1} and \ref{derivation2}, respectively.} Interest in these equations is part of the more general problem of considering partial differential equations on domains or surfaces that evolve in time which is presently being vastly studied, since these have been seen to provide more realistic models for physical and biological phenomena. Some examples are \cite{BarEllMad11, EilEll08, EllStiVen12, ErlAziKar01, GarLamSti14, VenSekGaf11}. Not only are these relevant for applications, but they also raise interesting modelling, numerical and computational questions, as well as challenging problems from the point of view of mathematical analysis. We refer in particular to \cite{EllRan21} for a detailed exposition of the numerical analysis of such problems and to 
\cite{AlpCaeDjuEll21, AlpEllSti15a, AlpEllSti15b} for an abstract functional framework well-suited for the treatment of such problems. 

The importance of establishing the strict separation property is twofold. 
\begin{itemize}
    \item First it is essential when one considers higher-order regularity of weak solutions, due to the behaviour of $F$ and its derivatives close to $\pm 1$. Indeed, note that 
\begin{align*}
    F'(r) = \dfrac{\theta}{2} \ln \dfrac{1+r}{1-r} - \theta_0 r \quad \text{ and } \quad F''(r) = \dfrac{\theta}{2} \left( \dfrac{1}{1+r} + \dfrac{1}{1-r}\right) - \theta_0
\end{align*}
are both singular when $r\to \pm 1$, and even though the structure of $F'$ can be exploited in order to obtain some estimates, it is more challenging to do so for $F''$. As noted in, e.g., \cite{GioGraMir17}, we have 
\begin{align}\label{intro:growth}
    F''(r) \leq C e^{C |F'(r)|},
\end{align}
which precludes us from controlling $F''$ in $L^p$-spaces in terms of the $L^p$-norms of $F'$. It is then important to study conditions that ensure the integrability of $F''$. It is clear that establishing strict separation from the pure phases as in \eqref{intro:sep} is crucial to achieve this. Furthermore, the strict separation property is a fundamental ingredient in the study of longtime behavior of solutions (see \cite{GioGraMir17,MirZel04}).
\item Secondly, if the strict separation property holds then, being the solution away from $\pm 1$, the logarithmic potential $F$ is smooth and can be dominated by a polynomial. Therefore, this result can be viewed as a rigorous justification of the usual aforementioned polynomial double-well approximation.     
\end{itemize}   

Separation from the pure phases for dimensions $d\geq 3$ is unknown even in the fixed domain setting, in the case of constant mobility. This apparently technical restriction is related to the growth condition \eqref{intro:growth} (see \cite{GalGioGra22} for a detailed analysis). As a consequence, our results are restricted to two-dimensional surfaces. The strict separation for $d=2$ was first established for \eqref{intro:model1} in \cite{MirZel04}. Then, a more general argument was introduced in \cite{GioGraMir17}. For an up-to-date picture of the state-of-the-art, the reader is referred to \cite{GalGioGra22}, where new proofs and further generalizations are given.

Here, not only we extend the result for two-dimensional planar domains for equation \eqref{intro:model1}, but we also prove the first separation result for the weighted model \eqref{intro:model2}. 

This article on one hand complements \cite{CaeEll21} by proving instantaneous regularisation of the weak solutions and on the other hand extends the validity of the strict separation property for the local Cahn-Hilliard equation with constant mobility to the setting of evolving surfaces in $\R^3$ in two cases. It is also worth observing that our approach to the estimates for solutions to the approximate problems allows us to forgo some of the assumptions made in \cite{CaeEll21} (see, e.g., \cite[Assumption $A_P$]{CaeEll21}), generalising the results therein. For the sake of completeness, here we also include the proof of continuous dependence on the initial data, entailing uniqueness for the general system \eqref{intro:model2}, which was omitted in \cite{CaeEll21}. 

The paper is structured as follows. In Section \ref{additional} we briefly recall the setting from \cite{CaeEll21} on the evolution of the surfaces and state some additional regularity assumptions. Section \ref{firstmodel} is devoted to the analysis of the first system \eqref{intro:model1} and we establish higher-order regularity for the solution $u$ and prove that it satisfies the strict separation property. Finally, in Section \ref{secondmodel} we introduce a better suited Galerkin approximation for the alternative weighted model \eqref{intro:model2} and prove analogous regularity results as well as the strict separation property. We also include three appendices for the sake of completeness: in Appendix \ref{app:geom} we present the proofs of some propositions which are only stated in the main body of the paper. Appendix \ref{app:prelimtools} collects some preliminary results essential to obtain the control of higher-order derivatives of the logarithmic potential (in particular, we report a Moser-Trudinger type inequality valid on compact Riemannian manifolds and a generalized Young's inequality), whereas in Appendix \ref{app:embedding} we show the validity of a well-known embedding result for Bochner spaces also in the evolving space setting. 

\section{Surface motion: assumptions}
\label{additional}
We refer to the setting and the notation of \cite{DziEll13-a}. To be more precise, we consider $T>0$ and a $C^2$-evolving surface $\{\Gamma(t)\}_{t\in [0,T]}$ in $\R^3$, i.e. a closed, connected, orientable $C^2$-surface $\Gamma_0$ in $\R^3$ together with a smooth \textit{flow map} 
\begin{align*}
\Phi\colon [0,T]\times \Gamma_0 \to \R^3
\end{align*}
such that 
\begin{itemize}
\item[(i)] denoting $\Gamma(t) := \Phi_t^0(\Gamma_0)$, the map $$\Phi_t^0 := \Phi(t, \cdot)\colon \Gamma_0\to \Gamma(t)$$ is a $C^2$-diffeomorphism, with inverse map $$ \Phi_0^t \colon \Gamma(t)\to \Gamma_0;$$
\item[(ii)] $\Phi_0^0 = \text{id}_{\Gamma_0}$. 
\end{itemize}
%
It follows from the definition above that, for each $t\in [0,T]$, $\Gamma(t)$ is also a closed, connected, orientable $C^2$-surface.
In addition to the assumptions on the surface motion, we {\color{black} first recall the same hypothesis as \cite[$\mathbf{A}_\Phi$]{CaeEll21}:

\vskip4mm
\textbf{Assumption $\mathbf{A}_\Phi$}: There exists a velocity field $\textbf{V}:[0, T ] \times \R^3 \to \R ^3$ with regularity
$$
\textbf{V}\in C^0([0,T];C^2(\R^3;\R^3)),
$$
such that, for any $t\in [0,T]$ and every $x\in \Gamma_0$, 
\begin{align}
\dfrac{d}{dt} \, \Phi_t^0(x) &= \mathbf{V}\left(t, \Phi_t^0(x)\right), \quad \text{ in } [0,T] \\
\Phi_0^0(x) &= x.
\end{align}

In addition to $\mathbf{A}_\Phi$ we will also suppose, where necessary, that the velocity field $\mathbf{V}\colon [0, T]\times \R^3\to \R^3$ satisfies

\vskip4mm
\textbf{Assumptions $\textbf{B}_\Phi$:}
\label{add}
\begin{enumerate}
	
	\item  $\mathbf{V}$ is such that
    \begin{align}
	\mathbf{V}\in C^1([0,T];C^2(\R^3,\R^3));
	\label{V}
	\end{align}
	\item The advective tangential velocity $\mathbf{V}_a$ is such that
     \begin{align}
     \label{VA}
     \mathbf{V}_a\in C^1([0,T];C^1(\R^3,\R^3)).
     \end{align}
	 \end{enumerate}
	 
	Note that the above assumptions $\textbf{B}_\Phi$ require more regularity in time for the map $\Phi_0^{(\cdot)}$ than \textbf{A}$_{\Phi}$.} Denoting by $\mathbf V_\tau$ and $\mathbf V_\nu$ the tangential and normal components of $\mathbf V$, respectively, {\color{black} Assumptions $\textbf{B}_\Phi$} imply, in particular, for $t\in[0,T],$
		\begin{align}
	&\Vert \mathbf{V}_\tau(t)\Vert_{C^2(\Gamma(t))},\Vert \mathbf{V}_\nu(t)\Vert_{C^2(\Gamma(t))}\leq \Vert \mathbf{V}(t)\Vert_{C^2(\Gamma(t))}\leq C_\mathbf{V},\nonumber\\
    &\Vert \mathbf{V}_a(t)\Vert_{C^0(\Gamma(t))}, \Vert \partial^\bullet\mathbf{V}_a^\tau(t)\Vert_{C^0(\Gamma(t))}\leq C_\mathbf{V},
    \label{es1}
    \end{align}
    for $t\in [0,T]$ and for some $C_\mathbf{V}>0$ independent of time, where (see \cite{CaeEll21}) $\mathbf{V}_a^\tau:=\mathbf{V}_\tau-\mathbf{V}_a$.
    {\color{black}whereas the single extra Assumption $\textbf{B}_\Phi.2.$ implies 
    \begin{align}
	\Vert \partial^\bullet\mathbf{V}_a(t)\Vert_{C^0(\Gamma(t))}\leq C_\mathbf{V},
	\label{es22}
	\end{align}
	 for $t\in [0,T]$ and for some $C_\mathbf{V}>0$ independent of time.}
	  
	  In this setting we can define the normal material time derivative of a scalar quantity $u$ on $\Gamma(t)$ by
	 \begin{align*}
	     \partial^\circ u := {\color{black}\hat {u}}_t + \nabla {\color{black}\hat{u}} \cdot \mathbf V_\nu,
	 \end{align*}
	 where ${\color{black}\hat {u}}$ denotes any extension of $u$ to a (space-time) neighbourhood of $\Gamma(t)$, and its full material time derivative, or strong time derivative, as
\begin{align}\label{eq:timederiv}
\partial^\bullet u := \partial^\circ u + \nabla_\Gamma u \cdot \mathbf V_\tau = {\color{black}\hat {u}}_t + \nabla {\color{black}\hat {u}} \cdot \mathbf{V},
\end{align}
 These definitions take into account not only the evolution of the quantity $u$ but also the movement of points in the surface. The definition in \eqref{eq:timederiv} can be abstracted to a weaker sense as follows. Let $u\in L^2_{H^1}$. A functional $v\in L^2_{H^{-1}}$ is said to be the weak time derivative of $u$, and we write $v=\partial^\bullet u$, if, for any $\eta\in \mathcal{D}_{H^1}(0,T)$, we have
\begin{align}
\int_0^T \langle v(t), \eta(t)\rangle_{H^{-1}\times H^1}  = -\int_0^T (u(t),\md\eta(t))_{L^2} - \int_0^T \int_{\Gamma(t)} u(t)\eta(t) \nabla_{\Gamma(t)}\cdot \mathbf{V}(t).
\end{align}
Observe that $\md \eta$ is the strong material derivative of $\eta$. We will use $\md u$ for both the strong and weak material derivatives.

	 Thanks to \cite[Lemma 2.6]{DziKroMul13}, we  observe that, for a sufficiently regular vector field $\mathbf{f}$, there holds
	 $$
	 \partial^\bullet (\tgrad \cdot \mathbf{f})=\partial^\bullet \underline{D}^{\Gamma(t)}_l\mathbf{f}_l=\underline{D}^{\Gamma(t)}_l\partial^\bullet\mathbf{f}_l-A_{lr}(\mathbf{V})\underline{D}^{\Gamma(t)}_r\mathbf{f}_l,\quad l,r=1,2,3.
	 $$
	 Here $\underline{D}^{\Gamma(t)}_i$, $i=1,2,3$, is the $i^{th}$ component of the tangential gradient and
	 $$
	 A_{lr}(\mathbf{V})=\underline{D}^{\Gamma(t)}_l\mathbf{V}_r-\nu_s\nu_l\underline{D}^{\Gamma(t)}_r\mathbf{V}_s,\quad s=1,2,3,
	 $$
	 with $\nu$ standing for the normal vector field to the surface.
	 Therefore,  by \eqref{V} we can set $\mathbf{f}=\mathbf{V}$ in the previous equality and deduce that, due to the regularity of $\Gamma(t)$, we also have
	 		\begin{align}
	 \Vert \partial^\bullet\tgrad \cdot \mathbf{V}(t)\Vert_{C^0(\Gamma(t))}\leq C_\mathbf{V}, \quad \text{ for any }t\in[0,T].
	 \label{es2}
	 \end{align}
	  Moreover, denoting by $J_t^0$ and $J_0^t$, the change of area element from $\Gamma_0$ to $\Gamma(t)$ and the one from $\Gamma(t)$ to $\Gamma_0$, respectively, we have, for any $\eta:\Gamma(t)\to \R$,
	 $$
	 \int_{\Gamma(t)}\eta d\Gamma=\int_{\Gamma_0}\widetilde{\eta}J_t^0 d\Gamma_0,
	 $$
	 with $\widetilde{\eta}(p)=\eta(\Phi_t^0(p)), \text{ for any }p\in \Gamma_0$. Note that {\color{black}$J_t^0(\cdot)=\left\vert\text{det} D_{\Gamma_0}\Phi_t^0(\cdot)\right\vert$}. 
	 Then, due to the previous assumptions, we deduce
	 \begin{align}
	 \frac{1}{C_J}\leq \Vert J_t^0\Vert_{C^0(\Gamma(t))}\leq C_J, \quad\text{ for any }t\in[0,T],
	 \label{equiv}
	 \end{align}
	 where $C_J$ can be chosen to be independent of $t$. This also implies that there exists a constant $C_\Gamma>0$, independent of $t$, such that
	 \begin{align}\label{eq:surfacerelation}
	 \frac{\vert\Gamma_0\vert}{C_\Gamma}\leq \vert\Gamma(t)\vert\leq C_\Gamma\vert\Gamma_0\vert,
	 \end{align}
	 for any $t\in[0,T]$. Here $\vert \Gamma_0\vert$ stands for the Lebesgue surface measure. For any integrable function $v$ over a surface $\Gamma$ of positive measure, we set
$$
(v)_{\Gamma}=\dfrac{1}{\vert\Gamma\vert}\int_{\Gamma}v.
$$

\textbf{General agreement.} The symbol $C>0$ will denote a generic constant, depending only on the structural parameters of the problem and $T$, but independent of time $t$ and of the approximating indices $\delta,M$ (unless otherwise
specified).

\section{The first model}
\label{firstmodel}
	In this section we consider system \eqref{intro:model1}, which is a model proposed, e.g., in \cite{OcoSti16} and \cite[Problem 4.1]{CaeEll21}. We start by briefly recalling its derivation.
\subsection{Derivation}
\label{derivation1}{\color{black}
Fix $t\in [0,T]$ and consider a scalar quantity $u=u(t)\colon \Gamma(t) \to \R$, to be thought of, in our context, as the concentration difference between two immiscible substances present in a mixture on the surface. The main property for $u$ is that its total mass is conserved, so that \eqref{mass1} holds. Starting from the balance law 
\begin{align*}
    \dfrac{d}{dt} \int_{P(t)} u = -\int_{\partial P(t)} q \cdot \mu,
\end{align*}
on every portion $P(t)\subset \Gamma(t)$ evolving under the purely normal velocity $\mathbf V_\nu$, where $q$ is a flux to be defined later on, we are led to
\begin{align*}
    \int_{P(t)} \partial^\circ u + u \, \text{div}_\Gamma\mathbf V_\nu + \text{div}_\Gamma q = 0.
\end{align*}
Since this holds for every region we obtain the equation
\begin{align*}
    \partial^\circ u + u \, \text{div}_\Gamma\mathbf V_\nu + \text{div}_\Gamma q = 0 \quad \text{ on } \Gamma(t).
\end{align*}
Now, the flux $q$ consists of an advective term $q_a = u \mathbf V_a$, where $\mathbf V_a$ is tangential and represents the particle velocity in the fluid, and a diffusive part $q_d = -\tgrad( -\Delta_\Gamma u + F'(u) )$. Substituting in the equation above leads to
\begin{align}\label{eq:CH_model1_normal}
    \partial^\circ u + u \, \text{div}_\Gamma \mathbf V_\nu + \text{div}_\Gamma (u\mathbf V_a)  = \Delta_\Gamma \left(  -\Delta_\Gamma u + F'(u)\right).
\end{align}
This is the \textit{physical} equation that we wish to consider. In order to apply the functional framework developed in \cite{AlpCaeDjuEll21}, we work under Assumption $\mathbf{A_\Phi}$. To incorporate the tangential component $\mathbf V_\tau$, which is to be interpreted as an \textit{arbitrary parametrisation} of the surface $\Gamma(t)$, we add and subtract a term $\text{div}_\Gamma (u \mathbf V_\tau)$ to \eqref{eq:CH_model1_normal}, resulting in
\begin{align}\label{eq:CH_model1}
    \partial^\bullet u + u \, \text{div}_\Gamma \mathbf V + \text{div}_\Gamma (u(\mathbf V_a-\mathbf V_\tau))  = \Delta_\Gamma \left(  -\Delta_\Gamma u + F'(u)\right),
\end{align}
or equivalently
\begin{align*}
    \partial^\bullet u + u \, \text{div}_\Gamma \mathbf V + \text{div}_\Gamma (u(\mathbf V_a-\mathbf V_\tau))  &= \Delta_\Gamma w, \\
    -\Delta_\Gamma u + F'(u) &= w.
\end{align*}}
\subsection{Weak formulation}
\label{weakk}
In order to introduce the variational formulation of the Cahn-Hilliard systems we need to introduce the abstract framework of \cite{AlpCaeDjuEll21, AlpEllSti15a} for the definition of the time-dependent function spaces. We do it in a summarised way and refer the reader to \cite{AlpCaeDjuEll21, AlpEllSti15a} for a more rigorous and detailed explanation of how these are constructed and how they can be abstracted to a more generalised setting.

We use the flow map $\Phi_t^0$ to define pullback and pushforward operators
\begin{align*}
\phi_{-t} u = u \circ \Phi_t^0 \colon \Gamma_0 \to \R \,\, &\text{ for } u\colon \Gamma(t)\to \R, \\
\phi_{t} v = v \circ \Phi_0^t \colon \Gamma(t) \to \R \,\, &\text{ for } v\colon \Gamma_0\to \R
\end{align*}
and suppose, for $t\in [0,T]$, $X(t)$ to be a Banach space of functions over $\Gamma(t)$; typically $L^2(\Gamma(t))$ or some higher-order Sobolev space $H^k(\Gamma(t))$, but also other $L^p(\Gamma(t))$ or even the dual space $H^{-1}(\Gamma(t)):=(H^1(\Gamma(t))^*$. We consider functions of the form 
\begin{align*}
u\colon [0, T]&\to \bigcup_{t\in[0,T]} X(t)\times \{t\}, \quad t\mapsto (\bar{u}(t), t)
\end{align*}
and identify $u(t)\equiv \overline u(t)$; in practice, we want to see $u(t)$ as an element of the space $X(t)$. The function spaces are defined as follows. 
\begin{itemize}
\item[(i)] For $p\in [1,\infty]$, $u\in L^p_{X}$ if  $t\mapsto \phi_{-(\cdot)}\bar{u}(\cdot)\in L^p(0, T; X_0)$ with the norm
 \begin{align*}
     \|u\|_{L^p_X} := \begin{cases}
         \left( \int_0^T \|u(t)\|_{X(t)}^p \right)^{1/p} &\text{ if } p<\infty \\
         \esssup_{t\in [0,T]} \|u(t)\|_{X(t)} &\text{ if } p=\infty
     \end{cases};
 \end{align*}
 if $X(t)=H(t)$ are Hilbert spaces, then so is $L^2_H$ with the inner product
\begin{align*}
(u,v)_{L^2_{H}} &:= \int_0^T (u(t), v(t))_{H(t)}. 
\end{align*} 

\item[(ii)] For $k\in \N\cup\{0\}$, $u\in C^k_{X}$ if $t\mapsto \phi_{-t}u(t)\in C^k([0,T]; X_0)$ and we define its time derivatives as 
\begin{align*}
    \partial^{\bullet, s} u(t) = \phi_t \dfrac{d^s}{dt^s} \phi_{-t} u(t), \quad s=1, \dots, k.
\end{align*}
We will denote $\partial^{\bullet, 1} = \partial^\bullet$; it is easy to see that the latter produces the same definition as in \eqref{eq:timederiv}. These spaces are endowed with the norm
\begin{align*}
\|u\|_{C^k_{X}} := \sup_{t\in [0,T]} \|u(t)\|_{X(t)} + \sum_{s=1}^k \sup_{t\in [0,T]} \|\partial^{\bullet, s} u(t)\|_{X(t)}.
\end{align*} 

\item[(iii)] A test function $u\in \mathcal{D}_{X}(0,T)$ if $t\mapsto \phi_{-t}u(t)\in C_c^\infty((0,T); X_0).$ \vskip 2mm

\item[(iv)]
We define the Banach space
\begin{align*}
H^1_{H^{-1}} := \left\{ u\in L^2_{H^1} \colon \md u\in L^2_{H^{-1}}\right\} \, \text{ with } \,\, \|u\|_{H^1_{H^{-1}}} := \|u\|_{L^2_{H^1}} + \| \partial^\bullet u \|_{L^2_{H^{-1}}}.
\end{align*}

\item[(v)] Similarly, $H^1_{L^2}$ denotes the space of those $u\in L^2_{H^1}$ which have a more regular weak time derivative $\md u\in L^2_{L^2}$. \vskip 2mm
\end{itemize}

\begin{remark}
We point out that in order to define the spaces $L^p_X$ we are implicitly assuming that the family $(\phi_t, X(t))_{t\in [0,T]}$ is compatible in the sense of \cite[Assumption 3.1]{CaeEll21}. Moreover, in the cases where $X(t)$ is a space of functions with some regularity, it is also assumed that the differentiation makes sense on $\Gamma(t)$. In our setting, these assumptions require appropriate regularity of the flow map $\Phi$ and the velocity field $\mathbf V$. More precisely, if for some $k\in\N$ we have
\begin{align*}
        \Phi_{(\cdot)}^0, \Phi_0^{(\cdot)}\in C^1([0,T]; C^k(\R^3, \R^3)) \quad \text{ and } \quad \mathbf V\in C^0([0,T];C^k(\R^3, \R^3)),
    \end{align*}  
then:
\begin{itemize}
    \item[(i)] $\Gamma(t)$ is a $C^k$-surface, and it makes sense to define strong and weak derivatives up to order $k$ for functions on $\Gamma(t)$ (in our case we have $k=2$);
    \item[(ii)] the pair $(\phi_t, X(t))_t$ is compatible where $X=L^p, \, C^0, \, H^{-1}, \, W^{r,p}, \, C^{r}$ for $p\in [1,\infty]$ and $1\leq r\leq k$ (see e.g. \cite[Section 7]{CaeEll21} for examples).
\end{itemize}
Notice that the assumptions in Sec.\ref{additional} are enough to guarantee that all the function spaces involved are well defined, as well as to ensure compatibility with the evolution.
\end{remark}

Let us now explicitly recall some notation introduced in \cite[Section 3.2]{CaeEll21}. In particular, for $t\in[0,T]$, we define the following bilinear forms:
\begin{enumerate}
	\item for $\eta,\phi\in L^2(\Gamma(t))$, the zero order terms
	$$
	m(t;\eta,\phi):=\int_{\Gamma(t)}\eta\phi, \quad
	g(t;\eta,\phi):=\int_{\Gamma(t)}\eta\phi\tgrad \cdot \mathbf{V}(t);
	$$
\item for $\eta\in L^2(\Gamma(t))$, $\phi\in H^1(\Gamma(t))$, the first order term
$$
a_N(t;\eta,\phi):=\int_{\Gamma(t)}\eta \mathbf{V}_a^\tau(t)\cdot \tgrad  \phi;
$$
\item for $\eta,\phi\in H^1(\Gamma(t))$, the second order terms
$$
a_S(t;\eta,\phi):=\int_{\Gamma(t)}\tgrad \eta\cdot \tgrad \phi, \quad b(t;\eta,\phi):=\int_{\Gamma(t)}B(\mathbf{V}(t))\tgrad \eta\cdot \tgrad \phi,
$$
with ${B}(\mathbf{V})=(\tgrad \cdot \mathbf{V})\mathbf{I}-2\mathbf{D}(\mathbf{V})$, where $\mathbf{D}(\mathbf{V})$ stands for the symmetrized rate tensor;
\item for $\eta\in H^{-1}(\Gamma(t))$, $\phi\in H^1(\Gamma(t))$, the duality pairing
$$
m_\star(t;\eta,\phi):=\langle\eta,\phi\rangle_{H^{-1}(\Gamma(t)),H^1(\Gamma(t))}.
$$
\end{enumerate}
For the sake of simplicity, from now on we will omit the explicit dependence on time and we will denote $\Vert\cdot\Vert_{L^2(\Gamma(t))}$ only by $\Vert\cdot\Vert$.
In conclusion, 
we recall \cite[Prop.2.8]{CaeEll21}, see also \cite[Sec.8.2]{DziEll13-a}) in
\begin{prop}\label{prop:transport}
	\label{propder}
	The following transport formulas hold:
	\begin{itemize}
	    \item[(a)] For $\eta, \phi\in H^1_{H^{-1}}$, 
	    \begin{align*}
	        \dfrac{d}{dt} m(\eta,\phi) = m_\ast(\partial^\bullet \eta, \phi) + m_\ast(\partial^\bullet \phi, \eta) + g(\eta,\phi).
	    \end{align*}
	    \item[(b)] If additionally $\tgrad \partial^\bullet \eta, \tgrad\partial^\bullet \phi\in L^2_{L^2}$, then
	    \begin{align*}
	        \dfrac{d}{dt} a_S (\eta, \phi) =a_S(\partial^\bullet \eta, \phi) + a_S (\eta, \partial^\bullet \phi) + b(\eta, \phi).
	    \end{align*}
	    \item[(c)] For $\eta,\phi\in H^1_{L^2}$, with $\tgrad \partial^\bullet \phi\in L^2_{L^2}$, the following identity holds
\begin{align}
\nonumber
\frac{d}{dt}a_N(\eta,\phi)&=a_N(\partial^\bullet\eta,\phi)+a_N(\eta,\partial^\bullet\phi)\\
&+\int_{\Gamma(t)}B_{adv}(\mathbf{V}_a^\tau(t),\mathbf{V}(t))\eta\cdot \tgrad \phi,
\label{der}
\end{align}
for almost any $t\in [0,T]$, where $B_{adv}$ is a vector field given by
$$
B_{adv}(\mathbf{V}_a^\tau,\mathbf{V})_i:=(\partial^\bullet\mathbf{V}_a^\tau)_i+(\tgrad \cdot \mathbf{V})(\mathbf{V}_a^\tau)_i-\sum_{j=1}^3(\mathbf{V}_a^\tau)_j\underline{D}^{\Gamma(t)}_j\mathbf{V}_i,\quad i=1,2,3.
$$
	\end{itemize}

\end{prop}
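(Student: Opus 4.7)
All three identities admit a common three-step plan: (i) establish them for \emph{smooth} data $\eta,\phi$ by a direct differentiation under the integral sign; (ii) reduce the surface derivative of the integrand to $\md$ of each factor by a product rule; (iii) extend to the stated function-space regularity by density and continuity of all bilinear forms involved. The unique analytic input is the Reynolds transport formula
\begin{equation*}
\frac{d}{dt}\int_{\Gamma(t)} f \;=\; \int_{\Gamma(t)} \md f + f\, \tgrad\cdot \mathbf V,
\end{equation*}
valid under Assumption $\mathbf A_\Phi$ for $f$ of class $C^1$ along the flow; see \cite{DziEll13-a}. Density of smooth functions in $H^1_{H^{-1}}$ and in the subspaces appearing in (b) and (c) is obtained by pulling back mollified functions on $\Gamma_0$.

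For (a), I take smooth $\eta,\phi$, apply the transport formula to $f=\eta\phi$, and use the Leibniz rule $\md(\eta\phi)=\phi\,\md\eta+\eta\,\md\phi$ to obtain
\begin{equation*}
\tfrac{d}{dt}m(\eta,\phi)=m(\md\eta,\phi)+m(\eta,\md\phi)+g(\eta,\phi).
\end{equation*}
For $\eta,\phi\in H^1_{H^{-1}}$ I approximate by smooth sequences; each $L^2$-pairing involving $\md\eta$ or $\md\phi$ converges to the corresponding duality pairing $m_\star$ by the very definition of the weak material derivative recalled just before the statement, while the remaining terms pass to the limit continuously.

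For (b), the extra ingredient is the commutator identity
\begin{equation*}
\md(\tgrad u) \;=\; \tgrad(\md u) - (\tgrad \mathbf V)^{\T}\tgrad u \;+\; (\text{normal part}),
\end{equation*}
which is valid for smooth $u$ on $\Gamma(t)$ and follows by differentiating any smooth extension of $u$ and projecting on the tangent space. Applying the transport formula to $f=\tgrad\eta\cdot\tgrad\phi$, distributing $\md$ via this commutator, and symmetrising the two occurrences of $\tgrad \mathbf V$ so that only the symmetric part $\mathbf D(\mathbf V)$ survives, I get
\begin{equation*}
\bigl[(\tgrad\cdot\mathbf V)\mathbf I - 2\mathbf D(\mathbf V)\bigr]\tgrad\eta\cdot\tgrad\phi \;=\; B(\mathbf V)\tgrad\eta\cdot\tgrad\phi,
\end{equation*}
which together with the two gradient cross-terms $a_S(\md\eta,\phi)+a_S(\eta,\md\phi)$ is exactly (b) for smooth data. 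Density then extends the identity to the regularity class $\eta,\phi\in H^1_{L^2}$ with $\md\eta,\md\phi\in L^2_{H^1}$.

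For (c), I apply the transport formula to $f=\eta\,\mathbf V_a^\tau\cdot\tgrad\phi$ and distribute $\md$. Three types of terms appear: the $a_N$-type pieces $a_N(\md\eta,\phi)+a_N(\eta,\md\phi)$; a contribution $\eta\,(\md\mathbf V_a^\tau)\cdot\tgrad\phi$, for which the regularity of $\md\mathbf V_a^\tau$ provided by Assumption $\mathbf B_\Phi.2.$ and \eqref{es22} is exactly what is needed to close the density argument; and a contribution from $\md(\tgrad\phi)$ which, by the commutator of (b), produces the term $-\sum_j (\mathbf V_a^\tau)_j \underline D^{\Gamma(t)}_j \mathbf V_i$. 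Collecting these with the $(\tgrad\cdot\mathbf V)(\mathbf V_a^\tau)_i$ factor from the transport formula assembles exactly the components of $B_{adv}(\mathbf V_a^\tau,\mathbf V)$. The main obstacle throughout is the commutator identity $[\md,\tgrad]$ in (b) and (c): its verification rests on a careful geometric computation using an extension of $u$ to a tubular neighbourhood of $\Gamma(t)$ and on correctly handling the non-tangential part of $\nabla\mathbf V$, the normal contributions cancelling only after the symmetrisation that converts $\nabla_\Gamma \mathbf V$ into $\mathbf D(\mathbf V)$; once this identity is in hand the remaining manipulations and the density arguments are routine.
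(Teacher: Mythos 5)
Your proposal is correct, but for the only part the paper actually proves it takes a genuinely different route. In the paper, items (a) and (b) are merely recalled from \cite{CaeEll21} and \cite{DziEll13-a}; only identity \eqref{der} in part (c) is proved, in Appendix \ref{app:geom}, and there the argument is a direct local-coordinate computation: $a_N(\eta,\phi)$ is written over a parametrization $X(\theta,t)$ with induced metric $g_{ij}$, the pulled-back integral is differentiated using the explicit formulas for $\partial_t g^{ij}$ and $\partial_t\sqrt g$, and the term $-\sum_j(\mathbf V_a^\tau)_j\underline{D}^{\Gamma(t)}_j\mathbf V_i$ together with the cancellation of the normal contribution (via $\underline{D}^{\Gamma(t)}_l x^r=\delta_{lr}-\nu_l\nu_r$ and the tangentiality of $\mathbf f=\mathbf V_a^\tau\eta$) emerges from that computation rather than from a quoted commutator lemma. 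You instead apply the Reynolds transport theorem to $f=\eta\,\mathbf V_a^\tau\cdot\tgrad\phi$ and distribute $\md$ via the commutator $[\md,\tgrad]$; this is equally valid and concentrates the geometric work in one standard lemma (essentially the gradient analogue of \cite[Lemma 2.6]{DziKroMul13}, whose divergence version the paper already quotes in Section \ref{additional}), at the price of having to state that commutator with the correct index structure: one needs $\md(\tgrad\phi)_i=\underline{D}^{\Gamma(t)}_i\md\phi-\underline{D}^{\Gamma(t)}_i\mathbf V_r\,\underline{D}^{\Gamma(t)}_r\phi+\nu_i(\cdots)$, so that contracting with the tangential field $\mathbf V_a^\tau$ produces exactly $-(\mathbf V_a^\tau)_i\underline{D}^{\Gamma(t)}_i\mathbf V_r\underline{D}^{\Gamma(t)}_r\phi$ as in $B_{adv}$; writing the correction as a transposed gradient, as you do, leaves this convention ambiguous, and in (b) the normal parts in fact drop because both factors are tangential gradients, not because of the symmetrisation (which only repackages the two cross terms as $-2\mathbf D(\mathbf V)$). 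Your density arguments and the use of Assumptions $\textbf{B}_\Phi$ (through \eqref{es1}) to control $\md\mathbf V_a^\tau$ are consistent with the paper's framework.
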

In the above we use the same notation as in \cite{EllRan21}. We postpone the proof of this result to Appendix \ref{app:geom}.

{\color{black}Let us now recall the compatibility condition on the initial datum $u_0$ in order to obtain the existence of a (weak) solution to the Cahn-Hilliard equation with logarithmic potential. As noted in \cite{CaeEll21}, there is an interplay between the evolution of the surfaces $\Gamma(t)$ and the admissible initial conditions. In effect, setting $$m_\eta(t):=\frac{1}{\vert\Gamma(t)\vert}\left\vert\int_{\Gamma_0}\eta\right\vert,$$ we require that the initial datum $u_0$ satisfies 
\begin{align}\label{eq:compcondition}
\max_{t\in[0,T]}m_{u_0}(t)<1.    
\end{align}
Although it might seem unnatural to prescribe a condition involving the initial data and the area of the surfaces at all future times, this assumption has an interesting physical meaning. Let us introduce the maximum shrinkage ratio $S_R$ on $[0,T]$, which is \textit{a priori} prescribed by the evolution of the surfaces $\Gamma(t)$, as:
$$
S_R:=\max_{t\in[0,T]}\frac{\vert\Gamma_0\vert}{\vert\Gamma(t)\vert}.
$$
Condition \eqref{eq:compcondition} then means that
\begin{align}
|(u_0)_{\Gamma_0}|S_R<1.
\label{cond}
\end{align}
Since at time $t=0$ we already assume $|(u_0)_{\Gamma_0}|<1$ for the initial mass (meaning that we do not start with a one-phase mixture), condition \eqref{cond} shows that the absolute value of the initial mass, $|(u_0)_{\Gamma_0}|$, compensates for the maximum shrinkage ratio $S_R$ of the evolving surface $\Gamma(t)$ on $[0,T]$. Therefore, the higher the value of $S_R$ (i.e., the smaller the surface $\Gamma(t)$ becomes over $[0,T]$), the further $u_0$ must be from the pure phases $\pm1$ in $\Gamma_0$.

Moreover, recalling that the Cahn-Hilliard dynamics implies that the total mass of the solution is preserved, namely, $$\int_{\Gamma(t)}u(t)\equiv \int_{\Gamma_0}u_0, \quad \text{for all }t\in[0,T],$$ we can write
$$
\vert(u(t))_{\Gamma(t)}\vert=m_{u_0}(t)
$$
and thus the condition $m_{u_0}(t)<1$ for every $t\in[0,T]$ is the dynamic counterpart of
$$
\vert(u(t))_{\Omega}\vert= \left\vert \dfrac{1}{|\Omega|}\int_\Omega u(t) \,dx\right\vert = \left\vert\dfrac{1}{|\Omega|}\int_\Omega u_0 \, dx\right\vert=\vert(u_0)_\Omega\vert<1
$$
which holds in the case of a static bounded domain $\Omega$.

For future use, let us then define the set $\mathcal{I}_0$ of admissible initial conditions as
$$
\mathcal{I}_0:=\left\{\eta\in H^1(\Gamma_0):  \vert\eta\vert\leq 1\text{ a.e. on }\Gamma_0,\ \text{and } \left\vert(\eta)_{\Gamma_0}\right\vert S_R<1\right\}. 
$$}
We recall that the free energy is given by
$$
E^{CH}[u]=\int_{\Gamma(t)}\left(\frac{\vert\nabla u\vert^2}{2}+F(u)\right),
$$
with $F$ logarithmic potential. Note that, if $u_0\in \mathcal{I}_0$, then $u_0$ has finite energy. Indeed
$\Vert   u_0\Vert_{L^\infty(\Gamma_0)}\leq 1$ implies $F(u_0)\in L^1(\Gamma_0)$.
We can now recall the result obtained in \cite[Theorems 5.14, 5.15]{CaeEll21} for
\begin{align}
\label{logpot}
F(s)=\frac{\theta}{2}((1+r)\ln(1+r)+(1-r)\ln(1-r))+\frac{1-r^2}{2},\quad r\in(-1,1),
\end{align}
where $0<\theta<1$ ($F$ is extended by continuity at $r=\pm1$). We set $F_{ln}(r):=(1+r)\ln(1+r)+(1-r)\ln(1-r)$ for the sake
of simplicity and denote $\varphi(s)=F'_{ln}(s)$. Note that this choice corresponds to the singular potential \eqref{sing} with $\theta_0=1$, up to a translation to get directly {\color{black}$F\geq 0$} in $[-1,1]$.

\begin{remark}
 Concerning the logarithmic potential, there exists a constant $C>0$ such that
	\begin{align}
	\label{explog}
	\frac{\theta}{2}\varphi^\prime(s)\leq e^{C\left\vert\frac{\theta}{2}\varphi(s)\right\vert+C},\quad\forall s\in(-1,1).
	\end{align}
\end{remark}
{\color{black} We now let Assumption $\mathbf{A}_\Phi$} hold: we have (see \cite{CaeEll21})
\begin{theorem}
	\label{existence}
	Let $u_0\in \mathcal{I}_0$ and $F:[-1,1]\to \R$ be given by \eqref{logpot}. Then there exists a unique pair $(u,w)$ with
	$$u\in L^\infty_{H^1}\cap H^1_{H^{-1}}\quad \text{ and } \quad w\in L^2_{H^1},$$ such that, for almost any $t\in(0,T]$, $\vert u(t)\vert<1$ almost everywhere on $\Gamma(t)$ and $u$ satisfies, for all $\eta\in L^2_{H^1}$ and almost any $t\in[0,T]$,
	\begin{align}
	&m_\star(\partial^\bullet u, \eta)+g(u,\eta)+a_N(u,\eta)+a_S(w,\eta)=0,\label{CH}\\&
	a_S(u,\eta)+\frac{\theta}{2}m(\varphi(u),\eta)-m(u,\eta)-m(w,\eta)=0,\label{mu}
	\end{align}
	where $u(0)=u_0$ almost everywhere in $\Gamma_0$. The solution $u$ also satisfies the additional regularity
	$$u\in C^0_{L^2}\cap L^\infty_{L^p}\cap L^2_{H^2},$$
	for all $p\in[1,+\infty)$. Furthermore, if $u_0,v_0\in \mathcal{I}_0$ are such that $(u_0)_{\Gamma_0}=(v_0)_{\Gamma_0}$, and $u,v$ are the solutions of the system with $u(0)=u_0$ and $v(0)=v_0$, then there exists a constant $C>0$ independent of $t$, such that, for almost any $t\in[0,T]$,
	$$\Vert u(t)-v(t)\Vert_{H^{-1}(\Gamma(t))}\leq e^{Ct}\Vert u_0-v_0\Vert_{H^{-1}(\Gamma(t))}.$$
\end{theorem}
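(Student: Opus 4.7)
The strategy is the classical one for Cahn--Hilliard with logarithmic potential, adapted to the evolving surface setting via the framework of Section \ref{weakk}. First I would regularise the singular potential: for $\delta>0$, define $F_\delta\in C^2(\R)$ by extending $F_{ln}$ smoothly and quadratically outside $[-1+\delta, 1-\delta]$, so that $\varphi_\delta:=F_{ln,\delta}'$ is globally Lipschitz and monotone. Next I would set up a Galerkin approximation by pushing forward the $L^2$-orthonormal eigenfunctions $\{\chi_i\}$ of $-\Delta_{\Gamma_0}+I$ via $\phi_t$, and project \eqref{CH}--\eqref{mu} (with $F$ replaced by $F_\delta$) onto the finite-dimensional subspace $V_M(t):=\text{span}\{\phi_t\chi_1,\ldots,\phi_t\chi_M\}$. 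Local existence of the resulting ODE system follows from Cauchy--Lipschitz; global existence will follow from the energy estimate below.

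The first main estimate is energy dissipation. Testing \eqref{CH} with $w$ and \eqref{mu} with $\partial^\bullet u$, and combining using the transport formulas of Proposition \ref{prop:transport} together with $\frac{d}{dt}\int_{\Gamma(t)}F_\delta(u)=\int_{\Gamma(t)}F_\delta'(u)\partial^\bullet u+\int_{\Gamma(t)}F_\delta(u)\tgrad\cdot\mathbf{V}$, yields
\begin{equation*}
\frac{d}{dt}E^{CH}_\delta[u]+\|\tgrad w\|^2 \le C\bigl(\|u\|_{H^1}^2+\|F_\delta(u)\|_{L^1}+1\bigr)+|a_N(u,w)|,
\end{equation*}
where the right-hand side collects the geometric correction terms involving $b(u,u)$, $g(u,w)$ and $\int F_\delta(u)\tgrad\cdot\mathbf V$, all controlled by \eqref{es1}, \eqref{es2} and the regularity of $\mathbf V_a^\tau$. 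The advective term $a_N(u,w)$ is absorbed via Cauchy--Schwarz and Young's inequality into $\|\tgrad w\|^2$ and lower-order norms. Gronwall then gives uniform-in-$M,\delta$ bounds on $u$ in $L^\infty_{H^1}$ and on $\tgrad w$ in $L^2_{L^2}$.

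The second key estimate controls the mean of $w$ and hence $\varphi_\delta(u)$ in $L^1$. Testing \eqref{mu} with $\eta\equiv 1$ gives $(w)_{\Gamma(t)}=\frac{\theta}{2}(\varphi_\delta(u))_{\Gamma(t)}-(u)_{\Gamma(t)}$. Then, testing \eqref{mu} with $\varphi_\delta(u)-(\varphi_\delta(u))_{\Gamma(t)}$ and using monotonicity of $\varphi_\delta$, one obtains, after a classical manipulation of the logarithmic potential (see \cite{MirZel04,GioGraMir17}), the bound
\begin{equation*}
\|\varphi_\delta(u(t))\|_{L^1(\Gamma(t))}\le C\bigl(1+\|\tgrad w(t)\|\bigr),
\end{equation*}
where $C$ depends on $1-\max_{t\in[0,T]}m_{u_0}(t)>0$. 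This is where the compatibility condition \eqref{eq:compcondition} enters crucially: because mass is preserved, $|(u(t))_{\Gamma(t)}|=m_{u_0}(t)<1$ is uniformly bounded away from the pure phases on the whole time interval, so the standard Miranville--Zelik lemma applies. Together with the energy bound this gives $w\in L^2_{H^1}$ uniformly in $\delta,M$, and elliptic regularity applied to $-\Delta_\Gamma u=w+u-\frac{\theta}{2}\varphi_\delta(u)$ yields $u\in L^2_{H^2}$ and $\partial^\bullet u\in L^2_{H^{-1}}$.

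Passage to the limit $M\to\infty$ and then $\delta\to 0$ uses the Aubin--Lions compactness of Appendix \ref{app:embedding} to extract a strongly convergent subsequence in $L^2_{L^2}$, hence pointwise a.e., and Fatou's lemma on $\int F_{ln}(u)$ forces $|u|\le 1$ a.e.; the $L^1$-bound on $\varphi_\delta(u)$ combined with the explosion of $\varphi$ at $\pm 1$ upgrades this to the strict bound $|u(t)|<1$ a.e.\ for a.e.\ $t$. The additional regularity $u\in C^0_{L^2}\cap L^\infty_{L^p}\cap L^2_{H^2}$ follows from $u\in H^1_{H^{-1}}\cap L^\infty_{H^1}$ by Aubin--Lions (and Sobolev on the $2$-surface) together with the elliptic estimate above.

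For the continuous dependence statement I would set $\hat u=u-v$, $\hat w=w-\bar w$, subtract the equations, and test with $\invlaprho\hat u$ (the inverse Laplace--Beltrami operator acting on zero-mean functions, well-defined since $(\hat u)_{\Gamma(t)}=0$ by conservation of mass and the assumption $(u_0)_{\Gamma_0}=(v_0)_{\Gamma_0}$). Using monotonicity of $\varphi$, the transport formula for $a_S(\invlaprho\hat u,\invlaprho\hat u)=\|\hat u\|_{H^{-1}(\Gamma(t))}^2$, and standard commutator estimates for the time-dependent inverse Laplacian, one arrives at
\begin{equation*}
\frac{d}{dt}\|\hat u\|_{H^{-1}(\Gamma(t))}^2+\|\hat u\|_{L^2(\Gamma(t))}^2\le C\|\hat u\|_{H^{-1}(\Gamma(t))}^2,
\end{equation*}
and Gronwall yields the claim.

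\textbf{Main obstacle.} The hardest point is the $L^1$-control of $\varphi_\delta(u)$ uniform in $\delta$, which is what forces the strict separation $|u|<1$ in the limit. On a fixed domain this rests on the classical identity for the mean of the logarithmic nonlinearity; on an evolving surface one must show that the geometric terms $g$, $a_N$ and the time-dependence of $(u(t))_{\Gamma(t)}$ do not destroy the argument, and this is precisely where the dynamic compatibility condition $\max_t m_{u_0}(t)<1$ is essential, rather than the static $|(u_0)_{\Gamma_0}|<1$.
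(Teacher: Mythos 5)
Your proposal is correct and follows essentially the same route as the paper's: note that the paper does not actually prove Theorem \ref{existence} here but recalls it from \cite{CaeEll21}, and the strategy you outline (regularised potential $F^\delta$, transported Galerkin basis, energy estimate, control of $m(w,1)$ via the dynamic compatibility condition $\max_{t\in[0,T]}m_{u_0}(t)<1$, compactness in $M$ and then $\delta$, and testing the difference of two solutions with the inverse Laplacian on zero-mean functions for stability) is exactly the one used there and largely reproduced in Section \ref{firstmodel} and in the stability proof given for the weighted model. One small correction: the $L^1$-bound on $\varphi_\delta(u)$ is obtained by testing \eqref{mu} with $u-(u)_{\Gamma(t)}$ and using the structural inequality $\varphi_\delta(s)(s-m)\geq c_\alpha\vert\varphi_\delta(s)\vert-C_\alpha$ for $\vert m\vert\leq\alpha<1$ (this is where \eqref{eq:compcondition} enters), not by testing with $\varphi_\delta(u)-(\varphi_\delta(u))_{\Gamma(t)}$, which only controls the $L^2$-oscillation of $\varphi_\delta(u)$ and leaves its mean — the quantity actually needed for $\vert m(w,1)\vert$ — undetermined.
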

\begin{remark}
\label{LfourHtworeg}
We notice that the regularity stated in Theorem \ref{existence} can be slightly improved. In particular, since $u\in L^2_{H^2}$ solves the problem, for almost all $t\in [0,T]$,
$$
-\Delta_{\Gamma}u(t)=w(t)-F^{\prime}(u(t))\in L^2({\Gamma(t)}),
$$
we are allowed to multiply by $-\Delta_{\Gamma}u\in L^2(\Gamma(t))$ for almost any $t\in[0,T]$. Recalling that $\varphi^\prime>0$, after an integration by parts, being $\Gamma(t)$ closed and $u\in L^\infty_{H^1}$, we obtain
\begin{align*}
\norm{\Delta_{\Gamma}u}^2&\leq \norm{\Delta_{\Gamma}u}^2+\frac{\theta}{2}m(\varphi^\prime(u),\vert\tgrad u\vert^2)\leq \norm{\tgrad  w}\norm{\tgrad  u}+\Vert\tgrad u\Vert^2\leq C(1+\norm{\tgrad  w}),
\end{align*}
and knowing that $w\in L^2_{H^1}$, we infer $u\in L^4_{H^2}$.
\end{remark}

\subsection{Regularisation and strict separation property}
In this subsection we need {\color{black}Assumptions $\textbf{B}_\Phi$.}
We first show that the approximating solution given by the Galerkin scheme is consequently more regular. This allows us to perform higher-order regularisation estimates and to establish the strict separation property.

\subsubsection{Galerkin approximation}
Motivated by the approach in \cite{CaeEll21}, we start by considering the Galerkin approximation with the approximated potential $F^\delta\in C^2(\R)$ {\color{black}defined by 
$$
F^\delta(r):=F_{ln}^\delta(r)+\frac{1-r^2}{2},\quad \forall r\in\R,
$$
where}
\begin{align*}
    F^\delta_{ln}(r) = \dfrac{\theta}{2}
    \begin{cases}
    (1-r)\ln(\delta)+(1+r)\ln(2-\delta) + \frac{(1-r)^2}{2\delta} + \frac{(1+r)^2}{2(2-\delta)} - 1, & \, r\geq 1-\delta \\
    (1+r)\ln(1+r) + (1-r)\ln(1-r), & \, |r|\leq 1-\delta \\
    (1+r)\ln(\delta)+(1-r)\ln(2-\delta) + \frac{(1+r)^2}{2\delta} + \frac{(1-r)^2}{2(2-\delta)} -1, & \, r\leq -1+\delta
    \end{cases}
\end{align*}
and denote $\varphi^\delta=(F_{ln}^\delta)^\prime$. Let us recall \cite[Sec.4.1]{CaeEll21} and consider a basis $\{\chi_j^0: j\in \N\}$ orthonormal in $L^2(\Gamma_0)$ and orthogonal in $H^1(\Gamma_0)$ consisting of smooth functions such that $\chi_1^0$ is constant (for example consider the eigenfunctions of the Laplace-Beltrami operator). We then transport this basis using the flow map. This gives $\{\chi_j^t:=\phi_t(\chi_j^0): j\in \N\}\subset H^1(\Gamma(t))$ and we define the finite-dimensional spaces 
\begin{align}
V_M(t) := \{\chi_j^t \colon j=1,\dots,M\}.
\label{VM}
\end{align}
The goal in this section is to find an approximating solution pair $(u^M, w^M)$ in these spaces; more precisely, for each $M\in \N$, we aim to find functions $u^M,w^M\in L^2_{V_M}$  with $\partial^\bullet u^M\in L^2_{V_M}$ such that, for any $\eta\in L^2_{V_M}$ and all $t\in[0,T]$, 
	\begin{align}
&m(\partial^\bullet u^M, \eta)+g({\color{black}u^M},\eta)+a_N(u^M,\eta)+a_S(w^M,\eta)=0,\label{CH2}\\&
a_S(u^M,\eta)+m((F^\delta)^\prime(u^M),\eta)-m(w^M,\eta)=0,\label{mu2}
\end{align}
and $u^M(0)=P_M^0u_0$ almost everywhere in $\Gamma_0$ ($P_M^0$ is the $L^2$ orthogonal projector operator at $t=0$ (see \cite[Sec.4.1]{CaeEll21}). 
We first prove a refinement of \cite[Prop.4.4]{CaeEll21}, namely,
\begin{prop}
	{\color{black}Let Assumptions $\textbf{B}_\Phi$ hold.} Then there exists a unique local solution pair to \eqref{CH2}-\eqref{mu2}. In particular there exist functions $(u^M,w^M)$ satisfying \eqref{CH2}-\eqref{mu2} on an interval $[0,t^\star)$, $0\leq t^\star\leq T$, together with the initial condition $u^M(0)=P_M^0u_0$. The functions are of the form
	$$
	u^M(t)=\sum_{i=1}^M u_i^M(t)\chi_i^t,\qquad w^M(t)=\sum_{i=1}^M w_i^M(t)\chi_i^t, \qquad t\in[0,t^\star),
	$$
	with $u_i^M\in C^2([0,t^\star))$ and $w_i^M\in C^2([0,t^\star))$, for every $i\in\{1,\ldots,M\}$.
	\label{proposition}
\end{prop}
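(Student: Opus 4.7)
The plan is to convert \eqref{CH2}--\eqref{mu2} into a closed-form system of ODEs for the coefficient vectors $\vec u(t):=(u_i^M(t))_{i=1}^M$ and $\vec w(t):=(w_i^M(t))_{i=1}^M$ and then apply Picard--Lindel\"of. Since $\{\chi_j^t\}$ is obtained from a fixed $\Gamma_0$-basis by the flow, $\md \chi_j^t\equiv 0$, so testing with $\eta=\chi_j^t$ gives, for $j=1,\dots,M$,
\begin{align*}
\sum_i \dot u_i^M\, m(\chi_i^t,\chi_j^t)+\sum_i u_i^M\bigl[g(\chi_i^t,\chi_j^t)+a_N(\chi_i^t,\chi_j^t)\bigr]+\sum_i w_i^M\, a_S(\chi_i^t,\chi_j^t)&=0,\\
\sum_i u_i^M\, a_S(\chi_i^t,\chi_j^t)+m\bigl((F^\delta)'(u^M),\chi_j^t\bigr)-\sum_i w_i^M\, m(\chi_i^t,\chi_j^t)&=0.
\end{align*}
Denote the corresponding $M\times M$ time-dependent matrices by $\mathbf M(t),\mathbf K(t),\mathbf G(t),\mathbf A_N(t)$ and the nonlinear vector by $\vec F(\vec u,t)_j:=m((F^\delta)'(\sum_i u_i\chi_i^t),\chi_j^t)$.

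The pullbacks $\phi_{-t}\chi_i^t=\chi_i^0$ form an $L^2(\Gamma_0)$-orthonormal family, and \eqref{equiv} ensures that $J_t^0$ is bounded below uniformly in $t$, so $\mathbf M(t)$ is symmetric positive definite with uniformly bounded inverse on $[0,T]$. Solving the second equation for $\vec w=\mathbf M^{-1}(\mathbf K\vec u+\vec F(\vec u,t))$ and substituting into the first gives the explicit Cauchy problem
\begin{equation*}
\dot{\vec u}=-\mathbf M^{-1}\!\left[(\mathbf G+\mathbf A_N)\vec u+\mathbf K\mathbf M^{-1}(\mathbf K\vec u+\vec F(\vec u,t))\right],\qquad \vec u(0)=\bigl((P_M^0u_0,\chi_i^0)_{L^2(\Gamma_0)}\bigr)_{i=1}^M.
\end{equation*}
Under Assumption $\mathbf A_\Phi$ the matrix entries are continuous in $t$; since $F^\delta\in C^2(\R)$ has derivatives bounded on bounded sets and $V_M(t)$ is finite-dimensional, $\vec F(\cdot,t)$ is locally Lipschitz in $\vec u$, uniformly on compact $t$-intervals. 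Picard--Lindel\"of then yields a unique $\vec u\in C^1([0,t^\star))$ on a maximal existence interval, and the resolved identity for $\vec w$ provides a continuous $\vec w$ with the same regularity, producing the required solution pair $(u^M,w^M)$.

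It remains to upgrade the regularity to $C^2$. By Assumptions $\textbf{B}_\Phi$ and the transport identities of Proposition~\ref{prop:transport}, all the entries of $\mathbf M,\mathbf K,\mathbf G,\mathbf A_N$ are $C^1$ in $t$; the most delicate case is $\mathbf A_N$, whose $t$-derivative through \eqref{der} involves $\md\mathbf V_a^\tau$, which is controlled precisely by \eqref{es22}. Combined with $F^\delta\in C^2$, this makes the right-hand side of the ODE jointly $C^1$ in $(t,\vec u)$, so a standard bootstrap elevates $\vec u$ to $C^2([0,t^\star))$, and then $\vec w\in C^2([0,t^\star))$ follows from its explicit expression. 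The key obstacle, and the only new input compared to \cite[Prop.~4.4]{CaeEll21}, is this $C^1$-in-time control of the advective bilinear form $a_N$; this is exactly what motivates the additional regularity of $\mathbf V_a$ postulated in $\textbf{B}_\Phi$.
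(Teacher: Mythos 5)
Your proposal is correct and follows essentially the same route as the paper: both reduce \eqref{CH2}--\eqref{mu2} to a matrix ODE system for the coefficient vectors, use the transport formulas of Proposition \ref{prop:transport} together with Assumptions $\textbf{B}_\Phi$ (in particular the bounds \eqref{es1}--\eqref{es2} on $\partial^\bullet\mathbf{V}_a^\tau$ and $\partial^\bullet\tgrad\cdot\mathbf{V}$) to show the entries of $\mathbf M,\mathbf M^{-1},\mathbf K,\mathbf G,\mathbf A_N$ and the nonlinearity have continuous time derivatives, and then invoke standard ODE theory with the $C^{1,1}$ regularity of $(F^\delta)'$. Your explicit elimination of $\vec w$ and the Picard--Lindel\"of plus bootstrap phrasing is just a more detailed rendering of the paper's ``the result follows from the general ODE theory.''
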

\begin{proof}
	We consider the matrix form of the equations, where we denote $\mathbf{u}^M(t)=(u_1^M(t),\ldots,u_M^M(t))$ and $\mathbf{w}^M(t)=(w_1^M(t),\ldots,w_M^M(t))$,  \begin{align*}
	&M(t)\dot{\mathbf{u}}^M(t)+{G}(t)\mathbf{{u}}^M(t)+A_N(t)\mathbf{u}^M(t)+A_S(t)\mathbf{w}^M(t)=0,\\&
	A_S(t)\mathbf{u}^M(t)+(\mathbf{F}^\delta)^\prime(\mathbf{{u}}^M(t))-M(t)\mathbf{w}^M(t)=0.
	\end{align*}
	Here
    \begin{align*}
    &(M(t))_{ij}=m(t;\chi_i^t,\chi_j^t),\qquad ({G}(t))_{ij}=g(t;{\color{black}\chi_j^t},\chi_i^t),\\&
    (A_S(t))_{ij}=a_S(t;\chi^t_i,\chi^t_j),\qquad (A_N(t))_{ij}=a_N(t;\chi_j^t,\chi_i^t),
    \end{align*}
    and
    $$
    (\mathbf{F}^\delta)^\prime(\mathbf{u}^M(t))_j=m(t;(F^\delta)^\prime(u^M(t)),\chi_j^t).
    $$
    We now observe that actually these matrices enjoy more regularity than noted in \cite{CaeEll21}. Indeed, let us consider for example $M_{ij}$. By \cite[Prop. 2.8]{CaeEll21}, recalling that $\partial^\bullet \chi_i^t\equiv0$ for any $i=1,\ldots,M$, we have
    $$
    \frac{d}{dt}M_{ij}=g(\chi^t_i,\chi^t_j)\in C^0([0,T]),
    $$
    due to the regularity assumptions on $\Gamma(t)$, on the corresponding flow map and on the velocity field. We also have
    $$
    \frac{d}{dt}(M^{-1})_{ij}=-\left(M^{-1}\left(\frac{d}{dt}M\right)M^{-1}\right)_{ij}\in C^0([0,T]).
    $$
    Similarly, we get
        $$
    \frac{d}{dt}(A_S(t))_{ij}=b(\chi^t_i,\chi^t_j)\in C^0([0,T]),
    $$
    and, by \eqref{der},
      $$
    \frac{d}{dt}(A_N(t))_{ij}=\int_{\Gamma(t)}B_{adv}(\mathbf{V}_a^\tau(t),\mathbf{V}(t))\chi^t_j\cdot \tgrad \chi_i^t\in C^0([0,T]).
    $$
   {\color{black} Moreover, by Proposition \ref{prop:transport} point (a),
    \begin{align*}
    \frac{d}{dt}({G}(t))_{ij}=m(\chi_i^t&\partial^\bullet(\tgrad \cdot \mathbf{V}),\chi_j^t )+g\left(\chi_i^t\tgrad \cdot \mathbf{V},\chi_j^t\right)\in C^0([0,T]),
    \end{align*}
    thanks to \eqref{es2}.}
    Furthermore, we get
     $$
    \frac{d}{dt}(\mathbf{F}^\delta)^\prime(\mathbf{y})_j=g\left((F^\delta)^\prime\left(\sum_{i=1}^M y_i\chi_i^t\right),\chi_j^t\right)\in C^0([0,T]).
    $$
    Recalling then that $(F^\delta)^\prime$ is $C^{1,1}(\R)$, ensuring the spatial $C^{1,1}$-continuity of the $\mathbf{y}$-Jacobian of $\mathbf{y} \mapsto (\textbf{F}^\delta)^\prime(\mathbf{y})$, the result follows from the general ODE theory.
\end{proof}

We now obtain the energy estimate for the Galerkin approximation, without letting $M\rightarrow\infty$ (cf. \cite{CaeEll21}). This is essential in order to maintain the necessary regularity to perform higher-order estimates. 

\begin{prop}
\label{energ}
    Denoting by $(u_\delta^M, w_\delta^M)$ the solution pair to  \eqref{CH2}-\eqref{mu2}, we have the energy estimate 
    \begin{align}
\sup_{t\in [0,T]}\tilde{E}^{CH}[t;u_\delta^M(t)]+
\int_0^T\Vert\tgrad w_\delta^M\Vert^2dt \leq C_\delta(T),
\end{align}
{\color{black}for some $C_\delta(T)$ depending on $\delta$ and $T$, but not on $M$,} where $\tilde{E}^{CH}:=E^{CH}+\tilde C$ for some $\tilde C>0$ chosen so that $\tilde{E}^{CH}\geq 0$. {\color{black}Moreover, it holds 
\begin{align}
\sup_{t\in[0,T]}\tilde{E}^{CH}[u_\delta^M(t)]&+\frac{1}{4}\int_0^T\Vert\tgrad w_\delta^M\Vert^2\leq C(T)\left(1+\int_0^T\vert m(w_\delta^M,1)\vert^2 dt\right),
    \label{energylimit}
\end{align}
for some $C(T)>0$ independent of $M,\delta$ but possibly depending on $T$.}
\end{prop}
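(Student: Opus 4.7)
The plan is to perform the evolving-surface analogue of the classical Cahn--Hilliard energy estimate and then close the bound via a Gronwall argument. First, I would test \eqref{CH2} with $\eta=w_\delta^M(t)\in V_M(t)$ and \eqref{mu2} with $\eta=\partial^\bullet u_\delta^M(t)\in V_M(t)$ --- the second being admissible because, since $\partial^\bullet\chi_i^t\equiv 0$ for the transported basis, one has $\partial^\bullet u_\delta^M=\sum_i\dot u_i^M(t)\chi_i^t\in V_M$. Combining the two identities, using Proposition~\ref{prop:transport}(b) to rewrite $a_S(\partial^\bullet u^M,u^M)$ as $\tfrac{d}{dt}(\tfrac12 a_S(u^M,u^M))-\tfrac12 b(u^M,u^M)$, and applying the transport formula $\tfrac{d}{dt}\int_{\Gamma(t)} F^\delta(u^M)=m((F^\delta)'(u^M),\partial^\bullet u^M)+\int_{\Gamma(t)} F^\delta(u^M)\,\tgrad\!\cdot\!\mathbf{V}$, one obtains the perturbed dissipation identity
\[
\frac{d}{dt}E^{CH}[u^M]+\Vert\tgrad w^M\Vert^2=\tfrac12 b(u^M,u^M)+\int_{\Gamma(t)} F^\delta(u^M)\,\tgrad\!\cdot\!\mathbf{V}-g(u^M,w^M)-a_N(u^M,w^M),
\]
where the four terms on the right are the corrections due to the surface evolution.

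Next, I would obtain two $\delta$-uniform ingredients. (i) A $\delta$-uniform lower bound $F^\delta\geq-\tilde C$ for some $\tilde C>0$, by direct inspection of the three branches of $F^\delta_{ln}$: on $|r|\leq 1-\delta$ it coincides with the bounded logarithmic potential, while on the outer branches a short computation shows that $(F^\delta_{ln})'(1-\delta)=\ln((2-\delta)/\delta)>0$, so the minimum is attained at the matching points $r=\pm(1-\delta)$, with value $F_{ln}(1-\delta)$, uniformly bounded as $\delta\to 0$. This justifies $\tilde E^{CH}:=E^{CH}+\tilde C|\Gamma(t)|\geq 0$. (ii) Mass conservation: testing \eqref{CH2} with $\eta=1$ --- admissible since $\chi_1^t$ is constant --- and using Proposition~\ref{prop:transport}(a) gives $\tfrac{d}{dt}m(u^M,1)=0$, whence $|m(u^M(t),1)|\leq C$ uniformly. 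Combined with the Poincar\'e inequality on the closed surface $\Gamma(t)$ and \eqref{equiv} to make the constant uniform in $t$, this produces $\Vert u^M\Vert^2\leq C(\Vert\tgrad u^M\Vert^2+1)\leq C(\tilde E^{CH}+1)$; by Poincar\'e--Wirtinger, also $\Vert w^M\Vert^2\leq C(\Vert\tgrad w^M\Vert^2+|m(w^M,1)|^2)$.

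I would then estimate the RHS of the identity using \eqref{es1}--\eqref{es2}, Cauchy--Schwarz and Young's inequality with a small parameter $\epsilon>0$: the terms $\tfrac12|b(u^M,u^M)|$ and $|\int F^\delta(u^M)\tgrad\!\cdot\!\mathbf{V}|$ (using $|F^\delta|\leq F^\delta+2\tilde C$) are absorbed into $C\tilde E^{CH}+C$; the term $|g(u^M,w^M)|$ yields $\epsilon C(\Vert\tgrad w^M\Vert^2+|m(w^M,1)|^2)+C_\epsilon(\tilde E^{CH}+1)$; and $|a_N(u^M,w^M)|$ yields $\epsilon\Vert\tgrad w^M\Vert^2+C_\epsilon(\tilde E^{CH}+1)$. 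Choosing $\epsilon$ so that the coefficient of $\Vert\tgrad w^M\Vert^2$ on the right does not exceed $3/4$ produces
\[
\frac{d}{dt}\tilde E^{CH}[u^M]+\tfrac14\Vert\tgrad w^M\Vert^2\leq C\tilde E^{CH}[u^M]+C\bigl(1+|m(w^M,1)|^2\bigr).
\]
Since $u_0\in\mathcal I_0$ ensures $\tilde E^{CH}[u^M(0)]\leq C$ uniformly in $M$ and (for $\delta$ small) in $\delta$, integrating on $[0,T]$ and applying Gronwall yields \eqref{energylimit}.

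For the first, $\delta$-dependent, bound, I would further control $|m(w^M,1)|$ by testing \eqref{mu2} with $\eta=1$: since $a_S(u^M,1)=0$, this gives $m(w^M,1)=\int_{\Gamma(t)}(F^\delta)'(u^M)$. Because $F^\delta$ is at most quadratic outside $[-1+\delta,1-\delta]$, one has $|(F^\delta)'(r)|\leq C_\delta(1+|r|)$, hence $|m(w^M,1)|^2\leq C_\delta(1+\Vert u^M\Vert^2)\leq C_\delta(1+\tilde E^{CH}[u^M])$. Plugging this into the previous differential inequality and applying Gronwall once more produces the $\delta$-dependent but $M$-independent first assertion; the local-to-global extension of the Galerkin solution is then automatic from Proposition~\ref{proposition}. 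The main obstacle is the careful bookkeeping required to guarantee that every constant in the stronger estimate \eqref{energylimit} depends only on $T$ and the structural data, not on $\delta$: the essential ingredients are the $\delta$-uniform lower bound on $F^\delta$ and the mass conservation identity derived from $\eta=1$ in \eqref{CH2}, which together furnish a $\delta$-free $L^2$-control of $u^M$ by the shifted energy, thereby decoupling the closing argument from the $\delta$-dependent quadratic extension of the potential.
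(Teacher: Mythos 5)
Your proposal is correct and follows essentially the same route as the paper: the energy dissipation identity obtained by testing with $w_\delta^M$ and $\partial^\bullet u_\delta^M$ (which the paper imports from \cite[Prop.~5.7]{CaeEll21} rather than rederiving), mass conservation from $\eta\equiv 1$ plus Poincar\'e to control $\Vert u_\delta^M\Vert$ by the shifted energy, absorption of the evolution terms to reach the differential inequality with $\vert m(w_\delta^M,1)\vert^2$ on the right for the $\delta$-uniform bound \eqref{energylimit}, and the $\delta$-dependent closure via $m(w_\delta^M,1)=m((F^\delta)'(u_\delta^M),1)$ together with $\vert(F^\delta)'(r)\vert\leq C_\delta(1+\vert r\vert)$ and Gronwall. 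The only differences are cosmetic (order of the two estimates, and deriving the dissipation identity from scratch instead of citing it).
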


\begin{remark}
We observe that, even though the energy $E^{CH}$ is not necessarily non-negative, it is bounded from below. Indeed, there exists $C=C(\theta)$ such that $F \geq C(\theta)$, and therefore 
\begin{align}
    E^{CH}[u] \geq C(\theta) |\Gamma(t)| \geq \dfrac{C(\theta)}{C_\Gamma} |\Gamma_0|,
\end{align}
{\color{black}where $C_\Gamma$ is defined in \eqref{eq:surfacerelation}.} Note that $\tilde C$ (see the previous statement) depends on $T$, $\Gamma_0$ and $\theta$, as well as on an upper bound for the $C^1$-norm of the flow map $\Phi$. 
\end{remark}

\begin{proof}
		Keep $M,\delta$ fixed and denote by $C$ a generic positive constant independent of $M$ and $\delta$. We consider the Galerkin approximation and the results given by Proposition \ref{proposition}. Here we denote the solution by $(u_\delta^M,w_\delta^M)$ to emphasize the dependence on $\delta$.
We then note that the total mass of $u_\delta^M$ is preserved in time, since $\eta\equiv1\in V_M$ for any $M\in\N$.
We then come back to the energy estimate as in the proof of \cite[Prop.5.7, a)]{CaeEll21}:
\begin{align}
\frac{d}{dt}\tilde{E}^{CH}[u_\delta^M]+\frac{1}{2}\Vert\tgrad w_\delta^M\Vert^2\leq -g({\color{black}u_\delta^M},w_\delta^M)+C_0+C_1\tilde{E}^{CH}[u_\delta^M],
\label{enel}
\end{align}
where $\tilde{E}_{CH}:={E}_{CH}+\tilde{C}$, with $\tilde{C}>0$ a suitable constant so that $\tilde{E}_{CH}\geq 0$.
Thanks to the bound on $\mathbf V$, by Young and Poincaré's inequalities we have
\begin{align*}
-g({\color{black}u_\delta^M},w_\delta^M)&\leq C\Vert {\color{black}u_\delta^M}\Vert\Vert w_\delta^M\Vert\leq C\Vert {\color{black}u_\delta^M}\Vert(\vert m(w_\delta^M,1)\vert+\Vert\tgrad w_\delta^M\Vert)\\&\leq \frac 1 4\Vert\tgrad w_\delta^M\Vert^2+C(\Vert {\color{black}u_\delta^M}\Vert^2+\vert m(w_\delta^M,1)\vert^2)\\&\leq \frac 1 4\Vert\tgrad w_\delta^M\Vert^2+C(1+\Vert {\color{black}\tgrad u_\delta^M}\Vert^2+\vert m(w_\delta^M,1)\vert^2),
\end{align*}
{\color{black}where we exploited the fact that, by mass conservation, Poincaré's inequality and \eqref{eq:surfacerelation},  
\begin{align}
\Vert u_\delta^M\Vert&\leq \frac {1}{\vert \Gamma(t)\vert}\vert  m(u_\delta^M,1)\vert+\Vert \tgrad u_\delta^M\Vert\leq C(\vert  m(P_M^0u_0,1)\vert+\Vert \tgrad u_\delta^M\Vert)\leq C(1+\Vert \tgrad u_\delta^M\Vert),
\label{ud}
\end{align}
recalling that $\Vert P_M^0u_0\Vert\leq \Vert u_0\Vert\leq C$.
For further use, this gives
\begin{align}
\frac{d}{dt}\tilde{E}^{CH}[u_\delta^M]+\frac{1}{4}\Vert\tgrad w_\delta^M\Vert^2\leq C_0+C_1\tilde{E}^{CH}[u_\delta^M]+C_2\vert m(w_\delta^M,1)\vert^2,
    \label{energylimit2}
\end{align}
with $C_0,C_1,C_2>0$ independent of $M,\delta$.
Since estimate \eqref{e} cannot be performed in the Galerkin context (note that, differently from the static domain case, here it is not ensured that $\Delta_{\Gamma}u_\delta^M\in L^2_{V_M}$), we now first obtain an estimate depending on $\delta$ but not on $M$, and in a second time, after passing to limit as $M\to\infty$, we obtain estimates uniform in $\delta$, thanks to \eqref{e}. Therefore, we now exploit $\vert F^\prime_\delta(u_\delta^M)\vert\leq C_\delta\vert u_\delta^M\vert$, being ${\color{black}\vert F^{\prime\prime}_\delta\vert}\leq C_\delta$ and $F_\delta^\prime(0)=F^\prime(0)=0$, to deduce
\begin{align}
\vert m(w_\delta^M,1)\vert\leq \vert m(F_\delta^\prime(u_\delta^M),1)\vert \leq C_\delta\Vert u_\delta^M\Vert_{L^1(\Gamma(t))}\leq C_\delta\Vert u_\delta^M\Vert
\label{meanw}
\end{align}
with $C_\delta>0$ depending on $\delta$, so that,} by Young and Poincaré's inequalities, we find 
$$
-g({\color{black}u_\delta^M},w_\delta^M)\leq C\Vert u_\delta^M\Vert\Vert w_\delta^M\Vert\leq C_\delta\left(1+\Vert u_\delta^M\Vert^2\right)+\frac{3}{8}\Vert\tgrad w_\delta^M\Vert^2,
$$
and thus, from \eqref{enel} and \eqref{ud},
\begin{align*}
\frac{d}{dt}\tilde{E}^{CH}[u_\delta^M]+\frac{1}{8}\Vert\tgrad w_\delta^M\Vert^2\leq {\color{black}C_\delta}\tilde{E}^{CH}[u_\delta^M]+C_\delta.
\end{align*}
Therefore Gronwall's Lemma, together with \eqref{ud} and \cite[Lemma 5.6]{CaeEll21} give
\begin{align}
\sup_{t\in [0,T]}\tilde{E}^{CH}[t;u_\delta^M(t)]+\sup_{t\in [0,T]}\Vert u_\delta^M(t)\Vert^2+\int_0^T\Vert\tgrad w_\delta^M\Vert^2dt \leq C_\delta,
\label{energy1}
\end{align}
independently of $M$. {\color{black}Moreover, by Gronwall's Lemma and \cite[Lemma 5.6]{CaeEll21}, applied to \eqref{energylimit2}, we also get \eqref{energylimit}.}
\end{proof}

\subsubsection{Regularisation and strict separation property}
\label{sep}
The main result of this section is the following{\color{black}, letting Assumptions $\textbf{B}_\Phi$ hold.}
\begin{theorem}
	\label{main1}
{\color{black}	Let the assumptions of Theorem \ref{existence} and Assumptions $\textbf{B}_\Phi$ hold.} Denote by $(u,w)$ the (unique) weak solution to \eqref{CH}-\eqref{mu} with $u(0)=u_0$.
	\begin{itemize}
	    \item[(i)] There exists a constant $C=C(T,{E}^{CH}(u_0))>0$ such that, for almost any $t\in[0,T]$,
	\begin{align}
	t\Vert w\Vert_{H^1(\Gamma(t))}^2+\int_0^t s\Vert\partial^\bullet u\Vert^2_{H^1(\Gamma(s))}ds\leq C(T,{E}^{CH}(u_0)).
	\label{mu5}
	\end{align}
	\item[(ii)] For any $0<\tau\leq T$, there exist constants $C=C(T,\tau,{E}^{CH}(u_0))>0$ and $C_p=C_p(T,\tau,p, {E}^{CH}(u_0))>0$ such that, for almost any $t\in[\tau,T]$,
	\begin{align}
	\Vert w\Vert_{H^1(\Gamma(t))}\leq C(T,\tau, {E}^{CH}(u_0)),
	\label{mu4}
	\end{align}
	$$
	\Vert\varphi(u)\Vert_{L^p(\Gamma(t))}+\Vert\varphi^\prime(u)\Vert_{L^p(\Gamma(t))}\leq C_p(T,\tau,p,{E}^{CH}(u_0)),\quad\forall p\in[2,\infty),
	$$
	$$
	\Vert u\Vert_{H^2(\Gamma(t))}\leq C(T,\tau,{E}^{CH}(u_0)).
	$$
	\item[(iii)] There exists $\xi=\xi(T,\tau, {E}^{CH}(u_0))>0$ such that
	\begin{align*}
	\Vert u \Vert_{L^\infty(\Gamma(t))}\leq 1-{\xi}, \quad \text{ for a.a. }t\in[\tau,T].
	\end{align*}
	\end{itemize}
\end{theorem}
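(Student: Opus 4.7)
The plan is to carry out all the estimates at the Galerkin level of Proposition \ref{proposition}, where the extra regularity granted by Assumptions $\textbf{B}_\Phi$ (in particular the $C^2$-in-time character of the coefficient matrices) makes the pair $(u^M_\delta, w^M_\delta)$ classically differentiable in time. Working with the smoothed potential $F^\delta$ gives bounded $F''_\delta, F'''_\delta$, so every quantity below is finite before the limits $M\to\infty,\ \delta\to 0^+$ are taken. Having the uniform-in-$M$ (but $\delta$-dependent) energy estimate of Proposition \ref{energ} at hand, the sole goal at this level will be to produce bounds that are also uniform in $\delta$.

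For \textbf{part (i)}, I would mimic the classical Miranville--Zelik regularisation trick, carried over to the evolving surface by means of Proposition \ref{prop:transport}. Differentiating \eqref{mu2} in the weak time sense (using transport formula (b) for $a_S$ and (a) for $m$), one gets, for every test $\eta\in V_M$,
\begin{align*}
m(\partial^\bullet w^M_\delta,\eta)=a_S(\partial^\bullet u^M_\delta,\eta)+m(F''_\delta(u^M_\delta)\partial^\bullet u^M_\delta,\eta)+\mathcal{R}(\eta),
\end{align*}
where $\mathcal{R}(\eta)$ collects the harmless commutator terms $b(u^M_\delta,\eta)$, $g(F'_\delta(u^M_\delta),\eta)-m(u^M_\delta,\eta)$ coming from the moving frame. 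Testing \eqref{CH2} with $\eta=\partial^\bullet u^M_\delta$ and using the above identity to substitute $a_S(w^M_\delta,\partial^\bullet u^M_\delta)$ yields an inequality of the form
\begin{align*}
\tfrac{d}{dt}\bigl(\tfrac{1}{2}\|\tgrad w^M_\delta\|^2\bigr)+\|\partial^\bullet u^M_\delta\|^2+\!\int_{\Gamma(t)}\!\!F''_\delta(u^M_\delta)\,(\partial^\bullet u^M_\delta)^2\le C\bigl(1+\|\tgrad w^M_\delta\|^2+\|\partial^\bullet u^M_\delta\|\cdot(\cdots)\bigr),
\end{align*}
where the right-hand side depends only on the energy bound \eqref{energylimit}, the $C^0$-bounds \eqref{es1}--\eqref{es2} on $\mathbf V$ and on $B_{adv}$ (via assumption $\textbf{B}_\Phi$), and on the mean of $w^M_\delta$ already controlled through the mass conservation. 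Multiplying by $t$, integrating on $(0,t)$, handling $\int_0^t s\,\tfrac{d}{ds}\|\tgrad w\|^2\,ds$ by parts (the boundary term at $s=0$ vanishes), applying Young's and Gronwall's inequalities, and finally passing to the limits in $M$ and $\delta$ via weak compactness (using \eqref{energylimit} to keep all quadratic terms uniformly integrable) gives \eqref{mu5}.

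For \textbf{parts (ii) and (iii)}, estimate \eqref{mu5} implies that for any $\tau>0$ one can find $t_\star\in(0,\tau)$ where $\|w(t_\star)\|_{H^1}$ and $\|\partial^\bullet u(t_\star)\|$ are finite; restarting the problem at $t_\star$ from a datum $u(t_\star)\in H^2(\Gamma(t_\star))$ with $\varphi(u(t_\star))\in L^p$ (and using the uniqueness part of Theorem \ref{existence}) upgrades \eqref{mu5} to the pointwise bound \eqref{mu4} on $[\tau,T]$. Once $w\in L^\infty_\tau H^1$ is known, I would test $w=-\Delta_\Gamma u+\tfrac{\theta}{2}\varphi(u)-u$ against $|\varphi(u)|^{p-2}\varphi(u)$, use $\varphi'\ge 0$ to absorb the resulting $(p-1)\int|\varphi|^{p-2}\varphi'(u)|\tgrad u|^2\ge 0$ term, and obtain $\|\varphi(u)\|_{L^p(\Gamma(t))}\le C\sqrt{p}\,(1+\|w\|_{H^1})$; elliptic regularity for the Laplace--Beltrami operator on the closed surface $\Gamma(t)$ then produces the $H^2$-bound. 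The $L^p$-estimate with $\sqrt p$ growth is exactly the hypothesis for the Moser--Trudinger inequality on compact Riemannian manifolds (Appendix \ref{app:prelimtools}) to yield $e^{\lambda\varphi(u)^2}\in L^1(\Gamma(t))$ uniformly in $t\in[\tau,T]$. Combined with the key growth condition \eqref{explog} (i.e. $\varphi'(s)\le e^{C|\varphi(s)|+C}$), this gives $\varphi'(u)\in L^r$ for every $r\ge 1$, so that one can bootstrap into the equation via a De Giorgi / Alikakos-type iteration on $|\varphi(u)|^{p}$ letting $p\to\infty$, and obtain $\|\varphi(u)\|_{L^\infty(\Gamma(t))}\le C$ on $[\tau,T]$. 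Since $\varphi$ blows up as $|s|\to 1$, a uniform $L^\infty$-bound on $\varphi(u)$ is equivalent to the separation property \eqref{intro:sep}. The main technical obstacle will be keeping every constant \emph{uniform in $t$}: this forces one to use the area equivalence \eqref{equiv}, \eqref{eq:surfacerelation} and the continuity in $t$ of the Moser--Trudinger constant on $\Gamma(t)$ (provided the compact manifolds $\Gamma(t)$ vary $C^2$-smoothly), which is precisely what Assumptions $\textbf{B}_\Phi$ and Appendix \ref{app:prelimtools} are designed to deliver.
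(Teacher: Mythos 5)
Your architecture for part (i) is the paper's: differentiate the chemical-potential equation in time at the Galerkin level via Proposition \ref{prop:transport}, test \eqref{CH2} with $\partial^\bullet w^M_\delta$, combine, multiply by $t$, and apply Gronwall. However, two items you file under ``harmless'' are precisely where the proof can fail. First, the term $g((F^\delta)'(u^M_\delta),\partial^\bullet u^M_\delta)$ is not controllable uniformly in $\delta$ by the energy: the energy only bounds $\Vert F^\delta_{ln}(u^M_\delta)\Vert_{L^1(\Gamma(t))}$, not $\Vert (F^\delta)'(u^M_\delta)\Vert_{L^2(\Gamma(t))}$, so the paper must absorb this term by adding $\tfrac{\theta}{2}g(F^\delta_{ln}(u^M_\delta),1)$ into the functional $\mathcal{Q}^M_\delta$ and invoking the transport identity \eqref{Fdelta}. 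Second, the mean $m(w^M_\delta,1)$ is \emph{not} ``already controlled through mass conservation'' uniformly in $\delta$ at the Galerkin level: the uniform control \eqref{w} rests on the identity \eqref{e}, which requires $\Delta_\Gamma u_\delta\in L^2(\Gamma(t))$ and is therefore only available after the limit $M\to\infty$. This forces the two-step limit of the paper (first $M\to\infty$ with $\delta$-dependent constants keeping \eqref{passage5} in reserve, then the uniform-in-$\delta$ bounds \eqref{unifcontrol}, \eqref{w}, \eqref{ultimate2}, then $\delta\to0$); a single ``weak compactness'' passage as you describe would not close. A further technical point you assume implicitly: since $\Vert\partial^\bullet u^M_\delta\Vert_{L^2_{H^{-1}}}$ is unavailable in the Galerkin setting, the quadratic term $Kt\Vert\partial^\bullet u^M_\delta\Vert^2$ cannot be handled by interpolation and must be absorbed by testing \eqref{CH2} with $t\partial^\bullet u^M_\delta$, as the paper does.

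For parts (ii)--(iii) you take a genuinely different, and essentially valid, route. The paper tests \eqref{mu} with $\left\vert\tfrac{\theta}{2}\varphi(u_k)\right\vert^{p-2}\tfrac{\theta}{2}\varphi(u_k)$ and with $\tfrac{\theta}{2}\varphi(u_k)e^{L\tfrac{\theta}{2}\vert\varphi(u_k)\vert}$ (where $u_k=h_k(u)$ is the truncation \eqref{hk}, needed to make these admissible test functions --- a step your sketch omits), applies the Moser--Trudinger inequality to $\widehat w\in H^1(\Gamma(t))$ together with \eqref{explog} to get $\varphi'(u)\in L^p$ for every $p$, and concludes by the chain rule and $W^{1,3}(\Gamma(t))\hookrightarrow L^\infty(\Gamma(t))$. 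You instead track the $\sqrt p$ growth of $\Vert\varphi(u)\Vert_{L^p}$, deduce $e^{\lambda\varphi(u)^2}\in L^1$, recover $\varphi'(u)\in L^r$ from \eqref{explog}, and finish with an Alikakos iteration; this is the Gal--Giorgini--Grasselli-style argument and it buys a quantitative $L^p$ constant, but the final iteration is superfluous (and delicate to make $t$-uniform on the evolving surface): once $\varphi'(u)\in L^{2p}$ and $u\in H^2\hookrightarrow W^{1,2p}$, the chain rule already yields $\varphi(u)\in W^{1,3}\hookrightarrow L^\infty$, whence separation. Likewise the restart at $t_\star$ is unnecessary: \eqref{mu5} gives $\Vert w(t)\Vert^2_{H^1(\Gamma(t))}\le C/\tau$ directly for a.a.\ $t\in[\tau,T]$, which is \eqref{mu4}.
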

\begin{remark}
	\label{sepp}
	Notice that actually we could say more about the separation property, if we assume the regularity on the flow map and the velocity field as in Lemma \ref{C1}. Indeed, for any $\tau>0$, we have $u\in L^\infty_{H^2}(\tau,T)$ and $\partial^\bullet u\in L^2_{H^1}(\tau,T)$ (we use the symbol $L^q_{X}(\tau,T)$ to mean that the set of times is $[\tau,T]$ instead of $[0,T]$, typical of $L^q_{X}$). By the embedding result shown in Appendix \ref{app:embedding}, we infer that $u\in C^0_{H^{3/2}}(\tau,T)$, thus by the embedding $H^{3/2}(\Gamma(t))\hookrightarrow C^0(\Gamma(t))$, $u\in C^0_{C^0}(\tau,T)$, implying 
		\begin{align*}
	\sup_{t\in[\tau,T]}\Vert u \Vert_{C^0(\Gamma(t))}\leq 1-{\xi}.
	\end{align*}
\end{remark}
\begin{proof}
From now on, we will omit the dependence on ${E}^{CH}(u_0)$, since it is considered understood due to Prop. \ref{energ}. 

\underline{Part (i).} {\color{black}\textbf{Step 1. Limit as $M\to\infty$.}} We test \eqref{CH2} with $\eta=\partial^\bullet w^M_\delta\in L^2_{V_M}$, obtaining
	\begin{align}
	m(\partial^\bullet u^M_\delta, \partial^\bullet w^M_\delta)+g({\color{black}u^M_\delta},\partial^\bullet w^M_\delta)+a_N(u^M_\delta,\partial^\bullet w^M_\delta)+a_S(w^M_\delta,\partial^\bullet w^M_\delta)=0.
	\label{base}
	\end{align}
	Observe now that, by Proposition \ref{prop:transport}, for $\eta\in L^2_{V_M}$ such that $\partial^\bullet\eta\in L^2_{V_M}$, we have
	\begin{align}
\label{m1}
	\frac{d}{dt}m(w^M_\delta,\eta)=m(\partial^\bullet w^M_\delta,\eta)+m(w^M_\delta,\partial^\bullet\eta)+g(w^M_\delta,\eta),
	\end{align}
	but also (see \eqref{mu2})
	\begin{align}
\label{m2}
	\frac{d}{dt}m(w^M_\delta,\eta)=\frac{d}{dt}\left(a_S(u^M_\delta,\eta)+m((F^\delta)^\prime(u^M_\delta),\eta)\right)
	\end{align}
	Moreover, again by Proposition \ref{prop:transport}, we infer
	$$
	\frac{d}{dt}a_S(u^M_\delta,\eta)=a_S(\partial^\bullet u^M_\delta,\eta)+a_S(u^M_\delta,\partial^\bullet\eta)+b(u^M_\delta,\eta).
	$$
	On the other hand, exploiting the chain rule, we get
	\begin{align*}
	\frac{d}{dt}m((F^\delta)^\prime(u^M_\delta),\eta)=m((F^\delta)^{\prime\prime}(u^M_\delta)\partial^\bullet u^M_\delta,\eta)&+m((F^\delta)^\prime(u^M_\delta),\partial^\bullet\eta)+g((F^\delta)^\prime(u^M_\delta),\eta).
	\end{align*}
	Therefore, comparing \eqref{m1} with \eqref{m2}, we obtain
	\begin{align*}
	m(&\partial^\bullet w^M_\delta,\eta)+m(w^M_\delta,\partial^\bullet\eta)+g(w^M_\delta,\eta) 
	=
	m((F^\delta)^{\prime\prime}(u^M_\delta)\partial^\bullet u^M_\delta,\eta)\\
	&+m((F^\delta)^\prime(u^M_\delta),\partial^\bullet\eta)+g((F^\delta)^\prime(u^M_\delta),\eta)+a_S(\partial^\bullet u^M_\delta,\eta)+a_S(u^M_\delta,\partial^\bullet\eta)+b(u^M_\delta,\eta),
	\end{align*}
	but since $\partial^\bullet\eta$ is still an admissible function in \eqref{mu2}, we infer
	\begin{align*}
	m(\partial^\bullet w^M_\delta,\eta)&=-g(w^M_\delta,\eta)+m((F^\delta)^{\prime\prime}(u^M_\delta)\partial^\bullet u^M_\delta,\eta)+g((F^\delta)^{\prime}(u^M_\delta),\eta)+a_S(\partial^\bullet u^M_\delta,\eta)+b(u^M_\delta,\eta).
	\end{align*}
	Thus, setting $\eta=\partial^\bullet u_\delta^M\in L^2_{V_M}$ (note that $\partial^{\bullet,2} u_\delta^M\in L^2_{V_M}$ by Proposition \ref{proposition}), we get
		\begin{align}
	m(\partial^\bullet w^M_\delta,\partial^\bullet u^M_\delta)&=-g(w^M_\delta,\partial^\bullet u^M_\delta)+m((F^\delta)^{\prime\prime}(u^M_\delta)\partial^\bullet u^M_\delta,\partial^\bullet u^M_\delta)\nonumber\\&+g((F^\delta)^{\prime}(u^M_\delta),\partial^\bullet u^M_\delta)+a_S(\partial^\bullet u^M_\delta,\partial^\bullet u^M_\delta)+b(u^M_\delta,\partial^\bullet u^M_\delta).
	\label{e1}
	\end{align}
Using once more Proposition \ref{prop:transport}, on account of the regularity given by Proposition \ref{proposition}, we obtain
	\begin{align}
	\frac{1}{2}\frac{d}{dt}\Vert\tgrad w^M_\delta\Vert^2=a_S(w^M_\delta,\partial^\bullet w^M_\delta)+\frac{1}{2}b(w^M_\delta,w^M_\delta).
	\label{aS}
	\end{align}
	Furthermore, consider the term $g$ in \eqref{base}. By Proposition \ref{prop:transport}, we have
	\begin{align*}
	\frac{d}{dt}g({\color{black}u^M_\delta},w^M_\delta)&=g({\color{black}u^M_\delta},\partial^\bullet w^M_\delta)+m(\partial^\bullet({\color{black}u^M_\delta} \tgrad \cdot \mathbf{V}), w^M_\delta) +g({\color{black}u^M_\delta}\tgrad \cdot \mathbf{V}, w^M_\delta)\\
	&=g({\color{black}u^M_\delta},\partial^\bullet w^M_\delta)+m(\partial^\bullet u^M_\delta \tgrad \cdot \mathbf{V}, w^M_\delta)+m({\color{black}u^M_\delta} \partial^\bullet\tgrad \cdot \mathbf{V}, w^M_\delta)+g({\color{black}u^M_\delta}\tgrad \cdot \mathbf{V}, w^M_\delta).
	\end{align*}
	Therefore we get
	\begin{align}
	g({\color{black}u^M_\delta},\partial^\bullet w^M_\delta)=\frac{d}{dt}&g({\color{black}u^M_\delta},w^M_\delta)-m(\partial^\bullet u_\delta^M \tgrad \cdot \mathbf{V}, w^M_\delta)-m( {\color{black}u^M_\delta} \partial^\bullet\tgrad \cdot \mathbf{V}, w^M_\delta)-g({\color{black}u^M_\delta}\tgrad \cdot \mathbf{V}, w^M_\delta).
	\label{g}
	\end{align}
	Recalling \eqref{der}, we have 
	\begin{align*}
\frac{d}{dt}a_N(u^M_\delta,w^M_\delta) =a_N(\partial^\bullet u^M_\delta,&w^M_\delta)+a_N(u^M_\delta,\partial^\bullet w^M_\delta)+\int_{\Gamma(t)}B_{adv}(\mathbf{V}_a^\tau(t),\mathbf{V}(t))u^M_\delta\cdot \tgrad w^M_\delta,
	\end{align*}
	that is,
	\begin{align}
	a_N(u^M_\delta,\partial^\bullet w^M_\delta) = \frac{d}{dt}a_N(u^M_\delta,&w^M_\delta)-a_N(\partial^\bullet u^M_\delta,w^M_\delta)-\int_{\Gamma(t)}B_{adv}(\mathbf{V}_a^\tau(t),\mathbf{V}(t))u^M_\delta\cdot \tgrad w^M_\delta.\label{aN}
	\end{align}
	Thanks to Proposition \ref{prop:transport}, we deduce
	\begin{align}
	\frac{d}{dt} \frac{\theta}{2}g(F_{ln}^\delta(u^M_\delta),1)=\frac{\theta}{2}g(\varphi_\delta(u^M_\delta),\partial^\bullet u^M_\delta)+\frac{\theta}{2}\int_{\Gamma(t)}&F_{ln}^\delta(u^M_\delta)\partial^\bullet \tgrad \cdot \mathbf{V}+\frac{\theta}{2}g(F_{ln}^\delta(u^M_\delta), \tgrad \cdot \mathbf{V}).
	\label{Fdelta}
	\end{align}
	This last equality is necessary, since in the Galerkin setting we are not able to retrieve a uniform estimate for $\Vert\varphi_\delta(u_\delta^M)\Vert_{L^2_{L^2}}$ but only for its $L^2$-orthogonal projection over $V_M$(t), whereas, thanks to \eqref{energy1}, we are able to uniformly estimate $\sup_{t\in[0,T]}\Vert F^\delta_{ln}(u_\delta^M)\Vert_{L^1(\Gamma(t))}$.
	Collecting \eqref{base}-\eqref{aN}, we obtain
	\begin{align*} \frac{d}{dt}\Bigg(\frac{1}{2}&\Vert\tgrad w^M_\delta\Vert^2+a_N(u^M_\delta,w^M_\delta)+g({\color{black}u_\delta^M},w^M_\delta)\Bigg)
+\Vert\tgrad \partial^\bullet u^M_\delta\Vert^2\nonumber\\
&\hskip 6mm +m((F^\delta)^{\prime\prime}(u^M_\delta)\partial^\bullet u^M_\delta,\partial^\bullet u^M_\delta)\nonumber\\
&=g(w^M_\delta,\partial^\bullet u^M_\delta)-g((F^\delta)^{\prime}(u^M_\delta),\partial^\bullet u^M_\delta)-b(u^M_\delta,\partial^\bullet u^M_\delta)+\frac{1}{2}b(w^M_\delta,w^M_\delta) \nonumber\\
&\hskip 6mm +m(\partial^\bullet u^M_\delta \tgrad \cdot \mathbf{V}, w^M_\delta)+m( {\color{black}u_\delta^M} \partial^\bullet\tgrad \cdot \mathbf{V}, w^M_\delta)\nonumber\\
&\hskip 6mm  +g({\color{black}u_\delta^M}\tgrad \cdot \mathbf{V}, w^M_\delta)+a_N(\partial^\bullet u^M_\delta,w^M_\delta)\nonumber\\
&\hskip 6mm +\int_{\Gamma(t)}B_{adv}(\mathbf{V}_a^\tau(t),\mathbf{V}(t))u^M_\delta\cdot \tgrad w^M_\delta.
	\end{align*}
	In particular, substituting the explicit value of $F^\delta$, setting $\varphi_\delta:=(F^\delta_{ln})^\prime$, and exploiting \eqref{Fdelta}, we find
	 \begin{align*}
	&\frac{d}{dt}\left(\frac{1}{2}\Vert\tgrad w^M_\delta\Vert^2+a_N(u^M_\delta,w^M_\delta)+g(u^M_\delta,w^M_\delta)+\frac{\theta}{2}g(F_{ln}^\delta(u^M_\delta),1)\right)\\&+\Vert\tgrad \partial^\bullet u^M_\delta\Vert^2+\frac{\theta}{2}m(\varphi_\delta^\prime(u^M_\delta)\partial^\bullet u^M_\delta,\partial^\bullet u^M_\delta)\\&=g(w^M_\delta,\partial^\bullet u^M_\delta)+m(\partial^\bullet u^M_\delta,\partial^\bullet u^M_\delta)+g(u^M_\delta,\partial^\bullet u^M_\delta)-b(u^M_\delta,\partial^\bullet u^M_\delta)\\&+\frac{1}{2}b(w^M_\delta,w^M_\delta)+m(\partial^\bullet u^M_\delta \tgrad \cdot \mathbf{V}, w^M_\delta)+m({\color{black}u_\delta^M} \partial^\bullet\tgrad \cdot \mathbf{V}, w^M_\delta)\\&+g({\color{black}u_\delta^M}\tgrad \cdot \mathbf{V}, w^M_\delta)+a_N(\partial^\bullet u^M_\delta,w^M_\delta)+\int_{\Gamma(t)}B_{adv}(\mathbf{V}_a^\tau(t),\mathbf{V}(t))u^M_\delta\cdot \tgrad w^M_\delta\\&+\frac{\theta}{2}\int_{\Gamma(t)}F_{ln}^\delta(u^M_\delta)\partial^\bullet \tgrad \cdot \mathbf{V}+\frac{\theta}{2}g(F_{ln}^\delta(u^M_\delta), \tgrad \cdot \mathbf{V}).
	\end{align*}
	Recalling that $\varphi_\delta^\prime=(F^\delta_{ln})''\geq 0$, we have $\frac{\theta}{2}m(\varphi_\delta^\prime(u^M_\delta)\partial^\bullet u^M_\delta,\partial^\bullet u^M_\delta)\geq 0$.
	Therefore, setting
\begin{align}
\mathcal{Q}^M_\delta:=\frac{1}{2}\Vert\tgrad w^M_\delta\Vert^2+a_N(u^M_\delta,w^M_\delta)+g({\color{black}u_\delta^M},w^M_\delta)
+\frac{\theta}{2}g(F_{ln}^\delta(u^M_\delta),1),
\label{ees}
\end{align}
and multiplying by $t\in (0,T]$ the above inequality, we get
			 \begin{align*}
	&{\color{black}\frac{d}{dt}\left(t\mathcal{Q}^M_\delta(t)\right)}+t\Vert\tgrad \partial^\bullet u^M_\delta\Vert^2+t\frac{\theta}{2}m(\varphi_\delta^\prime(u^M_\delta)\partial^\bullet u^M_\delta,\partial^\bullet u^M_\delta)\\&=\mathcal{Q}^M_\delta(t)+g(w^M_\delta,t\partial^\bullet u^M_\delta)+tm(\partial^\bullet u^M_\delta,\partial^\bullet u^M_\delta)\\&+g(u^M_\delta,t\partial^\bullet u^M_\delta)-b(u^M_\delta,t\partial^\bullet u^M_\delta)+\frac{t}{2}b(w^M_\delta,w^M_\delta)\\&+m(\partial^\bullet u^M_\delta \tgrad \cdot \mathbf{V}, tw^M_\delta)+m( {\color{black}u_\delta^M} \partial^\bullet\tgrad \cdot \mathbf{V}, tw^M_\delta)\\&+g({\color{black}u_\delta^M}\tgrad \cdot \mathbf{V}, tw^M_\delta)+a_N(\partial^\bullet u^M_\delta,tw^M_\delta)+t\int_{\Gamma(t)}B_{adv}(\mathbf{V}_a^\tau(t),\mathbf{V}(t))u^M_\delta\cdot \tgrad w^M_\delta\\&+\frac{\theta t}{2}\int_{\Gamma(t)}F_{ln}^\delta(u^M_\delta)\partial^\bullet \tgrad \cdot \mathbf{V}+\frac{\theta t}{2}g(F_{ln}^\delta(u^M_\delta), \tgrad \cdot \mathbf{V}).
	\end{align*}
	{\color{black}We now recall that, by Poincaré's inequality,
		\begin{align}
	\label{w_bis}
	\Vert w^M_\delta\Vert\leq C(\Vert \tgrad  w^M_\delta\Vert+\vert m(w_\delta^M,1)\vert).
	\end{align}}
	Then, on account of \eqref{V} and using H\"older's and Young's inequalities, we deduce
	\begin{align*}
	&g(w^M_\delta,t\partial^\bullet u^M_\delta)+m(\partial^\bullet u^M_\delta,t\partial^\bullet u^M_\delta)\\&+g(u^M_\delta,t\partial^\bullet u^M_\delta)-b(u^M_\delta,t\partial^\bullet u^M_\delta)+\frac{t}{2}b(w^M_\delta,w^M_\delta)\\&{\color{black}+}\frac{\theta t}{2}\int_{\Gamma(t)}F_{ln}^\delta(u^M_\delta)\partial^\bullet \tgrad \cdot \mathbf{V}+\frac{\theta t}{2}g(F_{ln}^\delta(u^M_\delta), \tgrad \cdot \mathbf{V})\\&\leq Ct\Vert w^M_\delta\Vert\Vert\partial^\bullet u^M_\delta\Vert
	+t\Vert\partial^\bullet u^M_\delta\Vert^2\\&+t{\color{black}\Vert u^M_\delta\Vert}\Vert \partial^\bullet u^M_\delta\Vert+Ct\Vert\tgrad  u_\delta^M\Vert\Vert\tgrad \partial^\bullet u^M_\delta\Vert+Ct\Vert\tgrad w^M_\delta\Vert^2\\&+Ct\Vert F_{ln}^\delta(u_\delta^M)\Vert_{L^1(\Gamma(t))}+Ct\Vert F_{ln}^\delta(u_\delta^M)\Vert_{L^1(\Gamma(t))}\\&\leq Ct\Vert \partial^\bullet u^M_\delta\Vert^2+Ct\Vert\tgrad w^M_\delta\Vert^2+\frac{t}{2}\Vert\tgrad \partial^\bullet u^M_\delta\Vert^2\\&+Ct(\Vert u^M_\delta\Vert^2+\vert m(w_\delta^M,1)\vert^2+\Vert F_{ln}^\delta(u_\delta^M)\Vert_{L^1(\Gamma(t))}),
	\end{align*}
	where we exploited \eqref{es2} and \eqref{w_bis}.
Analogously, recalling \eqref{es1}, \eqref{es2} and \eqref{w_bis}, thanks to Cauchy-Schwarz and Young's inequalities, we find
	\begin{align*}
	&m(\partial^\bullet u^M_\delta \tgrad \cdot \mathbf{V}, tw^M_\delta)+m( {\color{black}u_\delta^M} \partial^\bullet\tgrad \cdot \mathbf{V}, tw^M_\delta)\\&+g({\color{black}u_\delta^M}\tgrad \cdot \mathbf{V}, tw^M_\delta)+a_N(\partial^\bullet u^M_\delta,tw^M_\delta)+t\int_{\Gamma(t)}B_{adv}(\mathbf{V}_a^\tau(t),\mathbf{V}(t))u^M_\delta\cdot \tgrad w^M_\delta\\&\leq Ct\Vert\partial^\bullet u^M_\delta\Vert({\color{black}\vert m(w_\delta^M,1)\vert}+\Vert \tgrad  w^M_\delta\Vert)+Ct{\color{black}\Vert u^M_\delta\Vert}({\color{black}\vert m(w_\delta^M,1)\vert}+\Vert \tgrad  w^M_\delta\Vert)\\&+Ct{\color{black}\Vert u^M_\delta\Vert}({\color{black}\vert m(w_\delta^M,1)\vert}+\Vert \tgrad  w^M_\delta\Vert)+Ct\Vert\partial^\bullet u^M_\delta\Vert\Vert\tgrad  w^M_\delta\Vert+Ct\Vert u^M_\delta\Vert\Vert \tgrad  w^M_\delta\Vert\\&\leq Ct\Vert\tgrad w^M_\delta\Vert^2+Ct\Vert\partial^\bullet u^M_\delta\Vert^2+Ct(1+{\color{black}\vert m(w_\delta^M,1)\vert^2}+\Vert u^M_\delta\Vert^2)
	\end{align*}
	Collecting the above inequalities, we eventually obtain
	\begin{align}
    \nonumber
	{\color{black}\frac{d}{dt}\left(t\mathcal{Q}^M_\delta(t)\right)}+\frac{t}{2}\Vert\tgrad \partial^\bullet u^M_\delta\Vert^2&\leq \mathcal{Q}^M_\delta(t)+Ct\Vert\tgrad w^M_\delta\Vert^2+Kt\Vert\partial^\bullet u^M_\delta\Vert^2\\&{\color{black}+Ct(\Vert u^M_\delta\Vert^2+\vert m(w_\delta^M,1)\vert^2+\Vert F_{ln}^\delta(u_\delta^M)\Vert_{L^1(\Gamma(t))})},
	\label{passage}
	\end{align}
	where we explicitly denoted by $K>0$, for further use, the constant in front of $\Vert\partial^\bullet u^M_\delta\Vert$, which has to be controlled in a uniform way. Since in the Galerkin setting we are not able to find a uniform estimate for $\Vert \partial^\bullet u^M_\delta\Vert_{L^2_{H^{-1}}}$, we cannot exploit the classical interpolation inequality  $\Vert \partial^\bullet u_\delta^M\Vert	\leq C\Vert \partial^\bullet u^M_\delta\Vert_{H^{-1}(\Gamma(t))}^{1/2}\Vert\partial^\bullet u^M_\delta\Vert_{H^1(\Gamma(t))}^{1/2}$ as in the case of nonevolving surfaces.
	Therefore, we test \eqref{CH2} with $\eta=t\partial^\bullet u_\delta^M\in L^2_{V_M}$, obtaining
	$$
	t\Vert \partial^\bullet u^M_\delta\Vert^2+tg({\color{black}u^M_\delta},\partial^\bullet u^M_\delta)+ta_N(u^M_\delta,\partial^\bullet u^M_\delta)+ta_S(w^M_\delta,\partial^\bullet u^M_\delta)=0,
	$$
	from which, by standard estimates, for some $\kappa$ sufficiently small to be chosen later on, we obtain
	$$t\Vert \partial^\bullet u^M_\delta\Vert^2\leq \frac{t}{2}\Vert \partial^\bullet u^M_\delta\Vert^2+t\kappa\Vert\tgrad  \partial^\bullet u^M_\delta\Vert^2+Ct\Vert\tgrad w_\delta^M\Vert^2+Ct{\color{black}\Vert u^M_\delta\Vert^2}, $$	
that is,
$$
\frac{t}{2}\Vert \partial^\bullet u^M_\delta\Vert^2\leq t\kappa\Vert\tgrad  \partial^\bullet u^M_\delta\Vert^2+Ct\Vert\tgrad w_\delta^M\Vert^2+Ct{\color{black}\Vert u^M_\delta\Vert^2}.
$$
We can now add this inequality multiplied by $\omega=4K$  to \eqref{passage}, choosing $\kappa=\frac{1}{16K}$. {\color{black}This gives, for all $t\in [0,T]$,
	\begin{align}
	{\frac{d}{dt}\left(t\mathcal{Q}^M_\delta(t)\right)}+\frac{t}{4}\Vert\tgrad \partial^\bullet u^M_\delta\Vert^2+{K t}\Vert \partial^\bullet u^M_\delta\Vert^2&\leq \mathcal{Q}^M_\delta(t)+Ct\Vert\tgrad w^M_\delta\Vert^2\nonumber+Ct(\Vert u^M_\delta\Vert^2\\&+\vert m(w_\delta^M,1)\vert^2+\Vert F_{ln}^\delta(u_\delta^M)\Vert_{L^1(\Gamma(t))}).
	\label{passage2}
	\end{align}
Recalling Proposition \ref{energ} and \eqref{meanw}, from \eqref{passage2} we get, for all $t\in [0,T]$
	\begin{align}
	{\frac{d}{dt}\left(t\mathcal{Q}^M_\delta(t)\right)}+\frac{t}{4}\Vert\tgrad \partial^\bullet u^M_\delta\Vert^2+{K t}\Vert \partial^\bullet u^M_\delta\Vert^2&\leq \mathcal{Q}^M_\delta(t)+Ct\Vert\tgrad w^M_\delta\Vert^2+C_\delta t,
	\label{pp1}
	\end{align}
 where $C_\delta>0$ depends on $\delta$.
}
Observe now that 
	\begin{align}
	    \label{e1a}
	\vert a_N(u^M_\delta,w^M_\delta)\vert\leq C\Vert u_\delta^M\Vert \Vert\tgrad  w^M_\delta\Vert\leq C\Vert u_\delta^M\Vert^2+\frac{1}{8}\Vert\tgrad w^M_\delta\Vert^2,
	\end{align}
	and, by Poincaré's inequality,
	\begin{align}
	\vert g(u^M_\delta,w^M_\delta)\vert\leq C\Vert u_\delta^M\Vert \Vert w_\delta^M\Vert\leq C(\Vert u_\delta^M\Vert^2+\vert m(w_\delta^M,1)\vert^2)+\frac{1}{8}\Vert\tgrad w^M_\delta\Vert^2,
	\label{e2a}
	\end{align}
	so that (see Proposition \ref{energ}, \eqref{meanw} and \eqref{w_bis})
	$$
	\vert g(u^M_\delta,w^M_\delta)\vert\leq C_\delta+\frac{1}{8}\Vert\tgrad w^M_\delta\Vert^2,
	$$
	Moreover, owing to \eqref{energy1}, we have
	$$
	\frac{\theta}{2}\vert g(F_{ln}^\delta(u^M_\delta),1)\vert\leq C\Vert F_{ln}^\delta(u^M_\delta)\Vert_{L^1(\Gamma(t))}\leq C_\delta.
	$$
	implying that there exists $\widehat{C}{\color{black}_\delta}>0$, depending on $\delta$, such that
	$$
	\frac{1}{4}\Vert \tgrad  w^M_\delta\Vert^2-\widehat{C}{\color{black}_\delta}\leq   \mathcal{Q}^M_\delta(t)\leq C{\color{black}_\delta}(1+\Vert \tgrad  w^M_\delta\Vert^2),\quad\forall\,t\in [0,T].
	$$
	Note now that, due to the energy estimate \eqref{energy1}, we have that $\mathcal{Q}^M_\delta$ is uniformly bounded in $L^1(0,T)$.
	Thus, defining $\widetilde{\mathcal{Q}}^M_\delta:=\mathcal{Q}_\delta^M+\widehat{C}{\color{black}_\delta}\geq 0$, we obtain from \eqref{pp1}
	\begin{align}
{\color{black}\frac{d}{dt}\left(t\widetilde{\mathcal{Q}}^M_\delta(t)\right)}+{K t}\Vert \partial^\bullet u^M_\delta\Vert^2+\frac{t}{4}\Vert\tgrad \partial^\bullet u^M_\delta\Vert^2 \leq \widetilde{\mathcal{Q}}^M_\delta(t)+C{\color{black}_\delta}t\widetilde{\mathcal{Q}}^M_\delta(t)+{C}{\color{black}_\delta}t.
\label{2}
\end{align}
Therefore, by Gronwall's Lemma applied to $y(t)=t\widetilde{\mathcal{Q}}^M_\delta(t)$, recalling that $\widetilde{\mathcal{Q}}^M_\delta$ is uniformly {\color{black}(in $M$)} bounded in $L^1(0,T)$, we infer
	$$
	t\widetilde{\mathcal{Q}}^M_\delta(t)\leq C{\color{black}_\delta}(T),\quad \forall\,t\in[0,T],
	$$
	entailing, recalling \eqref{w_bis},
	\begin{align}
	t\Vert w^M_\delta\Vert_{H^1(\Gamma(t))}^2+\int_0^t s\Vert\partial^\bullet u^M_\delta\Vert^2_{H^1(\Gamma(s))}ds\leq C{\color{black}_\delta}(T), \quad
    \forall\,t\in[0,T],
    \label{pp2}
	\end{align}
 {\color{black}where $C_\delta(T)>0$ depends on $\delta$ but is independent of $M$.}
{\color{black}
	For further use we also integrate in time \eqref{passage2}, to get,
	for all $t\in [0,T]$,
	\begin{align}
	&\nonumber{t\mathcal{Q}^M_\delta(t)}+\int_0^t\frac{s}{4}\Vert\tgrad \partial^\bullet u^M_\delta\Vert^2ds+ \int_0^t{K s}\Vert \partial^\bullet u^M_\delta\Vert^2ds\\&\leq \int_0^t\mathcal{Q}^M_\delta(s)ds+C\int_0^t s\Vert\tgrad w^M_\delta\Vert^2ds\nonumber\\&+C\int_0^ts(\Vert u^M_\delta\Vert^2+\vert m(w_\delta^M,1)\vert^2+\Vert F_{ln}^\delta(u_\delta^M)\Vert_{L^1(\Gamma(t))})ds.
	\label{passage3}
	\end{align}
Then by the definition \eqref{ees} of $\mathcal{Q}_\delta^M$, \eqref{e1a} and \eqref{e2a}, we also deduce that, for any $t\in[0,T]$, 
	\begin{align*}
	\int_0^t \mathcal{Q}_\delta^M(s)ds&\leq C(T)\int_0^T\left(\tilde{E}^{CH}[t;u_\delta^M(t)]+\Vert u_\delta^M\Vert^2\right)dt+C(T)\int_0^T\Vert \tgrad w_\delta^M\Vert^2 dt\\&\leq C(T)\left(1+\int_0^T\vert m(w_\delta^M,1)\vert^2dt\right),
	\end{align*}
	having applied \eqref{energylimit}, \eqref{ud} and the conservation of mass. This, together with \eqref{passage3}, gives 	\begin{align}
	{t\mathcal{Q}^M_\delta(t)}+\int_0^t\frac{s}{4}\Vert\tgrad \partial^\bullet u^M_\delta\Vert^2ds+ \int_0^t{K s}\Vert \partial^\bullet u^M_\delta\Vert^2ds\leq C(T)\left(1+\int_0^T\vert m(w_\delta^M,1)\vert^2dt\right),
	\label{passage4}
	\end{align}
	where again we applied \eqref{energylimit} and \eqref{ud} for the last summands in the right-hand side of \eqref{passage3}. Note also that, by the definition \eqref{ees} of $\mathcal{Q}_\delta^M$, from \eqref{passage4} we infer	\begin{align*}&\nonumber\frac t 2\Vert \tgrad w_\delta^M\Vert^2+\int_0^t\frac{s}{4}\Vert\tgrad \partial^\bullet u^M_\delta\Vert^2ds+ \int_0^t{K s}\Vert \partial^\bullet u^M_\delta\Vert^2ds\\&\leq C(T)\left(1+\int_0^T\vert m(w_\delta^M,1)\vert^2dt\right)\nonumber\\&\nonumber-ta_N(u^M_\delta,w^M_\delta)-tg(u_\delta^M,w^M_\delta)
-\frac{\theta t}{2}g(F_{ln}^\delta(u^M_\delta),1)\nonumber\\&\leq
C(T)\left(1+\int_0^T\vert m(w_\delta^M,1)\vert^2dt\right)+\frac{t}{4}\Vert \tgrad w_\delta^M\Vert^2+C(T)(1+\tilde{E}^{CH}[t;u_\delta^M]),	\end{align*}
due to \eqref{ud}, \eqref{e1a} and \eqref{e2a}. This implies, by \eqref{energylimit},
\begin{align}	
\frac t 4\Vert \tgrad w_\delta^M\Vert^2+\int_0^t\frac{s}{4}\Vert\tgrad \partial^\bullet u^M_\delta\Vert^2ds+ \int_0^t{K s}\Vert \partial^\bullet u^M_\delta\Vert^2ds\leq C(T)\left(1+\int_0^T\vert m(w_\delta^M,1)\vert^2dt\right),	\label{passage5}	\end{align}
for any $t\in[0,T]$, where $C(T)$ is independent of $M,\delta$.
By standard compactness arguments, exploiting the regularity we have just obtained, arguing as in \cite[Sec.4.2]{CaeEll21}, we can pass to the limit as $M\to\infty$ and obtain the existence of a couple $(u_\delta,w_\delta)$ satisfying for any $\eta\in L^2_{H^1}$ and all $t\in[0,T]$,
	\begin{align}
&m_\star(\partial^\bullet u_\delta, \eta)+g(u_\delta,\eta)+a_N(u_\delta,\eta)+a_S(w_\delta,\eta)=0,\label{CH2_b}\\&
a_S(u_\delta,\eta)+m((F^\delta)^\prime(u_\delta),\eta)-m(w_\delta,\eta)=0,\label{mu2_b}
\end{align}
and $u_\delta(0)=u_0$ almost everywhere in $\Gamma_0$. Moreover, it holds 
\begin{align*}
\sup_{t\in [0,T]}\tilde{E}^{CH}[t;u_\delta(t)]+{\color{black}\sup_{t\in [0,T]}\Vert u_\delta(t)\Vert^2}+\int_0^T\Vert w_\delta\Vert^2_{H^1(\Gamma(t))}dt &\leq C_\delta(T),\\
	t\Vert w_\delta\Vert_{H^1(\Gamma(t))}^2+\int_0^t s\Vert\partial^\bullet u_\delta\Vert^2_{H^1(\Gamma(s))}ds&\leq C{\color{black}_\delta}(T), \quad
    \forall\,t\in[0,T],
\end{align*}
for some $C_\delta(T)>0$ possibly depending on $\delta$.
Now note that, since clearly (see \cite[Sec.4.2]{CaeEll21}) we have $u_\delta^M\to u_\delta$ in $L^2_{L^2}$ as $M\to \infty$, and being $\vert F^\prime_\delta(c_\delta^M)\vert\leq C_\delta\vert c_\delta^M\vert$ and ${\vert F^{\prime\prime}_\delta\vert}\leq C_\delta$, we have, since $m(w_\delta^M,1)=m(F^\prime(u_\delta^M),1)$ and $m(w_\delta,1)=m(F^\prime(u_\delta),1)$,
\begin{align*}
\left\vert\int_0^T\left(\vert m(w_\delta^M,1)\vert^2-\vert m(w_\delta,1)\vert^2\right)dt\right\vert&\leq \int_0^T\left(\vert m(F^\prime(u_M^\delta),1)\vert+\vert m(F^\prime(u^\delta),1)\vert\right)\left\vert m(F^\prime(u_M^\delta)-F^\prime(u^\delta),1)\right\vert dt\\&\leq C_\delta(T)\Vert u_\delta^M-u_\delta\Vert_{L^2_{L^2}}\to 0\quad \text{as }M\to\infty,\end{align*}
having also exploited the uniform controls on $u_\delta^M$ and $u_\delta$ in $L^\infty_{L^2}$. Therefore, these result, together with the bound \eqref{pp2} (which gives weak lower sequential semicontinuity of the norms on the left-hand side of \eqref{passage5}), is enough to pass to the limit as $M\to\infty$ in \eqref{passage5}, so that it holds
\begin{align}
\esssup_{t\in[0,T]}\left(\frac t 4{\Vert \tgrad w_\delta\Vert^2}\right)+\int_0^T\frac{s}{4}\Vert\tgrad \partial^\bullet u_\delta\Vert^2ds+\int_0^T{K s}\Vert \partial^\bullet u_\delta\Vert^2ds \leq C(T)\left(1+\int_0^T\vert m(w_\delta,1)\vert^2 ds\right),
\label{ultimate}
\end{align}
where this time $C(T)>0$ is a constant \textit{independent} of $\delta$.

\textbf{Step 2. Limit as $\delta\to 0$.} 
In order to pass to the limit in $\delta$ we need to find a more refined energy estimate. In particular, let us first observe that again, from \eqref{CH2_b}, it holds the conservation of total mass, by choosing $\eta\equiv 1$. Then we test \eqref{CH2_b} by $\eta=u_\delta$, inferring 
\begin{align*}
\frac 1 2\frac{d}{dt}\Vert u_\delta\Vert^2=m_\star(\partial_t u_\delta,u_\delta)+\frac 1 2 g(u_\delta,u_\delta)=-\frac 1 2 g(u_\delta,u_\delta)-a_N(u_\delta,u_\delta)-a_S(w_\delta,u_\delta).
\end{align*}
Integrating by parts, exploiting also \eqref{mu2_b}, we get, by the definition of $F^{\prime\prime}_\delta$,
\begin{align}
a_S(w_\delta,u_\delta)=-\int_{\Gamma(t)} w_\delta\Delta_\Gamma u_\delta&=\Vert\Delta_\Gamma u_\delta\Vert^2+\int_{\Gamma(t)}F_\delta^{\prime\prime}(u_\delta)\vert \tgrad u_\delta\vert^2\nonumber\\&=\Vert\Delta_\Gamma u_\delta\Vert^2+\int_{\Gamma(t)}\varphi^\prime_\delta(u_\delta)\vert \tgrad u_\delta\vert^2-\Vert \tgrad u_\delta\Vert^2.
\label{e}
\end{align}
Notice that by \cite[Prop. 4.12]{CaeEll21} the solution to \eqref{CH2_b}-\eqref{mu2_b} is unique and thus it enjoys the regularity $L^2_{H^2}$ as in \cite[Thm. 4.14]{CaeEll21}. As a consequence, the second equation (i.e. the one for $w_\delta$) holds pointwise almost everywhere, and therefore the regularity of $u_\delta$ is sufficient to perform rigorously the calculation above. Recalling now that $\varphi_\delta^\prime \geq 0$ and by the regularity of $\textbf{V}$, we end up with
\begin{align*}
\frac 1 2\frac{d}{dt}\Vert u_\delta\Vert^2+\Vert \Delta_\Gamma u_\delta\Vert^2&\leq -\frac 1 2 g(u_\delta,u_\delta)-a_N(u_\delta,u_\delta)+\Vert \tgrad u_\delta\Vert^2\leq C(\Vert u_\delta\Vert^2+\Vert \tgrad u_\delta\Vert^2).
\end{align*}
We can now apply the following basic interpolation inequality, which can be obtained after an integration by parts and an application of Cauchy-Schwarz and Young's inequalities: 
$$
\Vert \tgrad u_\delta\Vert^2=-\int_{\Gamma(t)} u_\delta\Delta_\Gamma u_\delta\leq \frac 1 2\Vert u_\delta\Vert^2+\frac 1 2\Vert \Delta_\Gamma u_\delta\Vert^{2},
$$
allowing to infer, by Young's inequality,
\begin{align*}
\frac 1 2\frac{d}{dt}\Vert u_\delta\Vert^2+\frac{1}{2}\Vert \Delta_\Gamma u_\delta\Vert^2\leq  C\Vert u_\delta\Vert^2,
\end{align*}
so that, in the end, by Gronwall's Lemma, we deduce
\begin{align}
\sup_{t\in[0,T]}\Vert u_\delta\Vert^2+\int_0^T\Vert \Delta_\Gamma u_\delta\Vert^2ds\leq C(T),
    \label{l2reg}
\end{align}
uniformly in $\delta$. Concerning the energy estimate, by the same arguments as for \eqref{enel} we get:
\begin{align}
\frac{d}{dt}\tilde{E}^{CH}[u_\delta]+\frac{1}{2}\Vert\tgrad w_\delta^M\Vert^2\leq -g(u_\delta,w_\delta)+C_0+C_1\tilde{E}^{CH}[u_\delta].
\label{enel2}
\end{align}
Thanks to \eqref{l2reg} and the bound on $\mathbf V$, by Poincaré's inequality we have
\begin{align}
-g(u_\delta,w_\delta)\leq C\Vert u_\delta\Vert\Vert w_\delta\Vert\leq C\Vert w_\delta\Vert\leq \widetilde{C}(\vert m(w_\delta,1)\vert+\Vert\tgrad w_\delta\Vert),
\label{energ1}
\end{align}}with $\widetilde{C}>0$ a suitable constant independent of $\delta$.
Then, using known arguments together with the conservation of total mass (see also the proof of \cite[Prop.5.7, a)]{CaeEll21}) we deduce, from \cite[(5.1.7)]{CaeEll21},
\begin{align}
\vert m(w_\delta,u_\delta)\vert\leq \frac{1-\alpha}{8\widetilde{C}}\Vert \tgrad w_\delta\Vert^2+C\Vert \tgrad u_\delta\Vert^2+\alpha\vert m(w_\delta, 1)\vert,
\label{alpha}
\end{align}
where $\alpha\in[0,1)$ is a constant such that 
\begin{align}
0\leq m_{u_0}(t)\leq \alpha<1,
\label{alpa}
\end{align}
for any $t\in[0,T]$, whose existence is guaranteed being $\sup_{[0,T]}m_{u_0}(t)<1$ by assumption. Now, from \cite[(5.1.5)]{CaeEll21},
\begin{align}
\vert m(w_\delta,1)\vert\leq C\left(1+\Vert\tgrad  u_\delta\Vert^2\right)+\vert m(w_\delta,u_\delta)\vert,
\label{muu3}
\end{align}
thus by \eqref{alpha} we infer
\begin{align}
\vert m(w_\delta,1)\vert\leq C\left(1+\Vert\tgrad  u_\delta\Vert^2\right)+\frac{1}{8\widetilde{C}}\Vert\tgrad w_\delta\Vert^2,
\label{mu3}
\end{align}
{\color{black}so that, together with \eqref{enel2} and \eqref{energ1}, we deduce 
\begin{align*}
    \frac{d}{dt}\tilde{E}^{CH}[u_\delta]+\frac{3}{8}\Vert\tgrad w_\delta^M\Vert^2\leq C(1+\tilde{E}^{CH}[u_\delta]),
\end{align*}
entailing, by Gronwall's Lemma,
\begin{align}
\esssup_{t\in[0,T]}\tilde{E}^{CH}[u_\delta]+\int_0^T\Vert\tgrad w_\delta\Vert^2dt\leq C(T),
    \label{unifcontrol}
\end{align}
uniformly in $\delta$.}
Observe now that by \eqref{unifcontrol} we can deduce a better estimate of $\vert m(w_\delta,u_\delta)\vert$. In particular, from \cite[(5.1.6)]{CaeEll21}, \eqref{unifcontrol} and the conservation of mass, by Poincaré's inequality we infer
	\begin{align*}
	\vert m(w_\delta,u_\delta)\vert&\leq
	 C\Vert\tgrad w_\delta\Vert\Vert u_\delta-(u_\delta)_{\Gamma(t)}\Vert+m_{u_0}(t)\vert m(w_\delta,1)\vert\\&\leq C\Vert\tgrad w_\delta\Vert+\alpha\vert m(w_\delta,1)\vert,
	\end{align*}
	for $\alpha$ already defined {\color{black}in \eqref{alpa}}. On account of \eqref{l2reg}, \eqref{unifcontrol}, we have $\Vert u_\delta\Vert_{H^1(\Gamma(t))}\leq C(T)$ for almost any $t\in[0,T]$, so that{\color{black}, since $\vert\varphi_\delta(r)\vert\leq \varphi_\delta(r)r+1$ and $m(w_\delta,1)=-m(u_\delta,1)+m(\varphi_\delta(u_\delta),1)$,
	\begin{align*}
\vert m(w_\delta,1)\vert &\leq \vert m(u_\delta,1)\vert+\vert m(\varphi_\delta(u_\delta),1)\vert\\&\leq \vert m(u_\delta,1)\vert+\vert m(\varphi_\delta(u_\delta),u_\delta)\vert +\vert \Gamma(t)\vert\\&\leq C+\Vert u_\delta\Vert^2+\Vert\tgrad  u_\delta\Vert^2+\vert m(w_\delta,u_\delta)\vert\\
&\leq C(1+\Vert\tgrad w_\delta\Vert)+\alpha\vert m(w_\delta,1)\vert,
	\end{align*}
	 recalling $m(\varphi_\delta(u_\delta),u_\delta)=m(w_\delta,u_\delta)-\Vert \nabla u_\delta\Vert^2$.} This implies, being $\alpha<1$, by Poincaré's inequality,
		\begin{align}
	\label{w}
	\vert m(w_\delta,1)\vert\leq C(1+\Vert \tgrad  w_\delta\Vert),
	\end{align}
{\color{black}from which we infer, together with \eqref{unifcontrol},
$$
\int_0^T\vert m(w_\delta,1)\vert^2\leq C(T),
$$
entailing, by \eqref{ultimate},
\begin{align}
\esssup_{t\in[0,T]}\left({ \frac t 4\Vert \tgrad w_\delta\Vert^2}\right)+\int_0^T\frac{s}{4}\Vert\tgrad \partial^\bullet u_\delta\Vert^2ds+\int_0^T{K s}\Vert \partial^\bullet u_\delta\Vert^2ds \leq C(T).
\label{ultimate2}
\end{align}}
Following \cite[Sec.4.2]{CaeEll21}, \cite[Propositions 5.8-5.10]{CaeEll21} and \cite[Lemma 5.11]{CaeEll21}, the uniform estimates \eqref{l2reg}, \eqref{unifcontrol} and \eqref{ultimate2} are then enough, by standard compactness arguments, to pass to the limit as $\delta\rightarrow 0$, obtaining a weak solution with the same properties as the one of Theorem \ref{existence}. Therefore, this solution coincides with the one of Theorem \ref{existence} by uniqueness, and it also enjoys \eqref{mu5} and \eqref{mu4}. This concludes the proof of Part (i).
	\vskip 2mm
	
	\underline{Part (ii).} Let us fix $\tau>0$. We clearly have that $\Vert w\Vert_{H^1(\Gamma(t))}\leq C$ for almost any $t\geq\tau$. Let then introduce the cutoff function
    \begin{align}
    h_k(r)=\begin{cases}
    1-\frac{1}{k},\qquad r>1-\frac{1}{k},\\
    r,\qquad -1+\frac{1}{k}\leq r\leq 1-\frac{1}{k},\\
    -1+\frac{1}{k},\qquad r<-1+\frac{1}{k},
    \end{cases}
    \label{hk}
    \end{align}
    which is Lipschitz continuous. Then we set $u_k=h_k(u)$. Being $u$ in $L^\infty_{H^1}$, we have that the chain rule holds giving
    $$
    \nabla_{\Gamma} u_k=\chi_{[-1+\frac{1}{k},1-\frac{1}{k}]}(u)\nabla_{\Gamma} u.
    $$
    Accordingly, for any $k>1$ and $p\geq 2$, $f_k=\left\vert \frac{\theta}{2}\varphi (u_k)\right\vert^{p-2}\frac{\theta}{2}\varphi (u_k)$ is well defined and belongs to $L^\infty_{H^1}$ and satisfies
    $$
    \nabla_{\Gamma}\left(\left\vert \frac{\theta}{2}\varphi (u_k)\right\vert^{p-2}\frac{\theta}{2}\varphi (u_k)\right)=(p-1)\left\vert \frac{\theta}{2}\varphi (u_k)\right\vert^{p-2}\frac{\theta}{2}\varphi^\prime(u_k)\nabla_{\Gamma} u_k.
    $$
    If we now set $\eta=f_k$ in \eqref{mu}, we infer that
    \begin{align*}
    &(p-1)\int_{\Gamma(t)}\left\vert \frac{\theta}{2}\varphi (u_k)\right\vert^{p-2}\frac{\theta}{2}\varphi^\prime(u_k)\nabla_{\Gamma}u\cdot \nabla_{\Gamma}u_k+\int_{\Gamma(t)}\left\vert \frac{\theta}{2}\varphi (u_k)\right\vert^{p-2}\frac{\theta}{2}\varphi (u_k)\frac{\theta}{2}\varphi (u)=\int_{\Gamma(t)} \widehat{w}\left\vert \frac{\theta}{2}\varphi (u_k)\right\vert^{p-2}\frac{\theta}{2}\varphi (u_k),
    \end{align*}
    where $ \widehat{w}=w+u$. Being $F_{ln}$ strictly convex, the first term in the left-hand side is nonnegative. Moreover, since $\varphi$ is increasing, by the definition of $u_k$, we immediately infer
    \begin{align}
    \varphi (u_k)^2\leq \varphi (u)\varphi(u_k), \quad \forall k>1
    \label{uk}
    \end{align}
    and thus for the second term we have
    \begin{align*}
        \int_{\Gamma(t)}\left\vert \frac{\theta}{2}\varphi (u_k)\right\vert^{p-2}\frac{\theta}{2}\varphi (u_k)\frac{\theta}{2}\varphi (u) \geq \int_{\Gamma(t)}\left\vert \frac{\theta}{2}\varphi (u_k)\right\vert^{p}
    \end{align*}
    Regarding the right-hand side, we easily get, by the Sobolev embedding $H^1(\Gamma(t))\hookrightarrow L^p(\Gamma(t))$,
    \begin{align*}
    \int_{\Gamma(t)} \widehat{w}\left\vert \frac{\theta}{2}\varphi (u_k)\right\vert^{p-2}\frac{\theta}{2}\varphi(u_k)&\leq \frac{1}{2}\left\Vert\frac{\theta}{2}\varphi (u_k)\right\Vert^p_{L^p(\Gamma(t))}+C\Vert \widehat{w}\Vert^p_{L^p(\Gamma(t))}\\&\leq \frac{1}{2}\left\Vert\frac{\theta}{2}\varphi (u_k)\right\Vert^p_{L^p(\Gamma(t))}+C_p\Vert \widehat{w}\Vert^p_{H^1(\Gamma(t))},
    \end{align*}
    with $C_p>0$ depending on $p$.
    Collecting the above estimates, being ${u}\in L^\infty_{H^1}$ and $\Vert w\Vert_{H^1(\Gamma(t))}\leq C$ for almost any $t\geq\tau$, we immediately deduce
  \begin{align}
  \Vert\varphi(u_k)\Vert_{L^p(\Gamma(t))}\leq C_p(T,\tau,p),\quad \forall p\in[2,\infty),
  \label{conv}
  \end{align}
  for almost any $t\in[\tau,T]$. Therefore, there exists $\zeta\in L^\infty_{L^p}(\tau,T)$ such that, up to subsequences $\varphi(u_k)\overset{\ast}{\rightharpoonup}\zeta$ in $L^\infty_{L^p}(\tau,T)$ as $k\rightarrow\infty$. Now observe that, being $\vert u\vert<1$ almost everywhere on $\Gamma(t)$ (for almost any $t\in(0,T)$), and since, for almost any $t\in(0,T)$, $u_k\to u$ as $k\to\infty$ almost everywhere on $\Gamma(t)$, we get $\varphi(u_k)\to \varphi(u)$ almost everywhere. It is now immediate to deduce by \eqref{conv} that, for $p=2$, $\Vert\varphi(u_k)\Vert_{L^2_{L^2}}\leq C(\tau,T)$, therefore we can apply \cite[Thm.B.2]{CaeEll21} on $[\tau,T]$ to infer $\varphi(u_k)\rightharpoonup \varphi(u)$ in $L^2_{L^2}(\tau,T)$.   
  By uniqueness of the weak limit we therefore infer $\zeta=\varphi(u)$. Then, by weak$^\star$ sequential lower semicontinuity we get
  \begin{align}
\esssup_{t\in[\tau,T]}\Vert\varphi(u)\Vert_{L^p(\Gamma(t))}\leq C_p(T,\tau,p),\quad \forall p\in[2,\infty).
\label{phip}
\end{align}
 Concerning $\varphi^\prime$, we consider $g_k=\frac{\theta}{2}\varphi(u_k)e^{L\frac{\theta}{2}\vert \varphi(u_k)\vert}\in L^\infty_{H^1}$, for some arbitrary $L>0$, and we observe that
 \begin{align*}
 \nabla_{\Gamma}\left(\frac{\theta}{2}\varphi(u_k)e^{L\frac{\theta}{2}\vert\varphi(u_k)\vert}\right)=\frac{\theta}{2}\varphi^\prime(u_k)\left(1+L\frac{\theta}{2}\left\vert\varphi(u_k)\right\vert\right)e^{L\frac{\theta}{2}\vert\varphi(u_k)\vert}\nabla_{\Gamma} u_k.
 \end{align*}
 Therefore, considering again \eqref{mu} with $\eta=g_k$, we get
 \begin{align*}
 \int_{\Gamma(t)}\nabla_{\Gamma}u\cdot \nabla_{\Gamma}u_k \frac{\theta}{2}\varphi^\prime(u_k)\left(1+L\frac{\theta}{2}\left\vert\varphi(u_k)\right\vert\right)e^{L\frac{\theta}{2}\vert\varphi(u_k)\vert}+\int_{\Gamma(t)}\frac{\theta}{2}\varphi(u)\frac{\theta}{2}&\varphi(u_k)e^{L\frac{\theta}{2}\vert\varphi(u_k)\vert}\\
 &=\int_{\Gamma(t)} \widehat{w}\frac{\theta}{2}\varphi(u_k)e^{L\frac{\theta}{2}\vert \varphi(u_k)\vert}.
 \end{align*}

 Again the first term in the left-hand side is nonnegative, whereas, exploiting again \eqref{uk}, we obtain in the end
 \begin{align*}
 \int_{\Gamma(t)}\left(\frac{\theta}{2}\varphi(u_k)\right)^2e^{L\frac{\theta}{2}\vert\varphi(u_k)\vert}\leq \int_{\Gamma(t)} \widehat{w}\frac{\theta}{2}\varphi(u_k)e^{L\frac{\theta}{2}\vert \varphi(u_k)\vert}
 \end{align*}
By Lemma \ref{gener}, we get
 \begin{align}
 \int_{\Gamma(t)}\vert  \widehat{w}\vert \left\vert \frac{\theta}{2}\varphi(u_k)\right\vert e^{L\left\vert \frac{\theta}{2}\varphi(u_k)\right\vert}\leq \frac{1}{2}\int_{\Gamma(t)}\left\vert \frac{\theta}{2}\varphi(u_k)\right\vert^2 e^{L\left\vert \frac{\theta}{2}\varphi(u_k)\right\vert}+\int_{\Gamma(t)}e^{N\vert  \widehat{w}\vert},
 \label{pp}
 \end{align}
 implying
 \begin{align}
 \label{MTineq}
 \frac{1}{2}\int_{\Gamma(t)}\left\vert \frac{\theta}{2}\varphi(u_k)\right\vert^2 e^{L\left\vert \frac{\theta}{2}\varphi(u_k)\right\vert}\leq \int_{\Gamma(t)}e^{N\vert  \widehat{w}\vert}
 \end{align}
 for any $L>0$ and some $N=N(L)$.
 Now we exploit \eqref{equiv}, and then apply Lemma \ref{Trudi} with $u=N \widetilde{\widehat{w}}$ and the manifold $\mathcal{M}=\Gamma_0$ (with the corresponding metric) to infer
 \begin{align*}
      \int_{\Gamma(t)}e^{N\vert \widehat{w}\vert}=\int_{\Gamma_0}e^{N\vert\widetilde{\widehat{w}} \vert}J_t^0d\Gamma_0&\leq C\int_{\Gamma_0}e^{N\vert\widetilde{\widehat{w}} \vert}d\Gamma_0
      \leq Ce^{CN^2\Vert \widetilde{\widehat{w}}\Vert^2_{H^1(\Gamma_0)}} 
      \leq Ce^{CN^2\Vert {\widehat{w}}\Vert^2_{H^1(\Gamma(t))}},
 \end{align*}
 where in the last estimate we exploited the property that $(H^1(\Gamma(t)),\phi_t)_{t\in[0,T]}$ is a compatible space, where $\phi_{-t} v= \widetilde{v}$.
 Recalling now property \eqref{explog}, we infer that, for some $\tilde{C}>0$ sufficiently large, 
 \begin{align}
 \left(\frac{\theta}{2}\varphi^\prime(s)\right)^p\leq e^{pC}\left(\tilde{C}+\left\vert\frac{\theta}{2}\varphi(s)\right\vert^2e^{pC\left\vert\frac{\theta}{2}\varphi(s)\right\vert}\right),\quad\forall s\in(-1,1),
 \label{phiprime}
 \end{align}
 thus, taking $L=pC$ in \eqref{MTineq} and recalling that $\Vert \widehat{w}\Vert_{H^1(\Gamma(t))}\leq C$ for almost any $t\geq\tau$, we end up with
 $$
 \Vert\varphi^\prime(u_k)\Vert_{L^p(\Gamma(t))}\leq C_p(T,\tau,p),
 $$
 implying, by the same arguments used for $\varphi(u_k)$, applied in this case to $\varphi^\prime(u_k)$, that
  \begin{align}
 \esssup_{t\in[\tau,T]}\Vert\varphi^\prime(u)\Vert_{L^p(\Gamma(t))}\leq C_p(T,\tau,p),\quad \forall p\in[2,\infty).
 \label{phip1}
 \end{align}
  Therefore, exploiting elliptic regularity and recalling that $u\in L^\infty_{H^1}$, we obtain
  $$
  \Vert u\Vert_{H^2(\Gamma(t))}\leq \left(C+\Vert\Delta_{\Gamma(t)}u\Vert\right)\leq C\left(1+\Vert w\Vert+\Vert\varphi(u)\Vert\right)\leq C(T,\tau),
  $$
  for almost any $t\in[\tau,T]$.
  
  \vskip 2mm
  
  \underline{Part (iii).} If we now apply the chain rule to $\varphi(u)$ (which is possible, e.g., by approximation with the truncated functions $u_k$ and then passing to the limit as $k\rightarrow\infty$) we obtain
  	\begin{align*}
  	\tgrad  \varphi(u)=\varphi'(u)\tgrad u, \quad\text{ for a.a. }\ t\in [\tau,T].
  	\end{align*}
  	Then, for almost any $t\in[\tau,T]$, we have that
  $$
  \Vert \tgrad  \varphi(u)\Vert_{L^p(\Gamma(t))} \leq \Vert  \varphi^\prime(u) \Vert_{L^{2p}(\Gamma(t))}\Vert \tgrad u \Vert_{L^{2p}(\Gamma(t))}\leq C_p(T,\tau,p),
  $$
  by the Sobolev embedding $H^2(\Gamma(t))\hookrightarrow W^{1,q}(\Gamma(t))$ for every $q\geq2$. Therefore we get
  $$
  \Vert \varphi(u)\Vert_{W^{1,p}(\Gamma(t))}\leq C_p(T,\tau,p),$$ for every $p\geq2$. This implies, choosing, e.g., $p=3$, that
  \begin{align}\label{eq:diogo1}
  \Vert \varphi(u)\Vert_{ L^\infty(\Gamma(t))}\leq C(T,\tau),\quad \text{ for a.a. }t\in[\tau,T],
\end{align}
  by the embedding $W^{1,3}(\Gamma(t))\hookrightarrow L^\infty(\Gamma(t))$.
  	Therefore, being $u(t)\in H^2(\Gamma(t))\hookrightarrow C^0(\Gamma(t))$ for almost any $t\in[0,T]$, it follows from the singularities of $\varphi$ at $\pm 1$ and the estimate \eqref{eq:diogo1} that we can find $\xi=\xi(T,\tau)>0$ such that
  	\begin{align*}
  	\Vert u \Vert_{L^\infty(\Gamma(t))}\leq 1-{\xi}, \quad\text{ for a.a. }t\in[\tau,T],
  	\end{align*}	
  	that is, the strict separation property holds. This concludes the proof.
\end{proof}

\section{The second model}
\label{secondmodel}
We now consider the alternative weighted model \eqref{intro:model2}, which is analysed in \cite[Sec.6]{CaeEll21} (and references therein). {\color{black} In particular, as noticed in the Introduction, this is a simplified version of the model presented in \cite{Sauer,ZimTosLan19}, in which the problem is governed by two coupled fourth-order nonlinear PDEs that live
on an evolving two-dimensional manifold. For the phase transitions, the PDE is the Cahn-Hilliard equation for curved surfaces, which can be derived from surface mass balance in the
framework of irreversible thermodynamics. For the surface deformation, the PDE is the (vector-valued) Kirchhoff--Love thin shell equation. In our work, we study only the Cahn-Hilliard equation in the same formulation arising from the model in \cite{ZimTosLan19}, so that this analysis could, in a future work, be extended to consider the complete model as \cite{Sauer,ZimTosLan19}, in which the evolution of the surface is part of the model itself. We now show a sketch of the derivation of our model \eqref{intro:model2}, taken directly from \cite{ZimTosLan19}.
}
\subsection{Derivation}
\label{derivation2}
{\color{black}
For consistency with the literature, see e.g. \cite{Sauer,ZimTosLan19}, to derive this system we start with a description of the surfaces $\{\Gamma(t)\}_t$ given by the flow $\Phi_t^0\colon \Gamma_0 \to \Gamma(t)$ as in $\mathbf{A}_\Phi$. Assume that the surface $\Gamma(t)$ consists of two species with the mass densities per unit area $\rho_1$ and
$\rho_2$. The total mass of each species is assumed to be conserved. This entails, for the total
density $\rho := \rho_1 + \rho_2$, the balance law
\begin{align*}
    \dfrac{d}{dt}\int_{P(t)} \rho \equiv 0,
\end{align*}
for any surface patch $P(t)\subset \Gamma(t)$ evolving under the full velocity $\mathbf V$. By the Reynolds transport theorem, see also Proposition \ref{prop:transport}, and the arbitrariness of $P(t)$ we easily deduce
\begin{align*}
\partial^\bullet \rho+\rho\tgrad\cdot \textbf{V}=0, \quad \rho(0)=\widehat{\rho},
\end{align*}
where $\widehat \rho$ is the initial density. This equation shows in particular that we have the identity
\begin{align}
\rho(t,\Phi_t^0(p))=\frac{\widehat{\rho}}{J_t^0(p)},\quad \forall p\in \Gamma_0,
\label{rho_0}
\end{align}
where $J_t^0$ is the area change defined in \eqref{equiv}.

For the process on the surface we now introduce the dimensionless concentrations $c_i = \rho_i / \rho$, for $i=1,2$. Since $c_1+c_2 = 1$, it is sufficient to consider $c_1$ to model the local density fractions of the two species. The mass of species 1 in the membrane patch $P(t)$, evolving under $\mathbf V$, may only change due to a diffusive mass flux $q_d$ at the boundary, so that 
\begin{align}
\int_{P(t)}\rho\partial^\bullet{c}_1=\dfrac{d}{dt}\int_{P(t)}\rho c_1=-\int_{\partial P(t)}q_d\cdot \pmb\mu=-\int_{P(t)}\tgrad\cdot q_d,
\label{pppa}
\end{align}
where $\pmb\mu$ denotes the outer unit conormal of $\partial P(t)$. The last identity comes from the fact that we can directly consider, without loss of generality, $q_d$ to be purely tangential to $\Gamma(t)$. The diffusive flux $q_d$ is related to the chemical potential in the following sense. In analogy to 3D problems (\cite{CahHil58}), the Cahn-Hillard energy per reference area takes the form
$$\Psi_{CH} = \Psi_{mix}(c_1, T) + \Psi_i (J_t^0,\tgrad c_1),$$
where $T$ is the absolute temperature and, up to setting some constants to $1$ for simplicity, 
\begin{align*}
&\Psi_{mix}(c_1, T)=T\left(c_1\ln(c_1)+(1-c_1)\ln(1-c_1)\right)+c_1(1-c_1),
\\& \Psi_i(J_t^0,\tgrad c_1)=\frac{\lambda}2 J_t^0\vert \tgrad c_1\vert^2,
\end{align*}
with $\lambda>0$ a coefficient related to the width of the phase interface. As noted in \cite[(35)]{ZimTosLan19}, the area change $J^0_t$ is included
in $\Psi_i$, since $\Psi_{CH}$ is postulated to be an energy w.r.t. the reference configuration $\Gamma_0$, while $\tgrad c_1$ refers to the current
configuration (it can be viewed as having units of 1/(current length)). If we then want a Cahn-Hilliard energy per current area instead of one per reference area we simply need to multiply $\Psi_{CH}$ by $J^t_0=(J_t^0)^{-1}$; in effect, we have (current area)/(reference area)$=J^0_t$, thus, heuristically, $$\frac{\text{energy}}{\text{current area}}=\frac{\text{reference area}}{\text{current area}} \frac{\text{energy}}{\text{reference area}}=\frac 1 {J_t^0}\frac{\text{energy}}{\text{reference area}},$$ so that the total energy on the current configuration $\Gamma(t)$ can be defined as 
\begin{align}
E_{CH}^\rho(c_1):=\int_{\Gamma(t)}J^t_0\Psi_{CH}=\int_{\Gamma(t)}J^t_0\Psi_{mix}+\frac{\lambda}{2}\int_{\Gamma(t)}\vert \tgrad c_1\vert^2.
\label{energa}
\end{align}
Then, following the thermodynamical derivation in \cite[Appendix A]{ZimTosLan19} one can obtain that the chemical potential $w$ is defined by \footnote{As already noticed, this is performed in \cite[Appendix A]{ZimTosLan19} in the case of an elastic surface, but we keep the same derivation as a first step in the analysis of the more complex model.}
\begin{align}
w =J^0_t\frac{\delta E_{CH}^\rho}{\delta c_1} =- J^0_t\lambda\Delta_\Gamma c_1 +\Psi'_{mix}(c_1),
\end{align} 
and the diffusive flux is (see \cite[(121)]{ZimTosLan19})
$$q_d = -\frac{M}{J^0_t}\nabla_\Gamma w,$$
where $M>0$ is a mobility coefficient. The presence of the term $1/J^0_t$ in $q_d$ can again be heuristically explained by the fact that $w$ is defined per reference area, whereas $q_d$ is defined per current area. All in all, by setting for simplicity $M=1$, $\lambda=1$ and $\widehat{\rho}\equiv 1$, so that $J^0_t=1/\rho$, from \eqref{pppa} we are led to
\begin{align*}
\rho\dot c_1 - \nabla_\Gamma \cdot \left(\rho\nabla_\Gamma\left( -\dfrac{1}{\rho} \Delta_\Gamma c_1 + \Psi'_{mix}(c_1) \right) \right) = 0.    
\end{align*}
Problem \eqref{intro:model2} can be retrieved by rewriting the equation for the dimensionless concentration difference $c:=\frac{\rho_1-\rho_2}{\rho}={2c_1-1}$ and making a proper rescaling on $\Psi_{mix}$ so that it coincides with the potential $F$ defined in \eqref{sing}.
} 
\begin{remark}
\label{conservation}
{\color{black} Two observations are timely regarding the relation between models \eqref{intro:model1} and \eqref{intro:model2}. 
\begin{itemize}
    \item[1.] As observed in \cite{YusQuaOls20}, model \eqref{intro:model2} is similar to model \eqref{intro:model1}, but neither is a particular case of the other. In model \eqref{intro:model1}, the equation is written
for the conserved variable $u = \rho c$ and under the assumption that the free energy functional depends on the concentration difference $u$ rather than on the relative concentration difference $c$ as in model \eqref{intro:model2}. Formally, systems \eqref{intro:model1} and \eqref{intro:model2} are the same problem for inextensible membranes (i.e., $\tgrad \cdot\textbf{V}\equiv0$) and if one assumes $\rho = const$. Here
the situation partially resembles the coupling of a two-component compressible fluid
flow with dissipative Ginzburg–Landau interface dynamics discussed in \cite{LowTru98}, where, depending on the
choice of the variables to define the energy functional, the effects of compressibility
have to be considered in the definition of the chemical
potential. These differences between the two models explain why the conserved quantities \eqref{mass1}-\eqref{mass2} are in principle different.
    \item[2.] In the derivation of \eqref{intro:model2}, we think of the tangential component $\mathbf V_\tau$ as fixing a parametrisation of $\Gamma(t)$, as well as describing advection on the surface; note that the balance laws are considered on portions evolving under the full velocity $\mathbf V$. This is consistent with the framework of the physics literature, namely \cite{Sauer, ZimTosLan19}, in which this tangential component $\textbf{V}_\tau$ is part of the unknowns. We can also see this model in the light of \eqref{intro:model1} by heuristically taking $\mathbf V_\tau = \mathbf V_a$, so that the term involving both velocities vanishes and the notion of a material time derivative coincides with both the one in our analytical framework, i.e. involving the parametrisation, and the usual physical notion of including the advection of material points on the surface.
\end{itemize}
}
\end{remark}

\subsection{Weak formulation}
Let {\color{black}Assumption $\mathbf{A_\Phi}$} hold. Then the problem reads: find a pair $(c,w)$ such that, for all $\eta\in L^2_{H^1}$,
\begin{align}
&m_\star(\rho\partial^\bullet c,\eta) +\int_{\Gamma(t)}\rho\tgrad w\cdot\tgrad \eta =0,\label{phi}\\&
\int_{\Gamma(t)}\tgrad c\cdot\tgrad \eta+\int_{\Gamma(t)}\rho F^\prime(c)\eta=\int_{\Gamma(t)}\rho w \eta,\label{mu6}\\
&c(0)=c_0, \qquad\text{ a.e. on } \Gamma_0. \label{ini}
\end{align}
Here the weight function $\rho$, i.e., the total density, is determined by the transport equation
\begin{align}
\partial^\bullet\rho+\rho\tgrad \cdot\mathbf{V}=0,
\label{ODE}
\end{align}
with $\rho(0)\equiv 1$ on $\Gamma_0$.
As noticed in {\color{black}Section \ref{derivation2}}, we have an interesting characterisation of $\rho$:
\begin{align}
\rho(t,\Phi(t,p))=(J_t^0(p))^{-1},\quad \forall p\in \Gamma_0.
\label{rho}
\end{align}
In particular, we also have the bounds
\begin{align}
0<\frac{1}{C_\rho}\leq \rho\leq C_\rho,\quad \vert\tgrad \rho\vert\leq C_\rho,
\label{bounds}
\end{align}
with $C_\rho>1$. Note that the total energy of the system reads {\color{black}(see also \eqref{energa}, but with respect to the relative concentration difference $c$)}
$$
E^\rho_{CH}(c):=\int_{\Gamma(t)}\left(\frac{\vert\tgrad c\vert^2}{2}+\rho F(c)\right).
$$
    {\color{black}We recall the following result (see \cite[Thm.6.2]{CaeEll21}).} The statement below and its proof involve the \textit{weighted inverse Laplacian} {\color{black}operator, which we denote as $A_\rho^{-1}$}, define to be, for any 
\begin{align*}
f\in H^{-1}(\Gamma(t)) \quad \text{ such that } \quad{\color{black} m_\star( \rho f, 1 )} =0,
\end{align*}
 the unique solution {\color{black}$\zeta:=A_\rho^{-1}f$} to the problem
\begin{align}
\gint \rho \tgrad \zeta \cdot \tgrad \eta &= {\color{black}m_\star( \rho f, \eta)}  \quad \text{ with } \quad \gint \zeta= 0.
\end{align}
We define also, for such elements $f$, the weighted norm 
\begin{align}\label{eq:invlap}
    \|f\|_{\rho, -1} := \| \sqrt{\rho} \,  \tgrad \zeta\|= \langle \rho f, \zeta\rangle.
\end{align}
Note that there exist $C_1, C_2 > 0 $ such that $C_1\| \tgrad \zeta\| \leq  \|f\|_{\rho, -1} \leq C_2 \| \tgrad \zeta\|.$
\begin{theorem}
	\label{existence2}
Let	$c_0\in H^1(\Gamma_0)$, $\vert c_0\vert\leq 1$, $\vert(c_0)_{\Gamma_0}\vert<1$ and $F:[-1,1]\to \R$ be given by \eqref{logpot}. Then there exists a unique pair $(c,w)$ with
	$$c\in L^\infty_{H^1}\cap H^1_{H^{-1}},\quad w\in L^2_{H^1},$$ such that, for almost any $t\in[0,T]$, $\vert c(t)\vert<1$ almost everywhere in $\Gamma(t)$ and $(c,w)$ satisfies, for almost any $t\in[0,T]$, \eqref{phi}-\eqref{mu6}, with $c(0)=c_0$ almost everywhere in $\Gamma_0$. The solution $c$ also satisfies the additional regularity
	$$c\in C^0_{L^2}\cap L^\infty_{L^p}\cap L^2_{H^2},$$
	for all $p\in[1,+\infty)$. Furthermore, if $c_{0,1},c_{0,2}\in H^1(\Gamma_0)$, satisfying the existence assumptions, are such that $(c_{0,1})_{\Gamma_0}=(c_{0,2})_{\Gamma_0}$, and $c_1,c_2$ are the solutions of the system with $c_1(0)=c_{0,1}$ and $c_2(0)=c_{0,2}$, then there exists a constant $C>0$ independent of $t$, such that, for almost any $t\in[0,T]$,
	\begin{align}
	\Vert c_1(t)-c_2(t)\Vert_{\rho,-1}\leq e^{Ct}\Vert c_{0,1}-c_{0,2}\Vert_{\rho,-1}.
	\label{continuous}
	\end{align}
\end{theorem}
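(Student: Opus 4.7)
The existence, additional regularity, and most of the weak formulation stated in Theorem \ref{existence2} are already established in \cite[Thm.~6.2]{CaeEll21}; the part that requires a fresh argument is the continuous dependence inequality \eqref{continuous}, which in particular yields uniqueness. To prove it I set $\bar c := c_1-c_2$ and $\bar w:= w_1-w_2$ and subtract the weak formulations \eqref{phi}--\eqref{mu6}. Mass conservation applied to each solution (choose $\eta\equiv 1$ in \eqref{phi} and use $\rho(0)\equiv 1$ to obtain $\int_{\Gamma(t)}\rho c_i=\int_{\Gamma_0}c_{0,i}$), together with the hypothesis $(c_{0,1})_{\Gamma_0}=(c_{0,2})_{\Gamma_0}$, yields $\int_{\Gamma(t)}\rho\bar c=0$ for every $t\in[0,T]$. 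Hence $\bar c$ is admissible for the weighted inverse Laplacian, and I introduce $\zeta:=A_\rho^{-1}\bar c$.

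The first core step is a pair of cancelling tests. I would test the subtracted equation for $\partial^\bullet\bar c$ with $\eta=\zeta$ and use the defining identity $\int_{\Gamma(t)}\rho\tgrad\zeta\cdot\tgrad\bar w=\langle\rho\bar c,\bar w\rangle$ to obtain $\langle\rho\partial^\bullet\bar c,\zeta\rangle=-\int_{\Gamma(t)}\rho\bar c\bar w$. Testing the subtracted equation for $\bar w$ with $\eta=\bar c$ then expresses $\int_{\Gamma(t)}\rho\bar c\bar w$ in terms of $\|\tgrad\bar c\|^2$ and the monotone increment $\int_{\Gamma(t)}\rho(F'(c_1)-F'(c_2))\bar c$. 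Splitting $F'(r)=\varphi(r)-r$ and discarding the non-negative contribution coming from the monotonicity of $\varphi$, I arrive at
\begin{align*}
\langle\rho\partial^\bullet\bar c,\zeta\rangle\le-\|\tgrad\bar c\|^2+C\|\bar c\|^2.
\end{align*}

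The second and most delicate step is to convert $\langle\rho\partial^\bullet\bar c,\zeta\rangle$ into $\tfrac12\tfrac{d}{dt}\|\bar c\|_{\rho,-1}^2$ up to a controllable remainder. Writing $\|\bar c\|_{\rho,-1}^2=\int_{\Gamma(t)}\rho|\tgrad\zeta|^2$ and applying the Leibniz rule under the evolving integral, the identity $\partial^\bullet\rho+\rho\,\tgrad\cdot\mathbf V=0$ makes the dilation term cancel, leaving $\tfrac{d}{dt}\int_{\Gamma(t)}\rho|\tgrad\zeta|^2=2\int_{\Gamma(t)}\rho\tgrad\zeta\cdot\partial^\bullet\tgrad\zeta$. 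The commutator formula for $\partial^\bullet$ and $\tgrad$ (involving a matrix bounded by the $C^1$-norm of $\mathbf V$, provided by Assumption $\mathbf A_\Phi$) replaces $\partial^\bullet\tgrad\zeta$ by $\tgrad\partial^\bullet\zeta$ up to a remainder of size $\|\bar c\|_{\rho,-1}^2$. Using the defining property of $\zeta$ with $\eta=\partial^\bullet\zeta$ turns the remaining piece into $\langle\rho\bar c,\partial^\bullet\zeta\rangle$, and comparing with the parallel identity obtained by differentiating $\|\bar c\|_{\rho,-1}^2=\langle\rho\bar c,\zeta\rangle$ via Proposition~\ref{prop:transport}(a) eliminates $\langle\rho\bar c,\partial^\bullet\zeta\rangle$ to produce
\begin{align*}
\tfrac{1}{2}\tfrac{d}{dt}\|\bar c\|_{\rho,-1}^2\le\langle\rho\partial^\bullet\bar c,\zeta\rangle+C\|\bar c\|_{\rho,-1}^2.
\end{align*}

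To close the argument I would use the standard interpolation obtained by testing the definition of $\zeta$ with $\bar c$ itself: $\int_{\Gamma(t)}\rho\bar c^2=\int_{\Gamma(t)}\rho\tgrad\zeta\cdot\tgrad\bar c\le C\|\bar c\|_{\rho,-1}\|\tgrad\bar c\|$, which combined with the lower bound on $\rho$ and Young's inequality absorbs $C\|\bar c\|^2$ into $\|\tgrad\bar c\|^2$ up to a further $\|\bar c\|_{\rho,-1}^2$ contribution. Plugging this back yields $\tfrac{d}{dt}\|\bar c\|_{\rho,-1}^2\le C\|\bar c\|_{\rho,-1}^2$, and Gronwall delivers \eqref{continuous}; the initial norm reduces to the usual $H^{-1}$-type bracket because $\rho(0)\equiv 1$. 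The main obstacle is the transport manipulation in the penultimate step: the commutator $[\partial^\bullet,\tgrad]$ acting on $\zeta$ must be justified rigorously under Assumption $\mathbf A_\Phi$ alone, and one has to check that the time regularity of $\zeta$ inherited via the weighted elliptic problem from $\partial^\bullet\bar c\in L^2_{H^{-1}}$ suffices to give every integration by parts a precise meaning, perhaps by first working with a smooth Galerkin approximation and passing to the limit.
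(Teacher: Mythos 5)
Your proposal is correct and follows essentially the same route as the paper's proof: mass conservation to make $\bar c$ admissible for the weighted inverse Laplacian, the cancelling tests with $\eta=A_\rho^{-1}\bar c$ and $\eta=\bar c$, the convexity of $F_{ln}$ to discard the monotone increment, the interpolation $\int_{\Gamma(t)}\rho\bar c^2=\int_{\Gamma(t)}\rho\tgrad\zeta\cdot\tgrad\bar c$, and the weighted transport identity (Proposition \ref{prop2}), in which $\partial^\bullet\rho+\rho\,\tgrad\cdot\mathbf V=0$ cancels the dilation term, to turn $\langle\rho\partial^\bullet\bar c,\zeta\rangle$ into $\tfrac12\tfrac{d}{dt}\|\bar c\|_{\rho,-1}^2$ plus a remainder of size $\|\bar c\|_{\rho,-1}^2$, followed by Gronwall. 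The delicate regularity point you flag at the end is real, and the paper handles it exactly as you suggest, by establishing the estimates at the level of the approximating problems.
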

\begin{remark}
    {\color{black} The assumption $\vert(c_0)_{\Gamma_0}\vert<1$ is necessary and standard when dealing with Cahn-Hilliard equations, since it simply excludes that the initial datum is a pure phase, i.e, $c_0\equiv 1$ or $c_0\equiv -1$. Indeed, in these cases, the phase separation phenomenon would not take place, due to the presence of one single substance.}
\end{remark}
The proof of existence follows from the uniform estimates obtained in the proof of Theorem \ref{theo} below. For completeness, we now include a proof of continuous dependence on the initial data, entailing uniqueness, which was omitted in \cite{CaeEll21}. 

\begin{proof}
(Stability of weak solutions to \eqref{phi}-\eqref{ini})
The density $\rho$ is determined by a single ordinary differential equation which can be solved explicitly, and it is thus unique. Now suppose $(c_1, w_1)$ and $(c_2,w_2)$ both solve \eqref{phi}, \eqref{mu6}, with initial conditions $c_1(0)=c_{0,1}$ and $c_2(0)=c_{0,2}$, such that $(c_{0,1})_{\Gamma_0}=(c_{0,2})_{\Gamma_0}$. Denote $\xi^c = c_1 - c_2$, $\xi^w = w_1 - w_2$. Subtracting the corresponding equations leads to
\begin{align}
{\color{black}m_\star( \rho \md \xi^c, \eta)} + \gint \rho \tgrad \xi^w \cdot \tgrad \eta &= 0\label{a1a} \\
\label{a1b}\gint \tgrad \xi^c \cdot \tgrad \eta + \gint \rho ( F'(c_1) - F'(c_2) ) \eta &= \gint \rho \xi^w \eta.
\end{align}
Using the ODE \eqref{ODE} for $\rho$ and \eqref{phi}, noting that $\xi^c(0)=c_{0,1}-c_{0,2}$, we obtain 
\begin{align*}
\dfrac{d}{dt} \gint \rho \xi^c = {\color{black}m_\star( \rho \md \xi^c, 1 )} = 0 \quad \Longrightarrow \quad \gint \rho \xi^c =\int_{\Gamma_0}c_{0,1}-c_{0,2} \ \equiv0.
\end{align*}
Therefore the weighted inverse Laplacian $\invlaprho \xi^c$ is well defined, and taking $\eta = \invlaprho \xi^c$ in \eqref{a1a} we get
\begin{align}
&{\color{black}m_\star( \rho \md \xi^c, \invlaprho \xi^c)} + \gint \rho \tgrad \xi^w \cdot \tgrad \invlaprho \xi^c=0.
\label{eq:1} 
\end{align} 
Now note:
\begin{align*}
{\color{black}m_\star( \rho \md \xi^c, \invlaprho \xi^c)}& = \dfrac{d}{dt} \left[ \gint \rho \xi^c \invlaprho \xi^c \right] - \gint \rho \xi^c \md \invlaprho \xi^c \\
&=\dfrac{d}{dt} \left[ \gint \rho |\tgrad \invlaprho\xi^c|^2 \right] - \gint \rho \tgrad \invlaprho \xi^c \cdot \tgrad \md \invlaprho \xi^c \\
&=\dfrac{d}{dt} \|\xi^c\|_{\rho, -1}^2  - \gint \rho \tgrad \invlaprho \xi^c \cdot \tgrad \md \invlaprho \xi^c
\end{align*} 
and 
\begin{align*}
\gint \rho \tgrad \xi^w \cdot \tgrad \invlaprho \xi^c =\gint \rho \xi^w \xi^c
\end{align*}
so that \eqref{eq:1} becomes
\begin{align}\label{eq:2}
\dfrac{d}{dt} \|\xi^c\|_{\rho, -1}^2 + \gint \rho \xi^w \xi^c&= \gint \rho \tgrad \invlaprho \xi^c \cdot \tgrad \md \invlaprho \xi^c.
\end{align}
We now also test \eqref{a1b} with $\eta=\xi^c$ to obtain
\begin{align}\label{eq:3}
\|\tgrad \xi^c\|^2 + \gint \rho (F'(c_1)-F'(c_2)) \xi^c = \gint \rho \xi^w \xi^c.
\end{align}
Due to the structure of the logarithmic potential $F$, being $F_{ln}$ strictly convex, we have the estimate
\begin{align*}
\gint \rho (F'(c_1)-F'(c_2)) \xi^c \geq - C \| {\color{black}\sqrt{\rho}}\xi^c \|{\color{black}^2}
\end{align*}
from where \eqref{eq:3} becomes
\begin{align}\label{eq:4}
\|\tgrad \xi^c\|^2  \leq \gint \rho \xi^w \xi^c + C \|{\color{black}\sqrt{\rho}}\xi^c\|^2.
\end{align}
Adding \eqref{eq:2} and \eqref{eq:4} the terms involving the product $\xi^w \xi^c$ cancel out and we are led to
\begin{align}\label{eq:5}
\dfrac{d}{dt} \|\xi^c\|_{\rho, -1}^2 + \|\tgrad \xi^c \|^2 &\leq C \|{\color{black}\sqrt{\rho}}\xi^c\|^2 + \gint \rho \tgrad \invlaprho \xi^c \cdot \tgrad \md \invlaprho \xi^c.
\end{align}
We now estimate the first term on the right-hand side as
\begin{align*}
C \|{\color{black}\sqrt{\rho}}\xi^c\|^2 =C \gint \rho |\xi^c|^2 &= C \gint \rho \tgrad \xi^c \cdot \tgrad \invlaprho \xi^c\leq \dfrac{1}{4} \|\tgrad \xi^c\|^2 + C \| \xi^c \|_{\rho, -1}^2.
\end{align*}
For the second term, we note that
\begin{align*}
\gint \rho \tgrad \invlaprho \xi^c \cdot \tgrad \md \invlaprho \xi^c &\leq \dfrac{1}{2}\dfrac{d}{dt} \gint \rho |\tgrad \invlaprho \xi^c|^2 + C \gint \rho |\tgrad \invlaprho \xi^c|^2\\ &\leq \dfrac{1}{2}\dfrac{d}{dt} \|\xi^c\|_{\rho, -1}^2 + C \|\xi^c\|_{\rho, -1}^2.
\end{align*}
All in all, we obtain from \eqref{eq:5} the estimate
\begin{align*}
\dfrac{d}{dt}\|\xi^c\|^2_{\rho, -1} + \|\tgrad \xi^c\|^2 \leq C \|\xi^c\|^2_{\rho, -1},
\end{align*}
and an application of Gronwall's Lemma implies the continuous dependence estimate stated in \eqref{continuous}. Then uniqueness follows by setting $c_{0,1}\equiv c_{0,2}$.
\end{proof}

\begin{remark}
	Notice that the regularity stated in Theorem \ref{existence2} can be slightly improved as in the case of the first model (see Remark \ref{LfourHtworeg}). In particular, since $c\in L^2_{H^2}$ solves the problem, for almost all $t\in [0,T]$,
	$$
	-\frac{1}{\rho}\Delta_{\Gamma}c(t)=w(t)-F^{\prime}(c(t))\in L^2({\Gamma(t)}),
	$$
	we are allowed to multiply by $-\Delta_{\Gamma}c\in L^2(\Gamma(t))$ for almost any $t\in[0,T]$. Recalling that $\varphi^\prime>0$, after an integration by parts, being $\Gamma(t)$ closed and $c\in L^\infty_{H^1}$, we obtain, by \eqref{bounds},
	\begin{align*}
	&C\norm{\Delta_{\Gamma}c}^2\leq m(\frac{1}{\rho}\Delta_{\Gamma}c,\Delta_{\Gamma} c)+\frac{\theta}{2}m(\varphi^\prime(c),\vert\tgrad c\vert^2)\leq \norm{\tgrad  w}\norm{\tgrad  c}+\Vert\tgrad c\Vert^2\leq C(1+\norm{\tgrad  w}),
	\end{align*}
	and knowing that $w\in L^2_{H^1}$, we infer $c\in L^4_{H^2}$.
\end{remark}

We make use of the following weighted $L^2$ and $H^1$ products, whose induced norms are  equivalent norms on $L^2(\Gamma(t))$ and $H^1(\Gamma(t))$, respectively:
\begin{align}
(f,g)_\rho:=m(\rho f, g), \qquad (f,g)_{1,\rho}:=(f,g)_\rho+\int_{\Gamma(t)}\rho\tgrad f \cdot \tgrad g.
\label{ind}
\end{align}
Notice that for $t=0$ these definitions coincide with the natural norms on these spaces, being $\rho(0,x)\equiv 1$ for any $x\in \Gamma_0$.
We now give an extension of Proposition \ref{prop:transport}, whose proof is shown in Appendix \ref{app:geom}:
\begin{prop}
\label{prop2}
	For $\eta,\phi\in H^1_{L^2}$, with $\tgrad \partial^\bullet \phi\in L^2_{L^2}$ and $\tgrad \partial^\bullet \eta\in L^2_{L^2}$, the following identity holds
	\begin{align}
	\nonumber\frac{d}{dt}(\tgrad \eta,\tgrad \phi)_\rho&=m(\partial^\bullet\rho\tgrad \eta,\tgrad \phi)+(\tgrad \partial^	\bullet\eta,\tgrad \phi)_\rho+(\tgrad \eta,\tgrad \partial^	\bullet\phi)_\rho+\int_{\Gamma(t)}\rho B(\mathbf{V})\tgrad \eta\cdot \tgrad \phi\\
	&=(\tgrad \partial^	\bullet\eta,\tgrad \phi)_\rho+(\tgrad \eta,\tgrad \partial^	\bullet\phi)_\rho+\int_{\Gamma(t)}\rho \widetilde{B}(\mathbf{V})\tgrad \eta\cdot \tgrad \phi,
	\label{der1}
	\end{align}
	for almost any $t\in [0,T]$, where $B$ is the vector field given in Section \ref{weakk} and $\widetilde{B}(\mathbf{V}):=-2\mathbf{D}(\mathbf{V})$.
\end{prop}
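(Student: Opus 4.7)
The plan is to prove the first identity by direct application of the Leibniz transport theorem to the scalar function $f(t,\cdot) = \rho\, \tgrad\eta\cdot\tgrad\phi$ on $\Gamma(t)$, and to deduce the second identity by cancelling $\partial^\bullet\rho$ against the divergence contribution of $B(\mathbf{V})$ via the mass balance \eqref{ODE}. Concretely, I would start from
\begin{align*}
\frac{d}{dt}\int_{\Gamma(t)} \rho\,\tgrad\eta\cdot\tgrad\phi \,=\, \int_{\Gamma(t)}\partial^\bullet(\rho\,\tgrad\eta\cdot\tgrad\phi) \,+\, \int_{\Gamma(t)} \rho\,\tgrad\eta\cdot\tgrad\phi\;\tgrad\cdot\mathbf{V},
\end{align*}
expand the material derivative by the Leibniz rule, and rewrite the two terms $\rho\,(\partial^\bullet\tgrad\eta)\cdot\tgrad\phi$ and $\rho\,\tgrad\eta\cdot(\partial^\bullet\tgrad\phi)$ using the pointwise commutator identity which underpins Proposition~\ref{prop:transport}(b), namely
\begin{align*}
(\partial^\bullet\tgrad\eta)\cdot\tgrad\phi + \tgrad\eta\cdot(\partial^\bullet\tgrad\phi) = \tgrad\partial^\bullet\eta\cdot\tgrad\phi + \tgrad\eta\cdot\tgrad\partial^\bullet\phi - 2\mathbf{D}(\mathbf{V})\tgrad\eta\cdot\tgrad\phi,
\end{align*}
as derived in \cite{DziEll13-a,DziKroMul13}.

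Substituting gives four contributions: $m(\partial^\bullet\rho\,\tgrad\eta,\tgrad\phi)$, the two gradient-of-material-derivative terms in the weighted inner product, and the combination $\int_{\Gamma(t)} \rho\,[(\tgrad\cdot\mathbf{V})\mathbf{I} - 2\mathbf{D}(\mathbf{V})]\,\tgrad\eta\cdot\tgrad\phi$ where the $(\tgrad\cdot\mathbf{V})\mathbf{I}$ part originates from the transport-theorem divergence term and the $-2\mathbf{D}(\mathbf{V})$ part from the commutator. By the very definition of $B(\mathbf{V}) = (\tgrad\cdot\mathbf{V})\mathbf{I} - 2\mathbf{D}(\mathbf{V})$ recalled in Section~\ref{weakk}, this is exactly the first identity of the proposition.

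For the second equality I would use the transport equation \eqref{ODE} for $\rho$, which yields the pointwise relation $\partial^\bullet\rho = -\rho\,\tgrad\cdot\mathbf{V}$. Hence
\begin{align*}
m(\partial^\bullet\rho\,\tgrad\eta,\tgrad\phi) + \int_{\Gamma(t)}\rho\,(\tgrad\cdot\mathbf{V})\tgrad\eta\cdot\tgrad\phi = 0,
\end{align*}
which cancels precisely the $(\tgrad\cdot\mathbf{V})\mathbf{I}$ portion of $\rho B(\mathbf{V})$ and leaves $-2\mathbf{D}(\mathbf{V}) = \widetilde{B}(\mathbf{V})$, producing the second stated identity.

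The only subtle point is justifying the pointwise commutator and the Leibniz expansion at the regularity level $\eta,\phi\in H^1_{L^2}$ with $\tgrad\partial^\bullet\eta,\tgrad\partial^\bullet\phi\in L^2_{L^2}$, rather than for smooth functions; I expect this to be the main technical step. I would handle it by standard smooth approximation using the flow map $\Phi_t^0$: pull the functions back to $\Gamma_0$, mollify in time (and, if necessary, in space) to obtain smooth approximants for which the classical pointwise identities hold, and then pass to the limit in the integrated formulation, exploiting that each bilinear side is continuous in the stated topology thanks to the uniform bounds \eqref{es1}, \eqref{es2}, \eqref{equiv} on $\mathbf{V}$, on $B(\mathbf{V})$ and on $\rho$. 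This is exactly the same procedure used to prove Proposition~\ref{prop:transport}, so the argument can be patterned on the proof in Appendix~\ref{app:geom}, simply carrying the weight $\rho$ along.
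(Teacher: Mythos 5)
Your argument is correct and takes essentially the same route as the paper's: both proofs amount to differentiating $\int_{\Gamma(t)}\rho\,\tgrad\eta\cdot\tgrad\phi$ via the transport theorem together with the commutation rule for $\partial^\bullet$ and $\tgrad$ (which supplies the $-2\mathbf{D}(\mathbf{V})$ contribution), and then using $\partial^\bullet\rho=-\rho\,\tgrad\cdot\mathbf{V}$ to cancel the $(\tgrad\cdot\mathbf{V})\mathbf{I}$ part of $B(\mathbf{V})$. The only cosmetic difference is that the paper carries out the computation in local coordinates for the diagonal case $\eta=\phi$ and recovers the general statement by polarization, whereas you work directly with the bilinear expression and cite the pointwise commutator identity; the regularity/density issue you flag is handled identically in both cases.
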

In conclusion, being $\partial^\bullet\rho=-\rho\tgrad \cdot\mathbf{V}$, we infer by  Proposition \ref{prop:transport}
\begin{align}
\frac{d}{dt}(\eta,\phi)_{\rho}=(\partial^\bullet\eta,\phi)_\rho+(\eta,\partial^\bullet\phi)_\rho, \quad \forall\,\eta,\phi\in H^1_{L^2},
\label{byparts}
\end{align}
retrieving the classical integration by parts formula as in the case of a fixed manifold. This is not surprising, in the sense that the term $\rho$ accounts for local stretching or compressing of the surfaces and somehow annihilates the effects of the evolving surface.
\subsection{Regularisation and strict separation property}
{\color{black}For this model we only need to assume $\mathbf{A_\Phi}$, without any extra regularity hypothesis.}
Our main result is the following 
\begin{theorem}
	\label{theo}
	{\color{black}Let the assumptions of Theorem \ref{existence2} hold.} Denote by $(c,w)$ the (unique) weak solution to \eqref{phi}-\eqref{ini}.
	\begin{itemize}
	    \item[(i)] There exists a constant $C=C(T, E_{CH}^\rho(u_0))>0$ such that, for almost any $t\in[0,T]$,
	\begin{align}
	t\Vert w\Vert_{H^1(\Gamma(t))}^2+\int_0^t s\Vert\partial^\bullet c\Vert^2_{H^1(\Gamma(s))}ds\leq C(T, E_{CH}^\rho(u_0)).
	\label{mu5bis}
	\end{align}
	\item[(ii)] For any $0<\tau\leq T$, there exist constants $C=C(T,\tau, E_{CH}^\rho(u_0))>0$ and $C_p=C_p(T,\tau,p, E_{CH}^\rho(u_0))>0$ such that, for almost any $t\in[\tau,T]$
	\begin{align}
	\Vert w\Vert_{H^1(\Gamma(t))}\leq C(T,\tau, E_{CH}^\rho(u_0)),
	\label{mu4bis}
	\end{align}
	$$
	\Vert\varphi(c)\Vert_{L^p(\Gamma(t))}+\Vert\varphi^\prime(c)\Vert_{L^p(\Gamma(t))}\leq C_p(T,\tau,p, E_{CH}^\rho(u_0)),\quad \forall p\in[2,\infty),
	$$
		$$
	\Vert c\Vert_{H^2(\Gamma(t))}\leq C(T,\tau, E_{CH}^\rho(u_0)).
	$$
	\item[(iii)] There exists $\xi=\xi(T,\tau, E_{CH}^\rho(u_0))>0$, such that
	\begin{align*}
	\Vert c\Vert_{L^\infty(\Gamma(t))}\leq 1-{\xi}, \quad \text{ for almost any }t\in[\tau,T].
	\end{align*}
	\end{itemize}
\end{theorem}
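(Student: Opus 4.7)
The plan is to mimic the three-part strategy of Theorem \ref{main1}, with two structural simplifications. First, in \eqref{phi}--\eqref{mu6} there is no advective term of the form $a_N$, which removes several of the hardest terms in the proof of Theorem \ref{main1}. Second, the identities \eqref{der1} and \eqref{byparts} together with $\partial^\bullet \rho = -\rho\,\tgrad\cdot \mathbf V$ mean that integration by parts in the weighted $L^2$-inner product $(\cdot,\cdot)_\rho$ behaves exactly as on a fixed manifold; this is the key algebraic cancellation that will power the proof.

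\textbf{Part (i).} I would set up a Galerkin approximation $(c_\delta^M, w_\delta^M)$ in the spaces $V_M(t)$ with the regularised potential $F^\delta$, exactly as in Proposition \ref{proposition} but replacing the equations by the weighted analogues of \eqref{CH2}--\eqref{mu2}. A baseline energy estimate for $\tilde E_{CH}^\rho[c_\delta^M]$, uniform in $M$ (for fixed $\delta$) and then in $\delta$, follows by testing the first equation with $\eta=w_\delta^M$, using \eqref{der1} to differentiate $\tfrac12\|\tgrad c_\delta^M\|^2$ and \eqref{byparts} to differentiate $\gint \rho F^\delta(c_\delta^M)$, and controlling $|m(\rho w_\delta^M,1)|$ via the mass conservation $\tfrac{d}{dt}\gint \rho c_\delta^M=0$ together with the argument built around \eqref{alpha}--\eqref{w}. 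For the regularisation estimate I would then test the first equation with $\eta=\partial^\bullet w_\delta^M$, differentiate $\tfrac12\|\sqrt{\rho}\,\tgrad w_\delta^M\|^2$ via Proposition \ref{prop2}, and use the chain rule together with \eqref{byparts} on the potential term to express the resulting identity as a differential inequality for a quantity $\mathcal Q_\delta^M$ analogous to \eqref{ees} (but weighted by $\rho$). Multiplying by $t$, combining with a separate test by $\eta=t\,\partial^\bullet c_\delta^M$ to absorb the $\|\partial^\bullet c_\delta^M\|^2$ term, and applying Gronwall's Lemma together with the fact that $\mathcal Q_\delta^M\in L^1(0,T)$ uniformly, yields \eqref{mu5bis} at the Galerkin level. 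I would pass to the limit first in $M$ and then in $\delta$ by standard compactness, as in \cite{CaeEll21}.

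\textbf{Part (ii).} Given that $w\in L^\infty_{H^1}(\tau,T)$ from (i), I would follow the cutoff argument of Part (ii) of Theorem \ref{main1}. Define $c_k=h_k(c)$ using \eqref{hk} and test \eqref{mu6} with $\eta=|\tfrac{\theta}{2}\varphi(c_k)|^{p-2}\tfrac{\theta}{2}\varphi(c_k)$. Convexity of $F_{ln}$ discards the gradient term, the monotonicity estimate \eqref{uk} bounds the product $\varphi(c)\varphi(c_k)$ below by $\varphi(c_k)^2$, and the bounds \eqref{bounds} on $\rho$ allow the same absorption scheme, producing a uniform $L^p$ bound on $\varphi(c_k)$. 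Passing $k\to\infty$ via weak$^\star$ compactness, as in \eqref{phip}, yields the bound on $\varphi(c)$. For $\varphi'(c)$ I would choose the test function $\eta=\tfrac{\theta}{2}\varphi(c_k)e^{L\,\tfrac{\theta}{2}|\varphi(c_k)|}$ and invoke the generalised Young inequality (Lemma \ref{gener}) and the Moser-Trudinger type bound (Lemma \ref{Trudi}) on the reference manifold $\Gamma_0$, transferred to $\Gamma(t)$ via compatibility and \eqref{equiv}, to obtain the analogue of \eqref{MTineq}, and conclude by \eqref{explog} and \eqref{phiprime}. The $H^2$-bound on $c$ follows by rewriting $-\tfrac{1}{\rho}\Delta_\Gamma c=w-F'(c)$ and applying elliptic regularity together with $\|\rho\|_{C^1(\Gamma(t))}\leq C$.

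\textbf{Part (iii).} With $\varphi(c),\varphi'(c)\in L^p(\Gamma(t))$ for all finite $p$ and $c\in H^2(\Gamma(t))$, the chain rule gives $\tgrad\varphi(c)=\varphi'(c)\tgrad c$ and then $\|\varphi(c)\|_{W^{1,3}(\Gamma(t))}\leq C$ by H\"older and the embedding $H^2\hookrightarrow W^{1,6}$; the Sobolev embedding $W^{1,3}(\Gamma(t))\hookrightarrow L^\infty(\Gamma(t))$ bounds $\varphi(c)$ uniformly, which, by the blow-up of $\varphi$ at $\pm 1$, produces the separation constant $\xi$. The main obstacle I anticipate is the Galerkin-level regularisation estimate in Part (i): the test function $\partial^\bullet w_\delta^M$ is not directly admissible, so one must reconstruct it via the identity that comes from differentiating \eqref{mu6} in $t$ and using the chain rule on $(F^\delta)'(c_\delta^M)$, all while tracking the $\rho$-weighted transport formulas; it is precisely here that \eqref{der1} and \eqref{byparts} are indispensable, since otherwise the derivative of $\rho$ would generate terms that cannot be absorbed uniformly in $\delta$.
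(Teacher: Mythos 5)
Your plan follows the paper's proof essentially step by step: a regularised Galerkin scheme, a baseline energy estimate, the $t$-weighted estimate obtained by testing with $\partial^\bullet w_\delta^M$ and reconstructing that test via the time-differentiated chemical-potential equation, absorption of $\|\partial^\bullet c_\delta^M\|^2$ by testing with $t\partial^\bullet c_\delta^M$, Gronwall, the two limits $M\to\infty$ and $\delta\to 0$, and then the truncation/Moser--Trudinger machinery of Theorem \ref{main1} with $\rho$-weights for parts (ii) and (iii). Your identification of \eqref{der1} and \eqref{byparts} as the key cancellations is exactly the paper's point.

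The one place where ``exactly as in Proposition \ref{proposition}'' is not enough is the Galerkin scheme itself, and this is the genuinely new ingredient of the paper's proof (Proposition \ref{prop_bis}). The paper replaces the plain $L^2$-projection by the $\rho$-weighted projector $P_M^\rho(t)$ built on the transported basis, which by \eqref{rho} is \emph{orthonormal} for the weighted product \eqref{ind}; this yields the commutation $\partial^\bullet P_M^\rho(t)=P_M^\rho(t)\partial^\bullet$ in \eqref{dt}, the uniform gradient bound \eqref{grad}, and the formulation \eqref{mu2bis}. Without it, two steps in your outline do not close. First, the mean value of $c_\delta^M$ is \emph{not} conserved (only $\int_{\Gamma(t)}\rho c_\delta^M$ is), so Poincar\'e cannot be applied to $c_\delta^M$ directly; the paper recovers $|( c_\delta^M)_{\Gamma(t)}|\le C$ by testing with $\eta=P_M^\rho(t)\tfrac1\rho$, which is only admissible thanks to the modified scheme. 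Second, your proposed control of $|m(\rho w_\delta^M,1)|$ ``via the argument built around \eqref{alpha}--\eqref{w}'' cannot be run at the Galerkin level: one only gets a $\delta$-dependent bound there (again by testing with $P_M^\rho(t)\tfrac1\rho$), and the $\delta$-uniform bound is obtained only \emph{after} passing $M\to\infty$, where $\eta=\tfrac1\rho$ becomes admissible. Fortunately this ordering issue is harmless for the energy estimate, because in the weighted model the identity \eqref{en1} has no $g(u,w)$-type term on the right-hand side and closes by Gronwall without any control of the mean of $w$; correspondingly, the correct analogue of \eqref{ees} is simply $\mathcal Q_\rho=\tfrac12(\tgrad w_\delta^M,\tgrad w_\delta^M)_\rho\ge 0$, with no $a_N$, $g$, or $F_{ln}^\delta$ contributions, which is why the constants come out independent of $\delta$ immediately. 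So your strategy is right, but the $\rho$-adapted projector and the mean-value bookkeeping for $c$ and $w$ need to be supplied.
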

\begin{remark}
{\color{black}For this second model the simple assumption  \textbf{A}$_\Phi$ is enough. Indeed, due to the presence of $\rho$ we do not need, e.g., \eqref{es2}. For this reason, it is our belief that the second model seems more natural in the context of evolving surfaces.}
\label{remm}
\end{remark}
\begin{remark}
	As in Remark \ref{sepp}, assuming the regularity stated in Lemma \ref{C1}, also in this case we actually obtain that, for any $\tau>0$, $c\in C^0_{C^0}(\tau,T)$ and thus		\begin{align*}
	\sup_{t\in[\tau,T]}\Vert c \Vert_{C^0(\Gamma(t))}\leq 1-{\xi}.
	\end{align*}
\end{remark}

\subsubsection{Galerkin approximation}
We need a slight revision of the Galerkin approximation scheme. Let us recall \cite[Sec.4.1]{CaeEll21} and consider a basis $\{\chi_j^0: j\in \N\}$ orthonormal in $L^2(\Gamma_0)$ and orthogonal in $H^1(\Gamma_0)$ consisting of smooth functions such that $\chi_1^0$ is constant (for example consider the eigenfunctions of the Laplace-Beltrami operator). We then transport this basis using the flow map. This gives $\{\chi_j^t:=\phi_t(\chi_j^0): j\in \N\}\subset H^1(\Gamma(t))$. Observe that this basis is still an orthonormal basis in $L^2(\Gamma(t))$ endowed with the norm induced by \eqref{ind}. Indeed, thanks to \eqref{rho}, we have
$$
(\chi_i^t,\chi_j^t)_\rho=m(\rho\chi_i^t,\chi_j^t)=\int_{\Gamma_0}\chi_i^0\chi_j^0=\delta_{ij}, \quad \forall\, i,j\in \N.
$$
Notice that the same basis is such that $\overline{\text{span}\{\chi_j^t: j\in \N\}}^{H^1(\Gamma(t))}=H^1(\Gamma(t))$, but we are not able to show any orthogonality property in this space, even endowing the space with the norm induced by \eqref{ind}. We can then introduce the finite dimensional spaces
$$
V_M(t)=\text{span}\{\chi_j^t: 1\leq j\leq M\}\subset H^1(\Gamma(t)),
$$
and, exploiting the orthogonality with respect to the equivalent $L^2$ inner product \eqref{ind}$_1$, we can give an explicit expression to the $L^2$-orthogonal projector $P_M^\rho(t): L^2(\Gamma(t))\to V_M(t)$ as
$$
P_M^\rho(t) f:=\sum_{j=1}^M(f,\chi_j^t)_\rho\chi_j^t=\sum_{j=1}^M\left(\int_{\Gamma_0}\widetilde{f}\chi_j^0\right)\chi_j^t=\phi_t(P_M^0\widetilde{f}),
$$
where $P_M^0=P^\rho_M(0)$ is the orthogonal projector over $V_M(0)$ (endowed with the canonical $L^2$-norm).
Clearly this implies $\Vert P_M^\rho(t) f\Vert_{\rho}\leq \Vert f\Vert_{\rho}$ uniformly in $M$, for any $t>0$.
Following \cite[Sec.7]{AlpCaeDjuEll21}, we now introduce the matrix
$$
\mathbf{A}_t^0:=(D\Phi_t^0)^TD\Phi_t^0+\nu_0\otimes\nu_0,
$$
where $\nu_0$ is the normal to $\Gamma_0$. This matrix is invertible in $\R^{3}$ thanks to the extension in the normal direction. We then have that (see \cite[(7.5)]{AlpCaeDjuEll21})
\begin{align}
\phi_{-t}(\tgrad h)=D\Phi_t^0(\mathbf{A}_t^0)^{-1}\nabla_{\Gamma_0}(\phi_{-t}(h))=D\Phi_t^0(\mathbf{A}_t^0)^{-1}\nabla_{\Gamma_0}\widetilde{h}.
\label{pullback}
\end{align}
{\color{black}Note also that, for any $t\in[0,T]$, 
\begin{align}
\Vert D\Phi_t^0(\mathbf{A}_t^0)^{-1}\Vert_{C^0(\Gamma(t))}\leq C,
    \label{phiA}
\end{align}
being, thanks to assumption \textbf{A}$_\Phi$, $\Phi_{(\cdot)}^0\in C^1([0,T];C^2(\R^3;\R^3))$.}
 Therefore, again by \eqref{rho}, the properties of the orthogonal basis $\{\chi_j^0: j\in \N\}$ in $H^1(\Gamma_0)$, and the compatibility of the space $(H^1(\Gamma(t)),\phi_t)_{t\in[0,T]}$,
\begin{align}
\nonumber\int_{\Gamma(t)}\rho\tgrad  w \cdot \tgrad P_M^\rho(t)f&=\int_{\Gamma_0}\nabla_{\Gamma_0} \widetilde{w}(D\Phi_t^0)^{T}(\mathbf{A}_t^0)^{-T}D\Phi_t^0(\mathbf{A}_t^0)^{-1}\nabla_{\Gamma_0}P_M^0\widetilde{f}\\
\nonumber
&\leq C\Vert\nabla_{\Gamma_0} \widetilde{w}\Vert_{L^2(\Gamma_0)}\Vert\nabla_{\Gamma_0}P_M^0\widetilde{f}\Vert_{L^2(\Gamma_0)}\\
\nonumber
&\leq C\Vert \nabla_{\Gamma_0} \widetilde{w}\Vert_{L^2(\Gamma_0)}\Vert \nabla_{\Gamma_0}\widetilde{f}\Vert_{L^2(\Gamma_0)}\\
&\leq C\Vert\tgrad  {w}\Vert\Vert\tgrad  {f}\Vert,
\label{grad}
\end{align}
for any $w,f\in H^1(\Gamma(t))$. 
We can also prove that the time derivative $\partial^\bullet$ commutes with the projector $P_M^\rho(t)$. Indeed, since $\partial^\bullet\chi_j^t\equiv 0$ for any $j\in\N$, we have
\begin{align}\label{dt}
\begin{split}
\partial^\bullet(P_M^\rho(t)f)&=\sum_{i=1}^M\partial^\bullet (f,\chi_i^t)_\rho\chi_i^t\\&=\sum_{i=1}^M\left((\partial^\bullet f,\chi_i^t)_\rho+m(f\partial^\bullet\rho,\chi_i^t)+g(\rho f,\chi_i^t)\right)\chi_i^t\\&=\sum_{i=1}^M(\partial^\bullet f,\chi_i^t)_\rho\chi_i^t\\
&=P_M^\rho(t)\partial^\bullet f, \quad \forall f\in H^1_{L^2},
\end{split}
\end{align}
where we exploited the fact that $\partial^\bullet\rho=-\rho\tgrad \cdot \mathbf{V}$.

We then consider the Galerkin approximation with the approximated potential $F^\delta$ ($\varphi^\delta=(F_{ln}^\delta)^\prime$) of the original problem. More precisely, given the spaces $V_M$ as in {\color{black}\eqref{VM}}, for each $M\in \N$, find functions $c_\delta^M,w_\delta^M\in L^2_{V_M}$  with $\partial^\bullet c_\delta^M\in L^2_{V_M}$ such that, for any $\eta\in L^2_{V_M}$ and all $t\in[0,T]$,
\begin{align}
&(\partial^\bullet c_\delta^M, \eta)_\rho+(\tgrad w_\delta^M,\tgrad \eta)_\rho=0,\label{CH2bis}\\
&w_\delta^M=P_M^\rho(t)(-\frac{1}{\rho}\Delta_{\Gamma(t)} c_\delta^M+(F^\delta)^\prime(c_\delta^M)),\label{mu2bis}\\
&c_\delta^M(0)=P_M^0c_{0}, \quad\text{ a.e. on } \Gamma_0. \label{inigal}
\end{align}
Notice that this formulation, written with respect to the $\rho$ inner product, is very similar the classical weak formulation of CH equation in bounded domains of $\R^2$ (or $\R^3$).
For this problem we then have
\begin{prop}
	{\color{black}Let Assumption $\textbf{A}_\Phi$ hold.} Then there exists a unique local solution $(c_\delta^M,w_\delta^M)$ to \eqref{CH2bis}-\eqref{inigal}. In particular there exist functions $(c^M,w^M)$ satisfying \eqref{CH2bis}-\eqref{mu2bis} on an interval $[0,t^\star)$, $0\leq t^\star\leq T$, together with \eqref{inigal}. The functions are of the form (omitting for simplicity the dependence on $\delta$)
	$$
	c_\delta^M(t)=\sum_{i=1}^M c_i^M(t)\chi_i^t,\qquad w_\delta^M(t)=\sum_{i=1}^M w_i^M(t)\chi_i^t, \qquad t\in[0,t^\star),
	$$
	with $c_i^M\in C^2([0,t^\star))$ and $w_i^M\in C^2([0,t^\star))$, for every $i\in\{1,\ldots,M\}$.
	\label{prop_bis}
\end{prop}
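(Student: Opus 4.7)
The plan is to mirror the proof of Proposition \ref{proposition} from the first model, exploiting the fact that in this weighted setting the basis $\{\chi_j^t\}$ is orthonormal in the $\rho$-weighted inner product \eqref{ind}$_1$, which makes the mass matrix in the resulting ODE system exactly the identity. First, I substitute the ansatz $c_\delta^M(t)=\sum_{i=1}^M c_i^M(t)\chi_i^t$ and $w_\delta^M(t)=\sum_{i=1}^M w_i^M(t)\chi_i^t$ into \eqref{CH2bis}--\eqref{mu2bis} and test against $\chi_j^t$.

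Using \eqref{byparts} together with $\partial^\bullet\chi_j^t\equiv 0$ and the orthonormality $(\chi_i^t,\chi_j^t)_\rho=\delta_{ij}$, the first equation reduces to
\begin{align*}
\dot c_j^M(t)+\sum_{i=1}^M w_i^M(t)\,A_{ij}(t)=0,\qquad A_{ij}(t):=(\tgrad\chi_j^t,\tgrad\chi_i^t)_\rho,
\end{align*}
so that the mass matrix is the identity and no matrix inversion is needed. For the second equation, since $P_M^\rho(t)$ is the $\rho$-orthogonal projector onto $V_M(t)$ and $\chi_j^t\in V_M(t)$, testing in $(\cdot,\cdot)_\rho$ gives
\begin{align*}
w_j^M(t)=(\tgrad c_\delta^M,\tgrad\chi_j^t)+m\bigl(\rho (F^\delta)'(c_\delta^M),\chi_j^t\bigr),
\end{align*}
after integration by parts on the Laplace--Beltrami term (using that $\Gamma(t)$ is closed). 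Substituting this expression into the first equation eliminates the $w_j^M$'s and yields a closed first-order ODE system for $\mathbf c^M(t)=(c_1^M(t),\dots,c_M^M(t))$ of the form $\dot{\mathbf c}^M=\mathbf G(t,\mathbf c^M)$.

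Next I verify the regularity of the coefficients. The matrix $A(t)$ can be written, using \eqref{pullback} and \eqref{rho}, as
\begin{align*}
A_{ij}(t)=\int_{\Gamma_0}\bigl[D\Phi_t^0(\mathbf A_t^0)^{-1}\nabla_{\Gamma_0}\chi_j^0\bigr]\cdot\bigl[D\Phi_t^0(\mathbf A_t^0)^{-1}\nabla_{\Gamma_0}\chi_i^0\bigr]\,d\Gamma_0,
\end{align*}
so by \eqref{phiA} and Assumption $\mathbf A_\Phi$ it belongs to $C^0([0,T])$ (indeed $C^1$ since $\Phi_{(\cdot)}^0\in C^1([0,T];C^2)$). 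The nonlinear contribution $\sum_j m(\rho(F^\delta)'(\sum_i c_i^M\chi_i^t),\chi_j^t)\chi_j^t$ is $C^{1,1}$ in $\mathbf c^M$ uniformly on bounded sets, since $F^\delta\in C^{1,1}(\R)$ with Lipschitz second derivative, and it depends continuously on $t$ through $\rho$ and the transported basis. Hence $\mathbf G$ is continuous in $t$ and locally Lipschitz in $\mathbf c$.

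Finally, the Cauchy--Lipschitz theorem gives a unique local solution $\mathbf c^M\in C^1([0,t^\star))$. Bootstrapping with the ODE, since the right-hand side $\mathbf G(t,\mathbf c^M(t))$ is $C^1$ in $t$ (the matrices inherit $C^1$-regularity from $\Phi$, and $(F^\delta)'$ is $C^{1}$ in its argument with $\dot{\mathbf c}^M$ continuous), we deduce $c_i^M\in C^2([0,t^\star))$; the coefficients $w_i^M$ are then read off from the closed-form expression above and inherit $C^2$-regularity as well. The main technical check is the time regularity of $A(t)$ and of the nonlinear coupling, which is the only place where the geometric setting enters; once this is in place, the argument is the one-to-one analogue of the proof of Proposition \ref{proposition}, simplified by the built-in diagonalisation of the mass matrix.
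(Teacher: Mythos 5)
Your proposal is correct and follows essentially the same route as the paper: reduce to an ODE system, use the $\rho$-orthonormality of the transported basis to make the mass matrix the identity, check continuity (in fact $C^1$-regularity) in time of the stiffness matrices and the $C^{1,1}$-regularity of $(F^\delta)'$, and conclude by Cauchy--Lipschitz with a bootstrap to $C^2$. The only cosmetic difference is that you verify the time regularity of $A_S^\rho(t)$ by an explicit pullback to $\Gamma_0$ via \eqref{pullback} and \eqref{rho}, whereas the paper invokes the transport formula \eqref{der1}; both are valid under Assumption $\textbf{A}_\Phi$.
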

\begin{proof}
    We consider the matrix form of the equations, where, as before, we set $\mathbf{c}^M(t)=(c_1^M(t),\ldots,c_M^M(t))$ and $\mathbf{w}^M(t)=(w_1^M(t),\ldots,w_M^M(t))$,  \begin{align*}
	&M^\rho(t)\dot{\mathbf{c}}^M(t)+A_S^\rho(t)\mathbf{w}^M(t)=0,\\&
	A_S(t)\mathbf{c}^M(t)+(\mathbf{F}^\delta_\rho)^\prime(\mathbf{{c}}^M(t))-M^\rho(t)\mathbf{w}^M(t)=0.
	\end{align*}
	Here
	\begin{align*}
	&(M^\rho(t))_{ij}=(\chi_i^t,\chi_j^t)_\rho=\delta_{ij}, \\&
	(A_S(t))_{ij}=a_S(t;\chi^t_i,\chi^t_j),\\& (A_S^\rho(t))_{ij}=(\tgrad \chi^t_i,\tgrad \chi^t_j)_\rho,\\&
	(\mathbf{F}^\delta_\rho)^\prime(\mathbf{c}^M(t))_j=((F^\delta)^\prime(c^M(t)),\chi_j^t)_\rho.
	\end{align*}
	We now observe that again these matrices are more regular. Indeed,  $M^\rho_{ij}$ is actually the identity matrix.
	Similarly, we get
	$$
	\frac{d}{dt}(A_S(t))_{ij}=b(\chi^t_i,\chi^t_j)\in C^0([0,T]),
	$$
	and the same goes for $A_S^\rho$, by \eqref{der1}.
	Recalling then that $(F^\delta)^\prime$ is $C^{1,1}(\R)$, the result follows from the general theory of ODEs.
\end{proof}

We establish some a priori estimates for the solutions of the Galerkin approximation.

\begin{prop}
For the approximating solution pair $(u_\delta^M, w_\delta^M)$ we have: 
\begin{align}\label{en}
\sup_{t\in[0,T]}\tilde{E}_{CH}^\rho(c_\delta^M)+\frac{1}{2}\int_0^T\int_{\Gamma(t)}\vert\tgrad  w_\delta^M\vert^2dt\leq C(T) \\
    \|\partial^\bullet c_\delta^M\|_{L^2_{H^{-1}}} \leq C(T), \quad \text{ and } \quad \|w_\delta^M\|_{L^2_{H^1}} \leq C_\delta,
\end{align}
where as before $\tilde{E}_{CH}^\rho:=E^\rho_{CH}+\tilde C$ for some $\tilde C>0$ chosen so that $\tilde{E}_{CH}^\rho\geq 0$.
\end{prop}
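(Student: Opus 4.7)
The plan is to test the Galerkin equations in close analogy with the nonevolving Cahn-Hilliard case, exploiting three pieces of structure simultaneously: the self-adjointness of $P_M^\rho(t)$ with respect to $(\cdot,\cdot)_\rho$; the commutation property \eqref{dt} together with $\md c_\delta^M, c_\delta^M, w_\delta^M \in V_M$, which makes \eqref{mu2bis} effectively behave as a pointwise identity when tested against elements of $V_M$; and the transport identities \eqref{der1}, \eqref{byparts}, which make the $\rho$-weighted framework mimic the nonevolving setting.

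For the \textbf{energy estimate}, I would test \eqref{CH2bis} with $\eta = w_\delta^M$ to obtain $(\md c_\delta^M, w_\delta^M)_\rho + \|\sqrt{\rho}\,\tgrad w_\delta^M\|^2 = 0$. Using \eqref{mu2bis}, self-adjointness, and $\md c_\delta^M = P_M^\rho(t)\md c_\delta^M$,
\begin{align*}
(\md c_\delta^M, w_\delta^M)_\rho = \left(\md c_\delta^M, -\tfrac{1}{\rho}\Delta_\Gamma c_\delta^M + (F^\delta)'(c_\delta^M)\right)_\rho = a_S(c_\delta^M, \md c_\delta^M) + \int_{\Gamma(t)}\rho(F^\delta)'(c_\delta^M)\md c_\delta^M,
\end{align*}
after integration by parts on the closed surface. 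Proposition \ref{prop:transport}(b) gives $a_S(c_\delta^M, \md c_\delta^M) = \tfrac{1}{2}\tfrac{d}{dt}\|\tgrad c_\delta^M\|^2 - \tfrac{1}{2}b(c_\delta^M, c_\delta^M)$, and the ODE $\md\rho = -\rho\,\tgrad\cdot\mathbf V$ combined with the chain rule yields $\int\rho(F^\delta)'(c_\delta^M)\md c_\delta^M = \tfrac{d}{dt}\int\rho F^\delta(c_\delta^M)$. Collecting, $\tfrac{d}{dt}E^\rho_{CH}(c_\delta^M) + \|\sqrt{\rho}\,\tgrad w_\delta^M\|^2 = \tfrac{1}{2}b(c_\delta^M, c_\delta^M) \leq C\|\tgrad c_\delta^M\|^2$; after shifting to $\tilde E_{CH}^\rho$ via the uniform lower bound on $F^\delta$, Gronwall yields \eqref{en}, once $\tilde E_{CH}^\rho(P_M^0 c_0)$ is checked to be uniformly bounded (which follows from $|c_0|\leq 1$ and $c_0 \in H^1(\Gamma_0)$).

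For the \textbf{$H^{-1}$ bound on $\md c_\delta^M$}, I would first test \eqref{CH2bis} with the constant $\chi_1^t \in V_M$ to obtain $(\md c_\delta^M, 1)_\rho = 0$, i.e.\ weighted mass conservation. For $\phi \in H^1(\Gamma(t))$, set $\bar\phi_\rho := (\phi, 1)_\rho/(1, 1)_\rho$ and $\tilde\phi := \phi - \bar\phi_\rho$, so that $(\tilde\phi, 1)_\rho = 0$, $\tgrad\tilde\phi = \tgrad \phi$ and $\|\tilde\phi\|_{H^1}\leq C\|\phi\|_{H^1}$. By self-adjointness and $\md c_\delta^M = P_M^\rho(t)\md c_\delta^M$,
\begin{align*}
(\md c_\delta^M, \phi)_\rho = (\md c_\delta^M, \tilde\phi)_\rho = (\md c_\delta^M, P_M^\rho(t)\tilde\phi)_\rho = -(\tgrad w_\delta^M, \tgrad P_M^\rho(t)\tilde\phi)_\rho
\end{align*}
by \eqref{CH2bis} with $\eta = P_M^\rho(t)\tilde\phi \in V_M$, and the key estimate \eqref{grad} bounds the right-hand side by $C\|\tgrad w_\delta^M\|\|\tgrad\phi\|$. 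Equivalence of $\|\cdot\|_{\rho,-1}$ and $\|\cdot\|_{H^{-1}}$ via \eqref{bounds}, together with the energy estimate, then gives the bound on $\|\md c_\delta^M\|_{L^2_{H^{-1}}}$. For the \textbf{$H^1$ bound on $w_\delta^M$}, the gradient part follows from the energy estimate; for the mean, evaluating \eqref{mu2bis} in the $(\cdot, 1)_\rho$-pairing yields $(w_\delta^M, 1)_\rho = \int_{\Gamma(t)}\rho(F^\delta)'(c_\delta^M)$ (the Laplacian integrates to zero on the closed surface), and since $(F^\delta)'$ has linear growth with a $\delta$-dependent constant, the energy bound on $\|c_\delta^M\|$ gives $|(w_\delta^M, 1)_\rho| \leq C_\delta$; Poincaré's inequality closes the argument.

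The main obstacle is the $H^{-1}$ estimate on $\md c_\delta^M$: one must bridge the $\rho$-weighted inner product, under which the Galerkin system is naturally posed and $P_M^\rho(t)$ is self-adjoint, with the standard duality pairing of the target norm. Removing the constant mode through weighted mass conservation, combined with \eqref{grad} (which controls $\nabla_{\Gamma_0} P_M^0 \tilde\phi$ by $\nabla_{\Gamma_0}\tilde\phi$ after pulling back to $\Gamma_0$), is what makes the argument close; the $\delta$-dependence in the $w_\delta^M$ bound simply reflects the absence of a uniform-in-$\delta$ bound on the mean of the chemical potential at this stage of the analysis.
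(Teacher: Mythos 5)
Your proposal is correct and follows essentially the same strategy as the paper: testing with $w_\delta^M$ and using the transport identities \eqref{der1}, \eqref{byparts} for the energy law, then exploiting the self-adjointness of $P_M^\rho(t)$, the commutation \eqref{dt} and the key estimate \eqref{grad} for the $H^{-1}$ bound, and the $\delta$-dependent linear growth of $(F^\delta)'$ plus Poincar\'e for the mean of $w_\delta^M$. The only (immaterial) differences are bookkeeping ones: the paper inserts $\eta/\rho$ into the projector rather than subtracting the weighted mean, and controls the unweighted mean of $w_\delta^M$ by testing with $P_M^\rho(t)\tfrac1\rho$ rather than the weighted mean by testing with $1$.
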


\begin{proof}
We consider the Galerkin approximation of Proposition \ref{prop_bis}. First notice that we have the conservation of the total mass, by choosing $\eta\equiv 1$:
\begin{align}
\frac{d}{dt}\int_{\Gamma(t)}\rho c_\delta^M =\int_{\Gamma(t)}\rho \partial^\bullet c_\delta^M\equiv 0.
\label{mass3}
\end{align}
Arguing as in \cite[(6.2.1)]{CaeEll21} we can obtain
\begin{align}
\frac{d}{dt}E_{CH}^\rho(c_\delta^M)+\int_{\Gamma(t)}\rho\vert\tgrad  w_\delta^M\vert^2dt=b(c_\delta^M,c_\delta^M).
\label{en1}
\end{align}
Observe also that, by the conservation of total mass, {\color{black} see \eqref{mass2} and \eqref{mass3}}, $$(\widetilde{c}_\delta^M)_{\Gamma_0}\equiv \dfrac{\int_{\Gamma(t)}\rho c_\delta^M}{\vert\Gamma_0\vert}\equiv (c_0)_{\Gamma_0}.$$ 
{\color{black}In the end we easily get} 
\begin{align}
\frac{d}{dt}\tilde{E}^\rho_{CH}(c_\delta^M)+\frac{1}{2}\int_{\Gamma(t)}\rho\vert\tgrad  w_\delta^M\vert^2dt\leq C\tilde{E}^\rho_{CH}(c_\delta^M),
\label{en3}
\end{align}
where $\tilde{E}^\rho_{CH}(c_\delta^M):={E}^\rho_{CH}(c_\delta^M)+C$, so that $\tilde{E}^\rho_{CH}(c_\delta^M)\geq0$. Therefore, an application of Gronwall's Lemma gives
\begin{align}
\sup_{t\in[0,T]}\tilde{E}_{CH}^\rho(c_\delta^M)+\frac{1}{2}\int_0^T\int_{\Gamma(t)}\vert\tgrad  w_\delta^M\vert^2dt\leq C(T)\tilde{E}^{\rho}_{CH}(P_M^0c_{0}),
\label{enalpha}
\end{align}	
and, recalling the properties of $P_M^0$ (see, e.g., \cite[Lemma 5.6]{CaeEll21}), we have
\begin{align}
\tilde{E}^{\rho}_{CH}(P_M^0c_{0})\leq CT.
\label{ena}
\end{align}
This clearly allows us to extend the maximal time from $t^\star$ to $T$.
Concerning the estimate of the mean value of $c$, since in this model this quantity is not conserved, being conserved the product $\rho c$, we can observe, as in \cite{CaeEll21}, that
\begin{align*}
\frac{d}{dt}\int_{\Gamma(t)}c_\delta^M=\int_{\Gamma(t)}\partial^\bullet c_\delta^M+\int_{\Gamma(t)}c_\delta^M\tgrad \cdot \mathbf{V}.
\end{align*}
If we now take $\eta=P_M^\rho(t)\frac{1}{\rho}$ in \eqref{CH2bis}, we have, being $\partial^\bullet c^M_\delta\in L^2_{V_M}$ and $P_M^\rho(t)$ self-adjoint,
\begin{align*}
&\left(\partial^\bullet c_\delta^M, P_M^\rho(t)\frac{1}{\rho}\right)_\rho=m(\partial^\bullet c_\delta^M, 1)=-\left(\tgrad w_\delta^M,\tgrad P_M^\rho(t)\frac{1}{\rho}\right)_\rho.
\end{align*}
Therefore, integrating by parts $\int_{\Gamma(t)}c_\delta^M\tgrad \cdot \mathbf{V}$, recalling that $\Gamma(t)$ is closed, we get
\begin{align*}
\frac{d}{dt}\int_{\Gamma(t)}c_\delta^M=-\left(\tgrad w_\delta^M,\tgrad P_M^\rho(t)\frac{1}{\rho}\right)_\rho-\int_{\Gamma(t)}\tgrad c_\delta^M\cdot \mathbf{V},
\end{align*}
so that
\begin{align*}
&\int_{\Gamma(t)}c_\delta^M=\int_{\Gamma_0}P^0_Mc_0-\int_0^t\left(\nabla_{\Gamma(s)}w_\delta^M,\nabla_{\Gamma(s)}P_M^\rho(s)\frac{1}{\rho}\right)_\rho ds-\int_0^t\int_{\Gamma(s)}\tgrad c_\delta^M\cdot \mathbf{V} ds.
\end{align*}
Thus, by \eqref{bounds}, {\color{black}\eqref{ena}}, the properties of $P_M^0$, and \eqref{grad}, we deduce
\begin{align*}
&\left\vert\int_{\Gamma(t)}c_\delta^M\right\vert\leq C+C\int_0^t\left\Vert\nabla_{\Gamma(s)}w^M_\delta\right\Vert\left\Vert\nabla_{\Gamma(s)}\frac{1}{\rho}\right\Vert ds+C(T)\\&\leq C\left(1+\int_0^t\Vert \nabla_{\Gamma}w_\delta^M\Vert ds\right)+C(T)\leq C(T).
\end{align*}
Combining this result with \eqref{enalpha} and Poincaré's inequality, we infer that
\begin{align}
\Vert c^M_\delta\Vert_{L^\infty_{H^1}}\leq C(T),
\label{en2}
\end{align}
independently of $M$ and $\delta$.
Observe now that, by \eqref{grad}, we can find a uniform estimate for $\partial^\bullet c^M_\delta$. Indeed we can write, for any $\eta \in L^2_{H^1}$, being $P^\rho_M(t)$ self-adjoint (with respect to \eqref{ind}$_1$) and by \eqref{grad}{ \color{black} and  \eqref{en2},}
\begin{align*}
{\color{black}m_\star(\partial^\bullet c^M_\delta, \eta)}&=(P_M^\rho(t)\partial^\bullet c^M_\delta,\eta)=\left(P_M^\rho(t)\partial^\bullet c^M_\delta,\frac{1}{\rho}\eta\right)_\rho\\&=\left(\partial^\bullet c^M_\delta,P_M^\rho(t){\color{black}\left(\frac{1}{\rho}\eta\right)}\right)_\rho\\&=-\left(\tgrad w_\delta^M,\tgrad P_M^\rho(t){\color{black}\left(\frac{1}{\rho}\eta\right)}\right)_\rho\\&\leq C\Vert\tgrad w_\delta^M\Vert\left\Vert\tgrad \frac{\eta}{\rho}\right\Vert+C\Vert c_\delta^M\Vert\left\Vert\tgrad \frac{\eta}{\rho}\right\Vert\\&\leq C{\color{black}\left(\Vert\tgrad w_\delta^M\Vert+1\right)\Vert\eta\Vert_{H^1(\Gamma(t))}},
\end{align*}
which implies, by \eqref{enalpha},
\begin{align}
\Vert\partial^\bullet c^M_\delta\Vert_{L^2_{H^{-1}}}\leq C(T),
\label{h-1}
\end{align}
independently of $M$ and $\delta$. We now need a control over the mean value of $w_\delta^M$. This can be obtained by testing \eqref{mu2bis} with $\eta=P_M^\rho(t)\frac{1}{\rho}$ being $w\in L^2_{V_M}$. On account of \eqref{grad}, we have
\begin{align*}
\left(w_\delta^M,\frac{1}{\rho}\right)_\rho=\left(w_\delta^M,P_M^\rho(t)\frac{1}{\rho}\right)_\rho&=\left(-\frac{1}{\rho}\Delta c_\delta^M, P_M^\rho(t)\frac{1}{\rho}\right)_\rho+\left(F^\prime_\delta(c_\delta^M),P_M^\rho(t)\frac{1}{\rho}\right)_\rho\\&=m\left(\tgrad  c_\delta^M, \tgrad P_M^\rho(t)\frac{1}{\rho}\right)+\left(F^\prime_\delta(c_\delta^M),P_M^\rho(t)\frac{1}{\rho}\right)_\rho\\&\leq C\Vert\tgrad c_\delta^M\Vert\left\Vert\tgrad \left(P^\rho_M(t)\frac{1}{\rho}\right)\right\Vert+C\Vert F^\prime_\delta(c_\delta^M)\Vert\left\Vert P_M^\rho(t)\frac{1}{\rho}\right\Vert\\&\leq C\Vert\tgrad c_\delta^M\Vert\left\Vert\tgrad \frac{1}{\rho}\right\Vert+C_\delta\Vert c_\delta^M\Vert\leq C_\delta,
\end{align*}
by \eqref{bounds} and \eqref{enalpha}. Here we have also applied  the fact that $\vert F^\prime_\delta(c_\delta^M)\vert\leq C_\delta\vert c_\delta^M\vert$, being ${\color{black}\vert F^{\prime\prime}_\delta\vert}\leq C_\delta$ and $F_\delta^\prime(0)=F^\prime(0)=0$. In the Galerkin scheme we are not able to retrieve a uniform-in-$\delta$ estimate for the mean value of $w_\delta^M$. Indeed, we should be able to control the $L^\infty(\Gamma(t))$ norm of $\rho P_M^\rho(t)\frac{1}{\rho}$ and then control $\int_{\Gamma(t)}\vert F^\prime_\delta(c_\delta^M)\vert$, but this does not seem feasible. Therefore, we will need to pass to the limit in $M$ first. Then this control will be obtained independently of $\delta$.
The above bound entails, thanks to Poincaré's inequality combined with \eqref{en},
\begin{align}
\Vert w_\delta^M\Vert_{L^2_{H^1}}\leq C_\delta.
\label{ww}
\end{align}
\end{proof}

\subsubsection{Proof of Theorem \ref{theo}}
\label{sep2}
\underline{Part (i).} We now need to find higher-order estimates. In particular, we set $\eta=\partial^\bullet w_\delta^M\in L^2_{V_M}$ in \eqref{CH2bis}, to get
\begin{align}
(\partial^\bullet c_M^\delta, \partial^\bullet w^M_\delta)_\rho+(\tgrad w_M^\delta,\tgrad \partial^\bullet w_M^\delta)_\rho=0.
\label{E1}
\end{align}
Observe that, by \eqref{der1},
\begin{align}
\frac{1}{2}\frac{d}{dt}(\tgrad w_\delta^M,\tgrad w_\delta^M)_\rho=(\tgrad w_M^\delta,\tgrad \partial^\bullet w_M^\delta)_\rho+\int_{\Gamma(t)}\rho\widetilde{B}(\mathbf{V})\tgrad w_\delta^M\cdot \tgrad w_\delta^M.
\label{E2}
\end{align}
On the other hand, by Proposition \ref{prop:transport}, we have, for any $\eta\in L^2_{V_M}$ such that $\partial^\bullet\eta\in L^2_{V_M}$,
\begin{align*}
\frac{d}{dt}m(\tgrad c_\delta^M,\tgrad \eta)&=m(\tgrad \partial^\bullet c_\delta^M,\tgrad \eta)+m(\tgrad  c_\delta^M,\tgrad \partial^\bullet\eta)+\int_{\Gamma(t)}{B}(\mathbf{V})\tgrad c_\delta^M\cdot\tgrad \eta
\end{align*}
and, by \eqref{byparts},
\begin{align*}
\frac{d}{dt}(w_\delta^M,\eta)_\rho=(\partial^\bullet w_\delta^M,\eta)_\rho+(w,\partial^\bullet\eta)_\rho.
\end{align*}
Furthermore, we have, using again {\color{black}\eqref{byparts}},
\begin{align*}
\frac{d}{dt}(F^\prime_\delta(c_\delta^M),\eta)_\rho=(F^{\prime\prime}_\delta(c_\delta^M),\partial^\bullet c_\delta^M\eta)_\rho+(F^\prime_\delta(c_\delta^M),\partial^\bullet \eta)_\rho.
\end{align*}
We now recall that 
\begin{align*}
\frac{d}{dt}(w_\delta^M,\eta)_\rho=\frac{d}{dt}m(\tgrad c_\delta^M,\tgrad \eta)+\frac{d}{dt}(F^\prime_\delta(c_\delta^M),\eta)_\rho,
\end{align*}
thus
\begin{align*}
(\partial^\bullet w_\delta^M,\eta)_\rho+(w,\partial^\bullet\eta)_\rho&=m(\tgrad \partial^\bullet c_\delta^M,\tgrad \eta)+m(\tgrad  c_\delta^M,\tgrad \partial^\bullet\eta)\\&+\int_{\Gamma(t)}{B}(\mathbf{V})\tgrad c_\delta^M\cdot\tgrad \eta\\&+(F^{\prime\prime}_\delta(c_\delta^M),\partial^\bullet c_\delta^M\eta)_\rho+(F^\prime_\delta(c_\delta^M),\partial^\bullet \eta)_\rho.
\end{align*}
By noticing that, being $\partial^\bullet\eta\in L^2_{V_M}$,
$$
(w,\partial^\bullet\eta)_\rho=m(\tgrad  c_\delta^M,\tgrad \partial^\bullet\eta)+(F^\prime_\delta(c_\delta^M),\partial^\bullet \eta)_\rho,
$$
we infer, choosing $\eta=\partial^\bullet c_\delta^M$ (which is sufficiently regular thanks to Proposition \ref{prop_bis}),
\begin{align*}
(\partial^\bullet c_\delta^M,\partial^\bullet w_\delta^M)_\rho=(F^{\prime\prime}_\delta(c_\delta^M)\partial^\bullet c_\delta^M,\partial^\bullet c_\delta^M)_\rho+\Vert\tgrad \partial^\bullet c_\delta^M\Vert^2+ b(\mathbf V; c_\delta^M, \md c_\delta^M) 
\end{align*}
From these results, together with \eqref{E1} and \eqref{E2}, we eventually get
\begin{align*}
&\frac{1}{2}\frac{d}{dt}(\tgrad w_\delta^M,\tgrad w_\delta^M)_\rho+(F^{\prime\prime}_\delta(c_\delta^M)\partial^\bullet c_\delta^M,\partial^\bullet c_\delta^M)_\rho+\Vert\tgrad \partial^\bullet c_\delta^M\Vert^2\\&=-\int_{\Gamma(t)}{B}(\mathbf{V})\tgrad c_\delta^M\cdot\tgrad \partial^\bullet c_\delta^M+\int_{\Gamma(t)}\rho\widetilde{B}(\mathbf{V})\tgrad w_\delta^M\cdot \tgrad w_\delta^M,
\end{align*}
and, recalling the definition of {\color{black}$F_\delta^{\prime\prime}$} and $\varphi_\delta^\prime\geq 0$, we infer
\begin{align}
\nonumber\frac{1}{2}\frac{d}{dt}(\tgrad w_\delta^M,\tgrad w_\delta^M)_\rho+\Vert\tgrad \partial^\bullet c_\delta^M\Vert^2&\leq \frac{1}{2}\frac{d}{dt}(\tgrad w_\delta^M,\tgrad w_\delta^M)_\rho+\frac{\theta}{2}(\varphi_\delta^\prime(c_\delta^M),\partial^\bullet c_\delta^M\partial^\bullet c_\delta^M)_\rho+\Vert\tgrad \partial^\bullet c_\delta^M\Vert^2\\&\nonumber=(\partial^\bullet c_\delta^M,\partial^\bullet c_\delta^M)_\rho-\int_{\Gamma(t)}{B}(\mathbf{V})\tgrad c_\delta^M\cdot\tgrad \partial^\bullet c_\delta^M\\
&\hskip 2cm +\int_{\Gamma(t)}\rho\widetilde{B}(\mathbf{V})\tgrad w_\delta^M\cdot \tgrad w_\delta^M.
\label{ineq}
\end{align}
By \eqref{bounds}, \eqref{enalpha}, interpolation and standard inequalities, we then have
\begin{align*}
&(\partial^\bullet c_\delta^M,\partial^\bullet c_\delta^M)_\rho-\int_{\Gamma(t)}{B}(\mathbf{V})\tgrad c_\delta^M\cdot\tgrad \partial^\bullet c_\delta^M+\int_{\Gamma(t)}\rho\widetilde{B}(\mathbf{V})\tgrad w_\delta^M\cdot \tgrad w_\delta^M\\&\leq C_1\Vert\partial^\bullet c^M_\delta\Vert^2+C+\frac{1}{4}\Vert\tgrad \partial^\bullet c_\delta^M\Vert^2+C\Vert\tgrad  w_\delta^M\Vert^2.
\end{align*}
{\color{black}Notice that we have only exploited assumption \textbf{A}$_\Phi$ (see also Remark \ref{remm}). Here $C_1>0$ is a positive constant independent of $\delta,M$.}
We now test \eqref{CH2bis} by $\eta=\partial^\bullet c_\delta^M$, obtaining, for $\kappa$ suitably small to be chosen later on,
$$
C_2\Vert \partial^\bullet c_\delta^M\Vert^2 \leq \kappa\Vert\tgrad \partial^\bullet c_\delta^M\Vert^2+C(1+\Vert\tgrad w^M_\delta\Vert^2),
$$
{\color{black} where $C_2>0$ is independent of $\delta,M$.}
Adding this inequality, multiplied by $\omega=2\frac{C_1}{C_2}$, and \eqref{ineq} together, choosing $\kappa=\frac{C_2}{8C_1}$, and recalling \eqref{bounds}, we find
 \begin{align}
 \frac{d}{dt}\mathcal{Q}_\rho+\frac{1}{2}\Vert\tgrad \partial^\bullet c_\delta^M\Vert^2+C_1\Vert \partial^\bullet c_\delta^M\Vert^2 \leq C(1+(\tgrad w_\delta^M,\tgrad w_\delta^M)_\rho),
\label{grad1}
\end{align}
where $C$ and $C_1$ in this estimate do not depend on $\delta$ (they rely only on \eqref{enalpha}), and
$$
\mathcal{Q}_\rho:=\frac{1}{2}(\tgrad w_\delta^M,\tgrad w_\delta^M)_\rho {\color{black}\geq0.}
$$
We then multiply \eqref{grad1} by $t$. This gives
	\begin{align}
	 &\frac{d}{dt}{\color{black}\left(t\mathcal{Q}_\rho\right)}+\frac{t}{2}\Vert\tgrad \partial^\bullet c_\delta^M\Vert^2+C_1t\Vert \partial^\bullet c_\delta^M\Vert^2 \leq C(t+t\mathcal{Q}_\rho(t))+\mathcal{Q}_\rho(t).
	\label{grad2}
	\end{align}
	Then, thanks to \eqref{enalpha}, we have $\mathcal{Q}_\rho\in L^1(0,T)$, so that we can apply Gronwall's Lemma and infer
	\begin{align}
	\Vert\sqrt{t}\tgrad w_\delta^M\Vert_{L^\infty_{L^2}}+\Vert\sqrt{t} \partial^\bullet u_\delta^M\Vert_{L^2_{H^1}}\leq C(T).
	\label{high}
	\end{align}
	It is crucial to stress again that also the above constant does not depend on $\delta$.
	
Having this uniform (in $M$) regularity, we can easily pass to the limit as $M\rightarrow\infty$ and obtain, by compactness arguments, the existence of a solution $(c_\delta,w_\delta)$ such that, for any $\eta\in L^2_{H^1}$,
\begin{align}
&(\partial^\bullet c_\delta, \eta)_\rho+(\tgrad w_\delta,\tgrad \eta)_\rho=0,\label{CH2bis2}\\&
(w_\delta,\eta)_\rho=m(\tgrad  c_\delta,\nabla_{\Gamma}\eta)+((F^\delta)^\prime(c_\delta),\eta)_\rho,\label{mu2bis2}
\end{align}
and $c_\delta(0)=c_{0}$ almost everywhere in $\Gamma_0$. In particular, we have the following convergences (see \eqref{enalpha}, \eqref{h-1}, and \eqref{ww})
\begin{align*}
&c_\delta^M\overset{\ast}{\rightharpoonup} c_\delta,\quad \text{ in } L^\infty_{H^1},\\&
\partial^\bullet c_\delta^M{\rightharpoonup} \partial^\bullet c_\delta,\quad \text{ in } L^2_{H^{-1}},\\&
w_\delta^M{\rightharpoonup} w_\delta,\quad \text{ in } L^2_{H^1},
\end{align*}
which also imply, by \eqref{high},
\begin{align*}
&\sqrt{t}\partial^\bullet c_\delta^M{\rightharpoonup} \sqrt{t}\partial^\bullet c_\delta,\quad \text{ in } L^2_{H^1},\\&
\sqrt{t}\tgrad w_\delta^M\overset{\ast}{\rightharpoonup} \sqrt{t}\tgrad w_\delta,\quad \text{ in } L^\infty_{L^2}.
\end{align*}
As a consequence, from \eqref{enalpha}, \eqref{en2}, \eqref{h-1}, and \eqref{high} we can obtain the following bounds, which are still independent of $\delta$,
\begin{align}
\Vert c_\delta\Vert_{L^\infty_{H^1}}+\Vert \tgrad w_\delta\Vert_{L^2_{L^2}}+\Vert \partial^\bullet c_\delta\Vert_{L^2_{H^{-1}}}+\Vert\sqrt{t}w_\delta\Vert_{L^\infty_{L^2}}+\Vert\sqrt{t} \partial^\bullet c_\delta\Vert_{L^2_{H^1}}\leq C(T).
\label{high2}
\end{align}
We are left to find an estimate for $(w_\delta)_{\Gamma_0}$ which is independent of $\delta$. Being now $\eta=\frac{1}{\rho}\in L^2_{H^1}$, we can use it as a test function in \eqref{CH2bis2}, to get
\begin{align*}
\int_{\Gamma(t)} w_\delta=\left(w_\delta,\frac{1}{\rho}\right)_\rho=\left(\nabla_\Gamma c_\delta, \nabla_\Gamma\frac{1}{\rho}\right)+\left(F_\delta^\prime(c_\delta),\frac{1}{\rho}\right)_\rho.
\end{align*}
At this point we can repeat word by word the proof in \cite[Sec.1.6]{CaeEll21}, exploiting the fact that $(\widetilde{c})_{\Gamma_0}=(c_0)_{\Gamma_0}$ and $\vert(c_0)_{\Gamma_0}\vert<1$, and obtaining
$$
\left\vert \int_{\Gamma(t)}w_\delta\right\vert\leq C(1+\Vert\tgrad w_\delta\Vert).
$$
This result finally allows, by Poincaré's inequality, to infer
\begin{align}
\Vert w_\delta\Vert_{L^2_{H^1}}+\Vert \sqrt{t}w_\delta\Vert_{L^\infty_{H^1}}\leq C(T).
\label{final}
\end{align}
Therefore, we can pass to the limit with respect to $\delta$. Again by standard compactness arguments, we obtain a solution $(c,w)$ to \eqref{phi}-\eqref{ini}. In particular, we can retrieve, exactly as done for the first model in the proof of \cite[Thm.5.14]{CaeEll21}, the bound $\vert c\vert< 1$ almost everywhere on $\Gamma(t)$, for almost any $t\in[0,T]$. Clearly, again by sequential lower semicontinuity, we have the following bounds on the final solution (which is also unique by Theorem \ref{existence2}):
\begin{align}
\Vert c\Vert_{L^\infty_{H^1}}+\Vert w\Vert_{L^2_{H^1}}+\Vert \partial^\bullet c\Vert_{L^2_{H^{-1}}}+\Vert\sqrt{t}w\Vert_{L^\infty_{H^1}}+\Vert\sqrt{t} \partial^\bullet c\Vert_{L^2_{H^1}}\leq C(T).
\label{high3}
\end{align}

\underline{Part (ii).} Concerning the strict separation property, we use an argument similar to the one used in the proof of Theorem \ref{main1}. Let us fix $\tau>0$. We have that $\Vert w\Vert_{H^1(\Gamma(t))}\leq C$ for almost any $t\geq\tau$. Then we set $c_k=h_k(c)$, where $h_k$ is defined in \eqref{hk}. Being $c$ in $L^\infty_{H^1}$, we have
$$
\nabla_{\Gamma} c_k=\chi_{[-1+\frac{1}{k},1-\frac{1}{k}]}(c)\nabla_{\Gamma} c.
$$
Accordingly, for any $k>1$ and $p\geq 2$, $f_k=\left\vert \frac{\theta}{2}\varphi (c_k)\right\vert^{p-2}\frac{\theta}{2}\varphi (c_k)$ is well defined and belongs to $L^\infty_{H^1}$ and satisfies
$$
\nabla_{\Gamma}\left(\left\vert \frac{\theta}{2}\varphi (c_k)\right\vert^{p-2}\frac{\theta}{2}\varphi (c_k)\right)=(p-1)\left\vert \frac{\theta}{2}\varphi (c_k)\right\vert^{p-2}\frac{\theta}{2}\varphi^\prime(c_k)\nabla_{\Gamma} c_k.
$$
If we now set $\eta=\left\vert \frac{\theta}{2}\varphi (c_k)\right\vert^{p-2}\frac{\theta}{2}\varphi(c_k)$ in \eqref{mu6} then we infer that
\begin{align*}
&(p-1)\int_{\Gamma(t)}\left\vert \frac{\theta}{2}\varphi (c_k)\right\vert^{p-2}\frac{\theta}{2}\varphi^\prime(c_k)\nabla_{\Gamma}c\cdot \nabla_{\Gamma}c_k+\int_{\Gamma(t)}\rho\left\vert \frac{\theta}{2}\varphi (c_k)\right\vert^{p-2}\frac{\theta}{2}\varphi (c_k)\frac{\theta}{2}\varphi (c)=\int_{\Gamma(t)} \rho\widehat{w}\left\vert \frac{\theta}{2}\varphi (c_k)\right\vert^{p-2}\frac{\theta}{2}\varphi (c_k),
\end{align*}
where $ \widehat{w}=w+c$. Being $F_{ln}$ strictly convex, the first term in the left-hand side is nonnegative. Since $\varphi$ is increasing we infer
\begin{align}
\varphi (c_k)^2\leq \varphi (c)\varphi(c_k), \quad \forall k>1.
\label{uk2}
\end{align}
Regarding the right-hand side, by the Sobolev embedding $H^1(\Gamma(t))\hookrightarrow L^p(\Gamma(t))$ and \eqref{bounds}, we easily get
\begin{align*}
&\int_{\Gamma(t)} \rho \widehat{w}\left\vert \frac{\theta}{2}\varphi (c_k)\right\vert^{p-2}\frac{\theta}{2}\varphi(c_k)\leq \frac{1}{2C_\rho}\left\Vert\frac{\theta}{2}\varphi (c_k)\right\Vert^p_{L^p(\Gamma(t))}+C\Vert \widehat{w}\Vert^p_{L^p(\Gamma(t))}\leq \frac{1}{2C_\rho}\left\Vert\frac{\theta}{2}\varphi (c_k)\right\Vert^p_{L^p(\Gamma(t))}+C_p\Vert \widehat{w}\Vert^p_{H^1(\Gamma(t))},
\end{align*}
with $C_p>0$ depending on $p$.
Now, collecting the above estimates, being ${c}\in L^\infty_{H^1}$ and $\Vert w\Vert_{H^1(\Gamma(t))}\leq C$ for almost any $t\geq\tau$, and recalling, by \eqref{bounds}, that $0<\frac{1}{C_\rho}\leq \rho$, we immediately deduce (see \eqref{conv})
\begin{align}
\esssup_{t\in[\tau,T]}\Vert\varphi(c_k)\Vert_{L^p(\Gamma(t))}\leq C_p(T,\tau,p),\quad \forall p\in[2,\infty).
\label{phip2}
\end{align}
Consider now $g_k=\frac{\theta}{2}\varphi(c_k)e^{L\frac{\theta}{2}\vert \varphi(c_k)\vert}$, for some arbitrary $L>0$, and observe that
\begin{align*}
\nabla_{\Gamma}\left(\frac{\theta}{2}\varphi(c_k)e^{L\frac{\theta}{2}\vert\varphi(c_k)\vert}\right)=\frac{\theta}{2}\varphi^\prime(c_k)\left(1+L\frac{\theta}{2}\left\vert\varphi(c_k)\right\vert\right)e^{L\frac{\theta}{2}\vert\varphi(c_k)\vert}\nabla_{\Gamma} c_k.
\end{align*}
Therefore, using \eqref{mu6} with $\eta=g_k$, we find
\begin{align*}
&\int_{\Gamma(t)}\nabla_{\Gamma}c\cdot \nabla_{\Gamma}c_k \frac{\theta}{2}\varphi^\prime(c_k)\left(1+L\frac{\theta}{2}\left\vert\varphi(c_k)\right\vert\right)e^{L\frac{\theta}{2}\vert\varphi(c_k)\vert}+\int_{\Gamma(t)}\rho\frac{\theta}{2}\varphi(c)\frac{\theta}{2}\varphi(c_k)e^{L\frac{\theta}{2}\vert\varphi(c_k)\vert}=\int_{\Gamma(t)} \rho\widehat{w}\frac{\theta}{2}\varphi(c_k)e^{L\frac{\theta}{2}\vert \varphi(c_k)\vert}.
\end{align*}
Observe that the first term on the left-hand side is nonnegative. Hence, exploiting again \eqref{uk2} and \eqref{bounds}, we obtain in the end
\begin{align*}
\frac{1}{C_\rho}\int_{\Gamma(t)}\left(\frac{\theta}{2}\varphi(c_k)\right)^2e^{L\frac{\theta}{2}\vert\varphi(c_k)\vert}\leq \int_{\Gamma(t)}\rho \widehat{w}\frac{\theta}{2}\varphi(c_k)e^{L\frac{\theta}{2}\vert \varphi(c_k)\vert}
\end{align*}
By Lemma \ref{gener}, with $\rho_\star=\frac{1}{C_\rho}$, we get
\begin{align}
\int_{\Gamma(t)}\rho\vert  \widehat{w}\vert \left\vert \frac{\theta}{2}\varphi(c_k)\right\vert e^{L\left\vert \frac{\theta}{2}\varphi(c_k)\right\vert}\leq \frac{1}{2C_\rho}\int_{\Gamma(t)}\left\vert \frac{\theta}{2}\varphi(c_k)\right\vert^2 e^{L\left\vert \frac{\theta}{2}\varphi(c_k)\right\vert}+\int_{\Gamma(t)}e^{N\rho\vert  \widehat{w}\vert},
\label{pp3}
\end{align}
implying
\begin{align}
\label{MTineq3}
\frac{1}{2C_\rho}\int_{\Gamma(t)}\left\vert \frac{\theta}{2}\varphi(c_k)\right\vert^2 e^{L\left\vert \frac{\theta}{2}\varphi(c_k)\right\vert}\leq \int_{\Gamma(t)}e^{\rho N\vert  \widehat{w}\vert},
\end{align}
for any $L>0$ and some $N=N(L,C_\rho)$.
Exploiting \eqref{bounds}, the control over $J_t^0$ and then applying Lemma \ref{Trudi} with $u=C_\rho N \widetilde{\widehat{w}}$ and the manifold $\mathcal{M}=\Gamma_0$ (with the corresponding metric) we deduce
\begin{align*}
&\int_{\Gamma(t)}e^{\rho N\vert \widehat{w}\vert}\leq\int_{\Gamma(t)}e^{C_\rho N\vert \widehat{w}\vert}=\int_{\Gamma_0}e^{C_\rho N\vert\widetilde{\widehat{w}} \vert}J_t^0d\Gamma_0\leq C\int_{\Gamma_0}e^{C_\rho N\vert\widetilde{\widehat{w}} \vert}d\Gamma_0\leq Ce^{CN^2\Vert \widetilde{\widehat{w}}\Vert^2_{H^1(\Gamma_0)}}\leq Ce^{CN^2\Vert {\widehat{w}}\Vert^2_{H^1(\Gamma(t))}},
\end{align*}
being $(H^1(\Gamma(t)),\phi_t)_{t\in[0,T]}$ a compatible space.
On account of \eqref{phiprime}, taking $L=pC$ in \eqref{MTineq3} and recalling that $\Vert \widehat{w}\Vert_{H^1(\Gamma(t))}\leq C$ for almost any $t\geq\tau$, we end up with
$$
\Vert\varphi^\prime(c_k)\Vert_{L^p(\Gamma(t))}\leq C_p(T,\tau,p),
$$
which implies (see proof of Theorem \ref{main1})
\begin{align}
\esssup_{t\in[\tau,T]}\Vert\varphi^\prime(c)\Vert_{L^p(\Gamma(t))}\leq C_p(T,\tau,p),\quad \forall p\in[2,\infty).
\label{phip3}
\end{align}
Therefore, thanks to elliptic regularity, being $c\in L^\infty_{H^1}$ we get (see also \eqref{bounds})
$$
\Vert c\Vert_{H^2(\Gamma(t))}\leq \left(C+\Vert\Delta_{\Gamma(t)}c\Vert\right)\leq C\left(1+\Vert w\Vert+\Vert\varphi(c)\Vert\right)\leq C(T,\tau),
$$
for almost any $t\in[\tau,T]$. \vskip 2mm

\underline{Part (iii).} If we now apply the chain rule to $\varphi(c)$ (again we can obtain this by a truncation argument) we obtain
\begin{align*}
\tgrad  \varphi(c)=\varphi'(c)\tgrad c, \quad\text{for a.a.}\ t\in [\tau,T].
\end{align*}
Then, for almost any $t\in[\tau,T]$, we have that
$$
\Vert \tgrad  \varphi(c)\Vert_{L^p(\Gamma(t))} \leq \Vert  \varphi^\prime(c) \Vert_{L^{2p}(\Gamma(t))}\Vert \tgrad c \Vert_{L^{2p}(\Gamma(t))}\leq C_p(T,\tau,p),
$$
by the Sobolev embedding $H^2(\Gamma(t))\hookrightarrow W^{1,q}(\Gamma(t))$ which holds for every $q\geq2$. Therefore we infer
$$
\Vert \varphi(c)\Vert_{W^{1,p}(\Gamma(t))}\leq C_p(T,\tau,p),\quad \forall\,p\geq2,
$$
so that, choosing, e.g. $p=3$,
$$
\Vert \varphi(c)\Vert_{ L^\infty(\Gamma(t))}\leq C(T,\tau),\quad \text{ for a.a. }t\in[\tau,T].
$$
Therefore, being $c(t)\in H^2(\Gamma(t))\hookrightarrow C^0(\Gamma(t))$ for almost any $t\in[0,T]$, we can find $\xi=\xi(T,\tau)>0$ such that
\begin{align*}
\Vert c\Vert_{L^\infty(\Gamma(t))}\leq 1-{\xi}, \quad\text{ for a.a. }t\in[\tau,T],
\end{align*}	
that is, the strict separation property holds. This concludes the proof.

\appendix
\section{Proofs of some technical results}
\label{app:geom}
In this Appendix, we present the proofs of some results which were only stated in the main body of the paper.
\subsection{Proof of Proposition \ref{propder}}
\label{proof1}
Even though it is a straightforward result, we give here a short proof of the formula \eqref{der}.
We consider only sufficiently smooth functions $\eta,\phi$, since the general result can be obtained by a density argument. It is enough to prove this relation locally. Let $\Omega \subset\R{\color{black}^2}$ be an open set and let $X = X(\theta, t)$, $\theta\in\Omega$, $X(\cdot, t) : \Omega\to U\cap\Gamma(t)$ be a local regular parametrization of the open  set $U \cap \Gamma(t)$ (w.r.t. the induced metric) of the surface $\Gamma(t)$ which evolves so that $X_t = \mathbf{V}(X(\theta, t), t)$. 
	The induced metric $(g_{ij}), i,j=1,2,$ is
	given by $g_{ij} = X_{\theta_i} \cdot X_{\theta_j}$ with $g = \text{det}(g_{ij} )$. Note that, as usual, $g^{ij}=(g_{ij})^{-1}$. Define then $\mathbf{f}=\mathbf{V}_a^\tau\eta$ and set
	$$
	F(\theta,t)=\mathbf{f}(X(\theta,t),t),\qquad\Phi(\theta,t)=\phi(X(\theta,t),t),\qquad  \mathcal{V}(\theta,t)=\mathbf{V}(X(\theta,t),t).
	$$
	For the sake of simplicity we will omit the dependence on $\theta,t$.
	We have
	\begin{align*}
	\frac{d}{dt}\int_{U\cap\Gamma(t)}\mathbf{f}\cdot \tgrad \phi&=\frac{d}{dt}\int_{\Omega}F_lg^{ij}\Phi_{\theta_j}X^l_{\theta_i}\sqrt{g}d\theta\\&=\int_{\Omega}F_{l,t}g^{ij}\Phi_{\theta_j}X^l_{\theta_i}\sqrt{g}d\theta+\int_{\Omega}F_lg^{ij}_t\Phi_{\theta_j}X^l_{\theta_i}\sqrt{g}d\theta\\&+\int_{\Omega}F_lg^{ij}\Phi_{\theta_j,t}X^l_{\theta_i}\sqrt{g}d\theta+\int_{\Omega}F_lg^{ij}\Phi_{\theta_j}\mathcal{V}^l_{\theta_i}\sqrt{g}d\theta\\&+\int_{\Omega}F_lg^{ij}\Phi_{\theta_j}X^l_{\theta_i}\partial_t\sqrt{g}d\theta.
	\end{align*}
	Recalling that $g^{ij}_t=-g^{ik}g^{jl}(\mathcal{V}_{\theta_k}\cdot X_{\theta_l}+X_{\theta_k}\cdot\mathcal{V}_{\theta_l})$
	and
	$$
	\partial_t \sqrt{g}=\sqrt{g}g^{ij}X_{\theta_i}\cdot\mathcal{V}_{\theta_j},
	$$
	we obtain
\begin{align*}
\int_{\Omega}F_lg^{ij}_t\Phi_{\theta_j}&X^l_{\theta_i}\sqrt{g}=-\int_{\Omega}F_l\Phi_{\theta_j}X^l_{\theta_i}g^{ik}g^{jl}\mathcal{V}_{\theta_k}^rX^r_{\theta_l}\sqrt{g} -\int_{\Omega}F_l\Phi_{\theta_j}X^l_{\theta_i}g^{ik}g^{jl}\mathcal{V}_{\theta_l}^rX^r_{\theta_k}\sqrt{g} \\
&=-\int_{\Omega}F_l(g^{ik}X^l_{\theta_i}\mathcal{V}_{\theta_k}^r)(g^{jl}X^r_{\theta_l}\Phi_{\theta_j})\sqrt{g} -\int_{\Omega}F_l(g^{ik}X^l_{\theta_i}X^r_{\theta_k})(g^{jl}\Phi_{\theta_j}\mathcal{V}_{\theta_l}^r)\sqrt{g} \\
&=-\int_{U\cap\Gamma(t)}\mathbf{f}_l\underline{D}^{\Gamma(t)}_l\mathbf{V}_r\underline{D}^{\Gamma(t)}_r\phi-\int_{U\cap\Gamma(t)}\mathbf{f}_l(\delta_{lr}-\nu_l\nu_r)\tgrad \phi\cdot \tgrad \mathbf{V}_r,
\end{align*}
exploiting $\underline{D}^{\Gamma(t)}_lx^r=\delta_{lr}-\nu_l\nu_r$ and
\begin{align}
g^{jl}\Phi_{\theta_j}\mathcal{V}_{\theta_l}^r=g^{jl}g_{jk}g^{km}\Phi_{\theta_m}\mathcal{V}_{\theta_l}^r=(g^{km}\Phi_{\theta_m}X_{\theta_k}^n)(g^{jl}\mathcal{V}_{\theta_l}^rX_{\theta_j}^n).
\label{trick}
\end{align}
Then, on account of $g^{ij}X_{\theta_i}\cdot\mathcal{V}_{\theta_j}=(\nabla_{\Gamma}\cdot \mathbf{V})(X,\cdot)$, we obtain
$$
\int_{\Omega}F_lg^{ij}\Phi_{\theta_j}X^l_{\theta_i}\partial_t\sqrt{g} =\int_{U\cap\Gamma(t)}\mathbf{f}\cdot \tgrad \phi(\tgrad \cdot \mathbf{V}).
$$
Moreover, again by \eqref{trick}, we have
\begin{align}
\int_{\Omega}F_lg^{ij}\Phi_{\theta_j}\mathcal{V}^l_{\theta_i}\sqrt{g} =\int_{U\cap\Gamma(t)}\mathbf{f}_l\tgrad \phi\cdot \tgrad \mathbf{V}_l.
\label{deriv}
\end{align}
In conclusion, we obtain
\begin{align*}
&\frac{d}{dt}\int_{U\cap\Gamma(t)}\mathbf{f}\cdot \tgrad \phi=\int_{U\cap\Gamma(t)}\partial^\bullet\mathbf{f}\cdot \tgrad \phi+\int_{U\cap\Gamma(t)}\mathbf{f}\cdot \tgrad \partial^\bullet\phi\\&+\int_{U\cap\Gamma(t)}\mathbf{f}\cdot \tgrad \phi(\tgrad \cdot \mathbf{V})-\int_{U\cap\Gamma(t)}\mathbf{f}_l\underline{D}^{\Gamma(t)}_l\mathbf{V}_r\underline{D}^{\Gamma(t)}_r\phi,
\end{align*}
where we have exploited the fact that $\mathbf{f}\cdot \nu=0$, being a tangential vector. 
Indeed, setting $\mathbf{G}_r:=\tgrad \phi\cdot \tgrad \mathbf{V}_r$, then ${\mathbf{G}}_{\tau,l}:=(\delta_{lr}-\nu_l\nu_r)\mathbf{G}_r$
is the projection of $\mathbf{G}$ on the tangent space to $\Gamma(t)$. This entails that
$
\mathbf{f}\cdot \mathbf{G}=\mathbf{f}\cdot \mathbf{G}_\tau.
$
Therefore, we deduce
$$
-\int_{U\cap\Gamma(t)}\mathbf{f}_l(\delta_{lr}-\nu_l\nu_r)\tgrad \phi\cdot \tgrad \mathbf{V}_r=-\int_{U\cap\Gamma(t)}\mathbf{f}_l\tgrad \phi\cdot \tgrad \mathbf{V}_l,
$$
and this term simplifies with \eqref{deriv}. To conclude the proof it suffices to note that
$$
\partial^\bullet \mathbf{f}=\partial^\bullet\eta \mathbf{V}_a^\tau+\eta \partial^\bullet\mathbf{V}_a^\tau.
$$
\subsection{Proof of Proposition \ref{prop2}}
	Exploiting the same notation as in the proof of Proposition \ref{der} (Section \ref{proof1}) and considering, for simplicity, the case $\int_{\Gamma(t)}\rho\left\vert\tgrad {f}\right\vert^2$, for $f$ sufficiently regular, we can obtain
	$$
	\hat{\rho}\vert\nabla F\vert^2=\hat{\rho} g^{ij}F_{\theta_j}F_{\theta_i},
	$$
	with $\hat{\rho}(\theta,t)=\rho(X(\theta,t),t)$.
	Therefore, we find
		\begin{align*}
    \frac{d}{dt}\int_{U\cap\Gamma(t)}\rho\vert\tgrad {f}\vert^2 &=\frac{d}{dt}\int_{\Omega}\hat{\rho}F_lg^{ij}\Phi_{\theta_j}X^l_{\theta_i}\sqrt{g}d\theta\\
	&=\int_{\Omega}\hat{\rho}_{,t}F_lg^{ij}\Phi_{\theta_j}X^l_{\theta_i}\sqrt{g}d\theta+\int_{\Omega}\hat{\rho}(F_lg^{ij}\Phi_{\theta_j}X^l_{\theta_i}\sqrt{g})_{,t}d\theta\\
	&=\int_{U\cap\Gamma(t)}\partial^\bullet\rho\left\vert\tgrad {f}\right\vert^2+\int_{U\cap\Gamma(t)}\rho D_i\mathbf{V}_jD_i{f}D_j{f}\\
	&+2\int_{U\cap\Gamma(t)}\rho\tgrad {f}\cdot \tgrad \partial^\bullet{f}+\int_{U\cap\Gamma(t)}\rho\tgrad \cdot\mathbf{V}\vert\tgrad {f}\vert^2,
	\end{align*}
	where we have argued as in the proof of Proposition \ref{prop:transport} (see, e.g., \cite[Sec.5.1]{DziEll13-a}). In conclusion, to obtain the last identity in \eqref{der1}, it is enough to recall that $\partial^\bullet\rho=-\rho\tgrad \cdot\mathbf{V}$. The general case follows by polarization with respect to the inner product \eqref{ind}. The proof is ended.
\section{Two basic inequalities}\label{app:prelimtools}
\subsection{Moser-Trudinger inequality}
The Moser-Trudinger inequality for manifolds is given by (see \cite{Fon93})
\begin{lem}
	Let $(\mathcal{M},r)$ be a compact $n-dimensional$ Riemannian manifold, with $n\geq2$ and $r$ as a metric. Then there exists a constant $C$ depending only on $(\mathcal{M},r)$ and $\beta_0=\beta_0(n)>0$ such that
	$$
	\sup_{\int_\mathcal{M} udV=0,\ \int_{\mathcal{M}}\vert\nabla_{\mathcal{M}} u\vert^ndV\leq 1}\int_{\mathcal{M}}\text{e}^{\beta_0\vert u\vert^p}dV\leq C,
	$$
	where $p=\frac{n}{n-1}$.
	\label{trud}
\end{lem}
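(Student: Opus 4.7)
Since the claim is Fontana's generalisation of Moser--Trudinger to a compact Riemannian manifold, I would follow the classical potential-theoretic strategy: reduce to a sharp integrability estimate for a Riesz-type convolution and invoke the O'Neil--Adams lemma for such operators. Throughout, $G$ will denote the Green's function for $-\Delta_{\mathcal{M}}$ with the mean-zero normalisation, whose existence and regularity are standard on a compact Riemannian manifold.

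\textbf{Step 1: Green-function representation.} For a smooth $u$ with $\int_{\mathcal{M}} u\, dV = 0$, one has the representation
\[
u(x) = \int_{\mathcal{M}} G(x,y) (-\Delta_{\mathcal{M}} u)(y)\, dV(y) = \int_{\mathcal{M}} \nabla_y G(x,y) \cdot \nabla_{\mathcal{M}} u(y)\, dV(y),
\]
after an integration by parts justified by compactness of $\mathcal{M}$ and smoothness. A standard parametrix-plus-iteration construction for $G$, together with the heat-kernel upper bound on $\mathcal{M}$, gives
\[
|\nabla_y G(x,y)| \;\leq\; \frac{C(\mathcal{M},r)}{d(x,y)^{n-1}}, \qquad x\ne y,
\]
so that $u$ is pointwise dominated by the convolution of $|\nabla_{\mathcal{M}} u|$ with a kernel of Riesz type of order $1$.

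\textbf{Step 2: Sharp exponential integrability for the potential operator.} Define $Tf(x) := \int_{\mathcal{M}} K(x,y) f(y)\, dV(y)$ with $|K(x,y)|\leq C d(x,y)^{-(n-1)}$. The key analytic input is the O'Neil--Adams rearrangement lemma: if we pass to the nonincreasing rearrangement $f^*$ on $(0,|\mathcal{M}|)$, then
\[
(Tf)^*(t) \;\leq\; C\!\left(\frac{1}{t}\int_0^t f^*(s)\,ds \;+\; \int_t^{|\mathcal{M}|} s^{-1/n'} f^*(s)\, s^{-1/n}\, ds\right),
\]
where $n'=n/(n-1)$. Taking $f = |\nabla_{\mathcal{M}} u|$ with $\|f\|_{L^n(\mathcal{M})}\leq 1$ and applying Adams' one-dimensional exponential lemma (based on a clever change of variable $s = e^{-\tau}$ reducing the problem to a sharp exponential estimate on $\mathbb{R}$) gives
\[
\int_{\mathcal{M}} e^{\beta_0 |Tf|^{n'}}\, dV \;\leq\; C(\mathcal{M},r),
\]
for some explicit $\beta_0 = \beta_0(n)>0$. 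Since $|u|\leq C\, T(|\nabla_{\mathcal{M}} u|)$ pointwise, the conclusion follows (after adjusting $\beta_0$ by an absolute constant).

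\textbf{Main obstacle.} The hard part is not the one-dimensional rearrangement inequality, which is purely analytic, but the uniform Riesz-type bound for $|\nabla_y G|$: one must construct $G$ on $\mathcal{M}$ and control its singular behaviour uniformly in $x$ by the Euclidean one even though the metric and volume element deviate from flat ones away from the diagonal. This is handled by a parametrix argument: choose an injectivity-radius neighbourhood around each point, build the leading Euclidean singularity in normal coordinates, and absorb the error by a Neumann-series iteration using compactness. One then has to verify that the constants in Adams' lemma depend only on $n$ and on $(\mathcal{M},r)$ through the Green-function bound, not on $u$; the density of smooth mean-zero functions in the admissible class, together with Fatou's lemma, extends the estimate from smooth $u$ to the full supremum class.
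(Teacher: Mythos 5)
The paper does not actually prove this lemma: it is quoted as a known result from Fontana's work (\cite{Fon93}), so there is no internal proof to compare against. Your sketch reproduces precisely the strategy of that reference — the Green's-function representation of a mean-zero function, the Riesz-type bound $|\nabla_y G(x,y)|\leq C\,d(x,y)^{-(n-1)}$ obtained by a parametrix construction, O'Neil's rearrangement inequality, and Adams' one-dimensional exponential lemma — so the approach is the intended one and the identification of the parametrix step as the main manifold-specific obstacle is accurate. The only substantive slip is that the displayed rearrangement inequality has the wrong exponents: O'Neil's lemma for a kernel with $K^*(s)\simeq s^{-1/n'}$ gives $(Tf)^{**}(t)\leq C\left(t^{1/n-1}\int_0^t f^*(s)\,ds+\int_t^{|\mathcal{M}|}f^*(s)\,s^{-1/n'}\,ds\right)$, not the form you wrote (whose second term integrates $f^*(s)s^{-1}$); this must be corrected before Adams' one-dimensional lemma yields the sharp exponential bound, but it is a transcription issue rather than a gap in the method.
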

\begin{remark}
Notice that the constant $C$ depends not only on the volume of $\mathcal{M}$, $Vol(\mathcal{M})$, but also on its metric $r$. Therefore, in the case of $\mathcal{M}=\Gamma(t)$ it does not seem easy to find a constant $C$ independent of time.
\end{remark}
Adapting the proof proposed in \cite{NagSenYos97}, we can easily obtain the following
\begin{lem}
	Let $(\mathcal{M},r)$ be a compact $n-dimensional$ Riemannian manifold with metric $r$ and $n\geq2$. Let $u\in W^{1,n}(\mathcal{M})$. Then 
	$$
	\int_{\mathcal{M}}\text{e}^{\vert u\vert}dV\leq C_1\text{e}^{C_2\Vert u\Vert_{W^{1,n}(\mathcal{M})}^n},
	$$
	where the constant $C_1>0$ does not depend on $u$, but depends on $n$ and on $(\mathcal{M},r)$, whereas $C_2>0$ depends only on $n$.
	\label{Trudi}
\end{lem}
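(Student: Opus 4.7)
The plan is to decompose $u$ into its mean value and mean-zero part, apply the Moser--Trudinger inequality of Lemma \ref{trud} to the latter after a suitable normalisation, and then use Young's inequality to convert the resulting $|\tilde v|^p$ exponent into a linear one. First, set $\bar u := \frac{1}{\vert\mathcal{M}\vert}\int_\mathcal{M} u\, dV$ and $v := u - \bar u$, so that $\int_\mathcal{M} v\, dV = 0$ and
\begin{equation*}
\int_\mathcal{M} e^{\vert u\vert}\, dV \leq e^{\vert \bar u\vert} \int_\mathcal{M} e^{\vert v\vert}\, dV.
\end{equation*}
Hölder's inequality gives $\vert \bar u\vert \leq \vert\mathcal{M}\vert^{-1/n}\Vert u\Vert_{L^n(\mathcal{M})}$, and a further application of Young's inequality with conjugate exponents $n/(n-1)$ and $n$ yields
\begin{equation*}
\vert \bar u\vert \leq \tilde C(\mathcal{M},n) + \tfrac{1}{n}\Vert u\Vert_{L^n(\mathcal{M})}^n,
\end{equation*}
so the coefficient multiplying $\Vert u\Vert_{L^n}^n$ depends only on $n$.

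For the mean-zero part, I assume $\Vert\nabla_{\mathcal{M}} v\Vert_{L^n(\mathcal{M})} > 0$ (otherwise $v\equiv 0$ and the conclusion is trivial) and set $\tilde v := v/\Vert\nabla_{\mathcal{M}} v\Vert_{L^n(\mathcal{M})}$. Then $\int_\mathcal{M} \tilde v\, dV = 0$ and $\int_\mathcal{M} \vert\nabla_{\mathcal{M}} \tilde v\vert^n\, dV = 1$, so Lemma \ref{trud} applies to $\tilde v$. Writing $\vert v\vert = \vert\tilde v\vert \cdot \Vert\nabla_{\mathcal{M}} v\Vert_{L^n(\mathcal{M})}$ and applying Young's inequality with exponents $p = n/(n-1)$ and $n$, carefully calibrated so that the coefficient of $\vert\tilde v\vert^p$ is exactly $\beta_0$, gives
\begin{equation*}
\vert v\vert \leq \beta_0 \vert\tilde v\vert^p + \frac{1}{n(p\beta_0)^{n/p}}\Vert\nabla_{\mathcal{M}} v\Vert_{L^n(\mathcal{M})}^n.
\end{equation*}
Exponentiating, integrating over $\mathcal{M}$ and invoking Lemma \ref{trud} for $\tilde v$ yields
\begin{equation*}
\int_\mathcal{M} e^{\vert v\vert}\, dV \leq e^{C(n)\Vert\nabla_{\mathcal{M}} v\Vert_{L^n(\mathcal{M})}^n} \int_\mathcal{M} e^{\beta_0 \vert\tilde v\vert^p}\, dV \leq \widehat C(\mathcal{M},n)\, e^{C(n)\Vert\nabla_{\mathcal{M}} u\Vert_{L^n(\mathcal{M})}^n}.
\end{equation*}

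Combining the two estimates and absorbing the $\mathcal{M}$-dependent additive constants from the bound on $\vert \bar u\vert$ into the prefactor, then using $\Vert u\Vert_{L^n}^n, \Vert\nabla_{\mathcal{M}} u\Vert_{L^n}^n \leq \Vert u\Vert_{W^{1,n}(\mathcal{M})}^n$, gives the claim with $C_2$ depending only on $n$ and $C_1 = C_1(\mathcal{M},n)$. The only technical point is the precise tuning of Young's inequality so that the coefficient of $\vert\tilde v\vert^p$ matches the universal constant $\beta_0$ of Lemma \ref{trud}; no substantial obstacle is anticipated beyond this bookkeeping.
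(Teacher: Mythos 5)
Your proof is correct and follows essentially the same route as the paper's: normalise the mean-zero part by $\Vert\nabla_{\mathcal{M}} v\Vert_{L^n}$, apply Lemma \ref{trud}, and use Young's inequality with exponents $p=n/(n-1)$ and $n$ calibrated so that the coefficient of $\vert\tilde v\vert^p$ equals $\beta_0$, treating the mean value separately via the splitting $e^{\vert u\vert}\leq e^{\vert\bar u\vert}e^{\vert v\vert}$. Your handling of $\vert\bar u\vert$ is merely a more explicit version of the bookkeeping the paper leaves implicit in its final line.
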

\begin{proof}
	Let us first consider $u\in W^{1,n}(\mathcal{M})$ with $\int_{\mathcal{M}} udV=0$. We then define $v=\frac{u}{\Vert \nabla u\Vert_{n}}$. Clearly $\Vert \nabla v\Vert_n=1$, therefore by Lemma \ref{trud} we get, for $p=\frac{n}{n-1}$,
	\begin{align}
	\int_{\mathcal{M}}\text{e}^{\beta_0\vert v\vert^p}dV\leq C.
	\label{p}
	\end{align}
	Now, recalling \eqref{p}, by Young's inequality we obtain
	\begin{align*}
	\int_{\mathcal{M}} e^{\vert u\vert}dV&=\int_{\mathcal{M}} e^{(p\beta_0)^{1/p}\vert v\vert(p\beta_0)^{-1/p}\Vert \nabla u\Vert_{n}}dV\leq \int_{\mathcal{M}} e^{\beta_0\vert v\vert^p+\frac{1}{n}(p\beta_0)^{-n/p}\Vert \nabla u\Vert_{n}^n}dV\leq C  e^{\frac{1}{\beta_n}\Vert \nabla u\Vert_{n}^n},
	\end{align*}
	having set $\beta_n=\left(\frac{1}{n}(p\beta_0)^{-n/p}\right)^{-1}=n\left(\frac{n\beta_0}{n-1}\right)^{(n-1)}$.
	To conclude the proof, let us fix $u\in W^{1,n}(\mathcal{M})$. Then, setting $w=u-(u)_{\mathcal{M}}$, so that $(w)_{\mathcal{M}}=0$, we can apply the result we have just proved, to infer
	$$
	\int_{\mathcal{M}} e^{\vert u\vert}dV\leq e^{\vert (u)_{\mathcal{M}}\vert}\int_{\mathcal{M}}e^{\vert w\vert}dV\leq C e^{\vert (u)_{\mathcal{M}}\vert+\frac{1}{\beta_n}\Vert \nabla u\Vert_{n}^n}\leq C_1e^{C_2\Vert  u\Vert_{W^{1,n}(\mathcal{M})}^n},
	$$
	concluding the proof.
\end{proof}

\subsection{Generalized Young's inequality}
We will use the following version of Young's inequality (see \cite[Appendix A]{ConGio20} and \cite{GioGraMir17}).
\begin{lem}
Let $L>0$ and $\rho_\star>0$ be given. Then, there exists $N=N(L,\rho_\star)>0$ such that
\begin{align*}
xye^{Ly}\leq e^{Nx}+\frac{\rho_\star}{2}y^2e^{Ly},\quad \forall x,y\geq 0.
\end{align*}
\label{gener}
	\end{lem}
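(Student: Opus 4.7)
The plan is to split into two regimes based on the ratio of $y$ to $x$, letting the two summands on the right-hand side handle different scenarios. The threshold $y = \tfrac{2x}{\rho_\star}$ is essentially forced on us by demanding $xy \le \tfrac{\rho_\star}{2}y^2$.

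First, if $y \geq \tfrac{2x}{\rho_\star}$, then $x \leq \tfrac{\rho_\star}{2} y$, so $xy \leq \tfrac{\rho_\star}{2} y^2$; multiplying by $e^{Ly} \geq 0$ immediately gives $xy e^{Ly} \leq \tfrac{\rho_\star}{2} y^2 e^{Ly}$, and the $e^{Nx}$ term is not even used in this regime.

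In the complementary regime $y < \tfrac{2x}{\rho_\star}$, the exponential factor $e^{Ly}$ is controlled by $e^{2Lx/\rho_\star}$ and the product $xy$ is bounded by $\tfrac{2x^2}{\rho_\star}$, yielding
\[
xy e^{Ly} \leq \frac{2x^2}{\rho_\star}\, e^{2Lx/\rho_\star}.
\]
It then remains to choose $N = N(L,\rho_\star)$ large enough so that $\tfrac{2x^2}{\rho_\star}\, e^{2Lx/\rho_\star} \leq e^{Nx}$ for every $x \geq 0$. Writing $\alpha := N - \tfrac{2L}{\rho_\star}$, this reduces to the one-variable inequality $\tfrac{2x^2}{\rho_\star} \leq e^{\alpha x}$, which is trivially true at $x=0$ and holds for all $x>0$ as soon as $\alpha > 0$ is sufficiently large, since $e^{\alpha x}$ dominates any polynomial (the maximum of $2\ln x - \alpha x$ over $(0,\infty)$ is $2\ln(2/\alpha) - 2$, so any $\alpha$ satisfying $2\ln(2/\alpha) - 2 + \ln(2/\rho_\star) \leq 0$ suffices).

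No real obstacle arises here: the only care needed is bookkeeping in the second case, where $N$ must first exceed $2L/\rho_\star$ and then be pushed further to absorb the quadratic prefactor. Both steps are elementary once the dichotomy is set up, and pasting the two cases together gives the claimed inequality on the whole quadrant $\{x,y \geq 0\}$.
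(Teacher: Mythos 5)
Your proof is correct, and it takes a genuinely different route from the paper's. The paper invokes the generalized Young's inequality for the conjugate pair $\Phi(s)=e^s-s-1$, $\Psi(s)=(1+s)\ln(1+s)-s$, applies it with $a=Nx$ and $b=N^{-1}ye^{Ly}$, and then uses $\ln(1+s)\leq s$ repeatedly to land on $xye^{Ly}\leq e^{Nx}+N^{-1}(L+N^{-1})y^2e^{Ly}$, after which $N$ is chosen so that $N^{-1}(L+N^{-1})\leq \rho_\star/2$. You instead split the quadrant along $y=2x/\rho_\star$: on one side $xy\leq \tfrac{\rho_\star}{2}y^2$ directly, and on the other side everything is absorbed into $e^{Nx}$ via the elementary fact that $\tfrac{2}{\rho_\star}x^2 e^{2Lx/\rho_\star}\leq e^{Nx}$ once $N-\tfrac{2L}{\rho_\star}$ is large enough (your optimization of $2\ln x-\alpha x$ is correct and yields the explicit threshold $\alpha\geq 2\sqrt{2}/(e\sqrt{\rho_\star})$). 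Both arguments are complete and produce an explicit admissible $N=N(L,\rho_\star)$; the paper's has the advantage of being a one-line application of a standard convex-duality inequality (and matches the form used in the cited references), while yours is more self-contained and elementary, requiring nothing beyond single-variable calculus. Either serves the application in the separation argument equally well, since only the existence of some $N(L,\rho_\star)$ is used.
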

\begin{proof}
We recall the generalized Young's inequality (see, e.g., \cite[Sec.8.2]{AdaFou03}): for any $a,b\geq0$,
$$
ab\leq \Phi(a)+\Psi(b),
$$
with, given $s\geq 0$,
$$
\Phi(s)=e^s-s-1,\qquad \Psi(s)=(1+s)\text{ln}(1+s)-s.
$$
Then, we choose $a=Nx$ and $b=N^{-1}ye^{Ly}$. We obtain, recalling that $\text{ln}(1+s)\leq s$ for any $s\geq0$,
\begin{align*}
xye^{Ly} &\leq e^{Nx}-Nx-1+(1+N^{-1}ye^{Ly})\text{ln}(1+N^{-1}ye^{Ly})-N^{-1}ye^{Ly}\\&=e^{Nx}-Nx-1+\text{ln}(1+N^{-1}ye^{Ly})-N^{-1}ye^{Ly}+N^{-1}ye^{Ly}\text{ln}(1+N^{-1}ye^{Ly})\\&\leq e^{Nx}+N^{-1}ye^{Ly}[\text{ln}(e^{Ly})+\text{ln}(e^{-Ly}+N^{-1}y)]\\&\leq e^{Nx}+N^{-1}ye^{Ly}[Ly+\text{ln}(1+N^{-1}y)]\\&\leq e^{Nx}+N^{-1}(L+N^{-1})y^2e^{Ly},
\end{align*}
and if we choose $N=N(L,\rho_\star)>\dfrac{L+\sqrt{L^2+2\rho_\star}}{\rho_\star}$ we finally obtain
$$
xye^{Ly}\leq e^{Nx}+\frac{\rho_\star}{2}y^2e^{Ly},
$$
concluding the proof.
	\end{proof}
\section{The embedding $\mathbb W^{\infty, 2}(H^2, H^1) \hookrightarrow C^0_{H^{3/2}}$}\label{app:embedding}
In this Appendix we aim to sketch a proof of the following result, which we make use of to obtain the extra regularity in Remark \ref{sepp}.

\begin{lem}
Assuming the following extra regularity for $\mathbf V$ and $\Phi$ (with respect to Assumption \textbf{A}$_{\Phi}$)
\begin{align*}
\mathbf V \in C^0( [0,T]; C^3(\R^{n+1}, \R^{n+1}) ) \quad \text{ and } \quad \Phi_0^{(\cdot)} \in C^1 ( [0,T]; C^3(\R^{n+1}, \R^{n+1}) ):
\end{align*}
\begin{itemize}
\item[(i)] The families $(H^2(\Gamma(t)), \phi_t)_{t}$, $(H^1(\Gamma(t)), \phi_t)_{t}$ and $(H^{3/2}(\Gamma(t)), \phi_t)_{t}$ are compatible.
\item[(ii)] The spaces $\W^{\infty,2}(H^2, H^1)$ and $\mathcal W^{\infty,2}(H^2(\Gamma_0), H^1(\Gamma_0))$ satisfy the evolving space equivalence.
\item[(iii)] We have the embedding $\W^{\infty,2}(H^2, H^1) \hookrightarrow C^0_{H^{3/2}}$. 
\end{itemize}
\label{C1}
\end{lem}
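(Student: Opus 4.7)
The plan is to reduce the whole statement to a classical Lions--Magenes type interpolation-embedding result on the fixed reference surface $\Gamma_0$. Under the strengthened hypothesis $\Phi_0^{(\cdot)}\in C^1([0,T];C^3(\R^{n+1};\R^{n+1}))$ and $\mathbf V\in C^0([0,T];C^3(\R^{n+1};\R^{n+1}))$, the surfaces $\Gamma(t)$ are $C^3$, so all the standard Sobolev machinery (including interpolation) is available on each fibre, and the pushforward $\phi_t$ and pullback $\phi_{-t}$ preserve enough regularity to move us back and forth between $\Gamma(t)$ and $\Gamma_0$.

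For part (i), I would show compatibility of $(\phi_t,H^k(\Gamma(t)))_t$ for $k=1,2$ by a direct computation, exactly in the spirit of \cite[Sec.\ 7]{CaeEll21}: using \eqref{pullback} and the $C^3$-bounds on $\Phi_0^{(\cdot)}$ and $\Phi_{(\cdot)}^0$, the chain rule gives, for each multi-index $|\alpha|\leq 2$, a uniform pointwise bound of $|\tgrad^{\alpha}(\phi_t v)|$ in terms of $\sum_{|\beta|\leq|\alpha|}|\nabla_{\Gamma_0}^{\beta}v|$ (and vice-versa), which when integrated against $J_t^0$ (bounded above and below by \eqref{equiv}) yields equivalence of the $H^k(\Gamma(t))$- and $H^k(\Gamma_0)$-norms uniformly in $t$. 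For the fractional space $H^{3/2}$ I would use the functoriality of complex interpolation: since $H^{3/2}(\Gamma(t))=[H^2(\Gamma(t)),H^1(\Gamma(t))]_{1/2}$ (valid because $\Gamma(t)$ is a closed $C^3$-manifold), and since $\phi_t,\phi_{-t}$ are bounded linear operators between the endpoint spaces with bounds uniform in $t$, they interpolate to bounded operators between the $[\cdot,\cdot]_{1/2}$ spaces, again uniformly in $t$. This gives compatibility of $(\phi_t,H^{3/2}(\Gamma(t)))_t$.

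For part (ii), the evolving space equivalence between $\W^{\infty,2}(H^2,H^1)$ and $\mathcal W^{\infty,2}(H^2(\Gamma_0),H^1(\Gamma_0))$ follows from the general theory of \cite{AlpCaeDjuEll21,AlpEllSti15a}: for $u\in \W^{\infty,2}(H^2,H^1)$, the map $t\mapsto \phi_{-t}u(t)$ is essentially bounded in $H^2(\Gamma_0)$ by (i), while the identity $\partial^\bullet u = \phi_t \tfrac{d}{dt}(\phi_{-t}u)$ together with compatibility of $(\phi_t,H^1(\Gamma(t)))_t$ shows the weak time derivative of $\phi_{-t}u$ is $\phi_{-t}\partial^\bullet u\in L^2(0,T;H^1(\Gamma_0))$; the converse direction is symmetric. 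I would just verify this commutation on dense test functions as in \cite[Thm.\ 2.33]{AlpCaeDjuEll21} and pass to the limit using (i).

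For part (iii), by (i) and (ii) it suffices to prove the classical embedding $\mathcal W^{\infty,2}(H^2(\Gamma_0),H^1(\Gamma_0))\hookrightarrow C^0([0,T];H^{3/2}(\Gamma_0))$. For $\tilde u$ in the left-hand space, the fundamental theorem of calculus gives, for $0\le t_1<t_2\le T$,
\begin{equation*}
\|\tilde u(t_2)-\tilde u(t_1)\|_{H^1(\Gamma_0)}\le \int_{t_1}^{t_2}\|\tilde u_t(s)\|_{H^1(\Gamma_0)}\,ds\le |t_2-t_1|^{1/2}\|\tilde u_t\|_{L^2(0,T;H^1(\Gamma_0))},
\end{equation*}
so $\tilde u$ admits a representative in $C^{0,1/2}([0,T];H^1(\Gamma_0))$; combined with $\|\tilde u\|_{L^\infty(0,T;H^2(\Gamma_0))}<\infty$, the interpolation inequality $\|v\|_{H^{3/2}(\Gamma_0)}\le C\|v\|_{H^2(\Gamma_0)}^{1/2}\|v\|_{H^1(\Gamma_0)}^{1/2}$ yields
\begin{equation*}
\|\tilde u(t_2)-\tilde u(t_1)\|_{H^{3/2}(\Gamma_0)}\le C\,\|\tilde u\|_{L^\infty(H^2)}^{1/2}\,|t_2-t_1|^{1/4},
\end{equation*}
i.e. $\tilde u\in C^{0,1/4}([0,T];H^{3/2}(\Gamma_0))\hookrightarrow C^0([0,T];H^{3/2}(\Gamma_0))$. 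The main subtlety here---and the only step requiring care---is that the $L^\infty$ bound holds only a.e. in $t$; the continuous $H^1$-representative must be shown to coincide with $\tilde u$ a.e. and to remain bounded in $H^2$ pointwise, which is standard (use that the closed unit ball of $H^2$ is weakly sequentially closed in $H^1$, so the weak $H^1$-limits along approximating sequences of good times stay in a ball of $H^2$). Combining the three parts gives the stated embedding.
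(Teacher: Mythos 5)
Your proposal reaches the same conclusion but by a genuinely different route in part (i), and it actually supplies more detail than the paper in part (iii). The paper handles the third pair $(H^{3/2}(\Gamma(t)),\phi_t)_t$ by working directly with the Gagliardo seminorm $[\cdot]_{3/2}$ (the definition adopted in the paper): the boundedness of $\phi_t$, $\phi_{-t}$ is obtained by estimating the three terms of the double integral using the pullback formula for tangential gradients, the uniform bi-Lipschitz bound $\vert\Phi_t^0(p)-\Phi_t^0(s)\vert\geq C_L\vert p-s\vert$ and the two-sided bounds on $J_t^0$; measurability of $t\mapsto\Vert\phi_t u\Vert_{H^{3/2}(\Gamma(t))}$ is then proved by dominated convergence, term by term. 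You replace this with functoriality of complex interpolation, which is slicker, and your part (iii) gives a correct, complete proof (H\"older-$1/2$ continuity in $H^1$ via the fundamental theorem of calculus, then the interpolation inequality to upgrade to H\"older-$1/4$ continuity in $H^{3/2}$, with the a.e.-representative issue correctly handled by weak closedness of $H^2$-balls in $H^1$) of the classical embedding that the paper merely cites; parts (ii) coincide.

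Two points in your part (i) are left unaddressed and should be filled in. First, the paper defines $H^{3/2}(\Gamma(t))$ intrinsically via the Gagliardo seminorm, so your interpolation argument proves compatibility for the family of norms $\Vert\cdot\Vert_{[H^2(\Gamma(t)),H^1(\Gamma(t))]_{1/2}}$; to conclude for the norms actually used you must check that the identification $[H^2(\Gamma(t)),H^1(\Gamma(t))]_{1/2}=H^{3/2}(\Gamma(t))$ holds with equivalence constants \emph{uniform in} $t$ (plausible, via a fixed atlas on $\Gamma_0$ transported by the uniformly $C^3$-bounded diffeomorphisms, but it is precisely the kind of geometric uniformity the paper's direct estimate makes explicit). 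Second, the definition of a compatible family in the framework of \cite{AlpCaeDjuEll21} also requires the measurability of $t\mapsto\Vert\phi_t u\Vert_{H^{3/2}(\Gamma(t))}$ for each fixed $u$; this does not follow automatically from interpolation (the complex interpolation norm is an infimum over analytic families and its measurability in $t$ is not obvious), whereas with the Gagliardo norm it follows from continuity of the integrands in $t$ together with a uniform integrable dominant. Neither issue invalidates your strategy, but both must be supplied before part (i) is complete.
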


Before showing the proof we briefly recall some definitions. We denote
\begin{align*}
H^{3/2}(\Gamma(t)) = \left\{ u\in H^1(\Gamma(t)) \colon \gint\gint \dfrac{\left(\underline{D}^{\Gamma(t)}_i u(x) - \underline{D}^{\Gamma(t)}_i u(y)\right)^2 }{|x-y|^{n+1}} < \infty, \,\,  \forall i \right\}
\end{align*}
with the norm
\begin{align*}
&\|u\|_{H^{3/2}}^2 = \|u\|_{L^2}^2 + \|\tgrad u\|_{L^2}^2 + \sum_{i=1}^{n+1} \gint\gint \dfrac{\left(\underline{D}_i u(x) - \underline{D}_i u(y)\right)^2 }{|x-y|^{n+1}} \\
&= \|u\|_{L^2}^2 + \|\tgrad u\|_{L^2}^2 + \gint\gint \dfrac{|\tgrad u(x)|^2 - 2 \tgrad u(x)\cdot\tgrad u(y) + |\tgrad u(y)|^2}{|x-y|^{n+1}}.
\end{align*}
Denote 
\begin{align*}
[u]_{3/2} := \gint\gint \dfrac{|\tgrad u(x)|^2 - 2 \tgrad u(x)\cdot\tgrad u(y) + |\tgrad u(y)|^2}{|x-y|^{n+1}}.
\end{align*}
\begin{proof}

\underline{(i).} Compatibility of the first two pairs is established in \cite[Lemmas 7.2, 7.5]{AlpCaeDjuEll21}. For the third pair, we need to show:
\begin{itemize}
\item[a)] $\phi_t\colon H^{3/2}(\Gamma_0) \to H^{3/2}(\Gamma(t))$ and its inverse $\phi_{-t}\colon H^{3/2}(\Gamma(t)) \to H^{3/2}(\Gamma_0)$ are linear maps satisfying $\phi_0 = \text{Id}$ and are also bounded: there exists a constant $C_X>0$ s.t.
\begin{align*}
\begin{aligned}
\norm{\phi_t u}_{H^{3/2}} &\leq C_X\norm{u}_{H^{3/2}}, \quad \norm{\phi_{-t} u}_{H^{3/2}} &\leq C_X\norm{u}_{H^{3/2}}.
\end{aligned}
\end{align*}
\begin{small}
\begin{itemize}
\item[\underline{Proof.}] Linearity and the initial condition $\phi_0 = \text{Id}$ are immediate. Let $u\in H^{3/2}(\Gamma_0)$, then since $u\in H^1(\Gamma_0)$ we have from \cite[Lemma 7.2]{AlpCaeDjuEll21} that $\phi_t u \in H^1(\Gamma(t))$ as well as the bounds
\begin{align*}
\|\phi_t u\|_{L^2}^2 \leq C \|u\|_{L^2}^2 \quad \text{ and } \quad \|\phi_t u\|_{H^1}^2 \leq C \|u\|_{H^1}^2
\end{align*} 
where the constant $C$ depends only on an upper bound for the $C^1([0,T]\times\Gamma_0)$-norm of $\Phi^{(\cdot)}_0$. Now we aim to estimate 
\begin{align*}
[\phi_t u]_{3/2} = \gint\gint \dfrac{|\tgrad \phi_t u(x)|^2 - 2 \tgrad \phi_t u(x)\cdot\tgrad \phi_t u(y) + |\tgrad \phi_t u(y)|^2}{|x-y|^{n+1}}.
\end{align*}
We treat each term separately. Below $M>0$ is a general constant depending only on an upper bound on the $C^1([0,T]\times\Gamma_0)$-norm of $\Phi_0^{(\cdot)}$. We have by \eqref{pullback} 
\begin{align*}
\gint\gint \dfrac{|\tgrad \phi_t u(x)|^2}{|x-y|^{n+1}} &= \int_{\Gamma_0}\int_{\Gamma_0} \dfrac{|(\mathbf{D}_{\Gamma}\Phi_t^0)(p) (\mathbf A_t^0)^{-1}(p)\tzerograd u(p)|^2}{|\Phi_t^0(p) - \Phi_t^0(s)|^{n+1}} J^0_t (p)  J^0_t (s) \\
&\leq M \int_{\Gamma_0}\int_{\Gamma_0} \dfrac{|\tzerograd u (p)|^2}{|p-s|^{n+1}},
\end{align*}
in which we applied the bi-Lipschitz property of $\Phi_t^0$, which is ensured by its regularity, so that 
\begin{align}
\vert \Phi_t^0(p) - \Phi_t^0(s)\vert \geq C_L\vert p-s\vert,
\label{lip}
\end{align}
with $C_L>0$ a constant independent of time.
Similarly 
\begin{align*}
\gint\gint \dfrac{|\tgrad \phi_t u(y)|^2}{|x-y|^{n+1}} 
&\leq M \int_{\Gamma_0}\int_{\Gamma_0} \dfrac{|\tzerograd u (s)|^2}{|p-s|^{n+1}},
\end{align*}
and for the remaining term 
\begin{align*}
&\hskip -7mm \gint \gint \dfrac{\tgrad \phi_t u(x)\cdot\tgrad \phi_t u(y)}{|x-y|^{n+1}} \\
& \hskip -7mm = \int_{\Gamma_0}\int_{\Gamma_0} \dfrac{(\mathbf{D}_{\Gamma}\Phi_t^0)(p) (\mathbf A_t^0)^{-1}(p)\tzerograd u(p)\cdot (\mathbf{D}_{\Gamma}\Phi_t^0)(s) (\mathbf A_t^0)^{-1}(s)\tzerograd u(s)}{|\Phi_t^0(p) - \Phi_t^0(s)|^{n+1}} J^0_t (p) J^0_t (s) \\
&\hskip -7mm\leq M \int_{\Gamma_0}\int_{\Gamma_0} \dfrac{|\tzerograd u(p)| \, |\tzerograd u(s)|} {|p-s|^{\frac{n+1}{2}} \, |p-s|^{\frac{n+1}{2}}}   \\
&\hskip -7mm\leq \dfrac{M}{2} \int_{\Gamma_0}\int_{\Gamma_0} \dfrac{|\tzerograd u (p)|^2}{|p-s|^{n+1}} \, \d\Gamma(p) \, \d\Gamma(s) + \dfrac{M}{2} \int_{\Gamma_0}\int_{\Gamma_0} \dfrac{|\tzerograd u (s)|^2}{|p-s|^{n+1}} 
\end{align*}
Combining the above leads to $[\phi_t u]_{3/2} \leq M [u]_{3/2}$, proving that $\phi_t$ indeed maps $H^{3/2}(\Gamma_0)$ into $H^{3/2}(\Gamma(t))$ and is bounded. The calculations for $\phi_{-t}$ are analogous, since all the properties exploited are shared by $\Phi_0^t$ as well.
\end{itemize}
\end{small}

\item[b)] for all $u \in H^{3/2}(\Gamma_0)$, the map $t \mapsto \norm{\phi_t u}_{H^{3/2}}$ is measurable.
\begin{small}
\begin{itemize}
\item[\underline{Proof.}] It remains to show that $t\mapsto [\phi_t u]_{H^{3/2}}$ is measurable. From the calculations above it follows that the seminorm $[\phi_t u]_{H^{3/2}}$ is the sum of three components:
\begin{itemize}
\item[$\bullet$] The first term is the integral of 
\begin{align*}
g_1(t, p, s) := \dfrac{|(\mathbf{D}_{\Gamma}\Phi_t^0)(p) (\mathbf A_t^0)^{-1}(p)\tzerograd u(p)|^2}{|\Phi_t^0(p) - \Phi_t^0(s)|^{n+1}} J^0_t (p)  J^0_t (s)
\end{align*}
which is a continuous function of $t\in [0,T]$ for almost every $(p,s)\in \Gamma_0\times \Gamma_0$ and can be (uniformly in time) dominated, thanks to \eqref{lip}, as 
\begin{align*}
|g_1(t,p,s)| \leq M \dfrac{|\tzerograd u(p)|^2}{|p-s|^{n+1}} \in L^1(\Gamma_0\times \Gamma_0).
\end{align*}
Therefore, by Lebesgue's dominated convergence Theorem, $$t\mapsto \int_{\Gamma_0}\int_{\Gamma_0} g_1(t,p,s)$$ is continuous and thus measurable.
\item[$\bullet$] Similarly for the last term, the function
\begin{align*}
g_3(t, p, s) := \dfrac{|(\mathbf{D}_{\Gamma}\Phi_t^0)(p) (\mathbf A_t^0)^{-1}(p)\tzerograd u(s)|^2}{|\Phi_t^0(p) - \Phi_t^0(s)|^{n+1}} J^0_t (p)  J^0_t (s)
\end{align*}
is continuous for almost every $(p,s)\in \Gamma_0\times \Gamma_0$ and can be (uniformly in time) dominated, thanks again to \eqref{lip}, as 
\begin{align*}
|g_3(t,p,s)| \leq M \dfrac{|\tzerograd u(s)|^2}{|p-s|^{n+1}} \in L^1(\Gamma_0\times \Gamma_0),
\end{align*}
giving that $$t\mapsto \int_{\Gamma_0}\int_{\Gamma_0} g_3(t,p,s)$$ is continuous and then measurable.
\item[$\bullet$] Finally, the integrand in the middle term, say $g_2(t,p,s)$,
is also a continuous function of $t\in [0,T]$ for almost every $(p,s)\in \Gamma_0\times \Gamma_0$ and can now be (uniformly in time) dominated, again by \eqref{lip}, as 
\begin{align*}
|g_2(t,p,s)| \leq \dfrac{M}{2} \left(\dfrac{|\tzerograd u (p)|^2}{|p-s|^{n+1}}  +\dfrac{|\tzerograd u (s)|^2}{|p-s|^{n+1}} \right) \in L^1(\Gamma_0\times \Gamma_0),
\end{align*}
therefore $$t\mapsto \int_{\Gamma_0}\int_{\Gamma_0} g_2(t,p,s)$$ is  continuous and hence measurable.
\end{itemize}
\end{itemize}
\end{small}
This proves that also $(H^{3/2}(\Gamma(t)), \phi_t)_t$ is compatible.
\end{itemize}

\vskip 2mm

\underline{(ii).} The conditions verified in \cite[Proposition 7.7]{AlpCaeDjuEll21} show that the improved result in \cite[Theorem 5.10]{AlpCaeDjuEll21} holds true, giving the evolving space equivalence between the spaces $\W^{\infty,2}(H^2, H^1)$ and $\mathcal W^{\infty,2}(H^2(\Gamma_0), H^1(\Gamma_0))$.

\vskip 2mm

\underline{(iii).} This follows from the previous result together with the classical embedding $$\mathcal W^{\infty,2}(H^2(\Gamma_0), H^1(\Gamma_0))\hookrightarrow C^0( [0,T]; H^{3/2}(\Gamma_0) ).$$ Indeed, if $u\in W^{\infty,2}(H^2, H^1)$, then by the evolving space equivalence in (ii) we obtain 
\begin{align*}
\phi_{-(\cdot)} u \in \mathcal W^{\infty,2}(H^2(\Gamma_0), H^1(\Gamma_0)) \hookrightarrow C^0( [0,T]; H^{3/2}(\Gamma_0) ),
\end{align*}
and since $(H^{3/2}(\Gamma(t)), \phi_t)_t$ is compatible we deduce by definition $u  \in C^0_{H^{3/2}}$, which concludes the proof.
\end{proof}

\noindent
\textbf{Acknowledgements.}
{\color{black}The authors are grateful to the anonymous referee for the careful reading of the manuscript as well as for the valuable comments, particularly the second referee who pointed out the value of expanding on the modelling aspects. We would also like to thank Thomas Sales (University of Warwick) for detecting a mistake in one of our original proofs.} M. Grasselli and A. Poiatti have been partially funded by MIUR-PRIN Grant 2020F3NCPX "Mathematics for Industry 4.0 (Math4I4)". M. Grasselli and A. Poiatti are also members of Gruppo Nazionale per l'Analisi Matematica, la Probabilit\`{a} e le loro Applicazioni (GNAMPA), Istituto Nazionale di Alta Matematica (INdAM).

\bibliographystyle{abbrv}
\bibliography{main}
\end{document}